\theoremstyle{plain}
\newtheorem{thm}{Theorem}[section]
\newtheorem{lem}[thm]{Lemma}
\newtheorem{prop}[thm]{Proposition}
\newtheorem{cor}[thm]{Corollary}
\newtheorem{conj}[thm]{Conjecture}
\theoremstyle{definition}
\newtheorem{dfn}[thm]{Definition}
\newtheorem{remark}[thm]{Remark}
\newtheorem{example}[thm]{Example}
\newcommand{\Cocone}{\mathrm{Cocone}}
\newcommand{\Cone}{\mathrm{Cone}}
\newcommand{\Exp}{\mathrm{Exp}}
\newcommand{\Ext}{\mathrm{Ext}}
\newcommand{\Hom}{\mathrm{Hom}}
\newcommand{\Mod}{\mathrm{Mod}}
\newcommand{\Sh}{\mathrm{Sh}}
\begin{document}

\title{Guillermou--Kashiwara--Schapira kernels of geodesic flows}
\author{Takumi Arai
\footnote{
arai.takumi.74e(at)st.kyoto-u.ac.jp
}
}
\date{}
\maketitle

\begin{abstract}
Guillermou--Kashiwara--Schapira proved that there exists a unique sheaf
quantization of any homogeneous Hamiltonian isotopy on a cotangent
bundle.
In this paper, we explicitly construct sheaf quantizations of geodesic flows on spheres and complex projective spaces.
\end{abstract}

\section{Introduction}

In microlocal sheaf theory, there is the notion of a microsupport of a sheaf. The microsupport of a sheaf is a closed conic subset of a cotangent bundle. Moreover, it is a Lagrangian submanifold of a cotangent bundle under some conditions. There is a relation between symplectic geometry and microlocal sheaf theory.

Let $M$ be a differentiable manifold, $\dot{T}^*M$ be its cotangent bundle with the zero section removed, and $I$ be an open interval of $\mathbb{R}$ containing $0$ parametrized by time $t$. 
Let $\mathbb{K}$ be a commutative unital ring and $\Sh(M)$ be the dg derived category of sheaves of $\mathbb{K}$-modules on $M$.
Let $\mathbb{K}_Z$ be the constant sheaf supported on a subset $Z$.
Tamarkin introduced a sheaf quantization of a Hamiltonian isotopy in \cite{Tam}. Guillermou--Kashiwara--Schapira \cite{GKS} proved the unique global existence of a sheaf quantization of a homogeneous Hamiltonian isotopy.

\begin{thm}[\cite{GKS}]
Let $\phi \colon  \dot{T}^*M \times I \to \dot{T}^*M$ be a homogeneous Hamiltonian isotopy. Fix $t \in I$ and set $\phi_t :  =\phi(-,t)$. Then there exists an equivalence of categories $\Phi_t \colon  \Sh(M) \to \Sh(M)$ satisfying $\phi_t({ \mathrm{SS} (F) }\setminus 0_M ) = \mathrm{SS} (\Phi_t(F)) \setminus 0_M $ for any $F \in \Sh(M)$.
\end{thm}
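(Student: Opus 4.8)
The plan is to realize $\Phi_t$ as convolution with a kernel. Concretely, I will construct a sheaf quantization $K\in\Sh(M\times M\times I)$ of $\phi$ in the sense of the Introduction, i.e.\ $K|_{\{t=0\}}\cong\mathbb{K}_{\Delta_M}$ and $\mathrm{SS}(K)\subset\Lambda_\phi\cup 0_{M\times M\times I}$, where $\Lambda_\phi$ is the conic Lagrangian lifting $\bigcup_{t}(\mathrm{Graph}(\phi_t)\times\{t\})$; then set $\Phi_t(F):=K_t\circ F$, where $K_t:=K|_{M\times M\times\{t\}}$ and $\circ$ denotes composition over the middle copy of $M$. Granting such a $K$, the three assertions to check --- that $\Phi_t$ is an equivalence, that it has the stated effect on microsupports, and (a posteriori) that $K$ is unique --- all follow from the microlocal calculus of kernels.

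\emph{From the kernel to the statement.} The microsupport estimate for a composition of kernels gives, off the zero section, $\mathrm{SS}(\Phi_t(F))\subset\mathrm{SS}(K_t)\circ\mathrm{SS}(F)=\phi_t(\mathrm{SS}(F)\setminus 0_M)$, since $\mathrm{SS}(K_t)\setminus 0_M$ is the graph of $\phi_t$. Let $\widetilde K$ be the transpose-dual kernel (swap the two copies of $M$, dualize over $M$, suitably normalized); computing its microsupport and its fibre at $t=0$ shows that $\widetilde K$ is a sheaf quantization of $\phi^{-1}$. By the composition theorem for kernels --- whose properness hypotheses hold because $\phi$ is homogeneous and stays off the zero section --- both $\widetilde K\circ K$ and $K\circ\widetilde K$ are sheaf quantizations of the identity isotopy. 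But the identity isotopy has $\Lambda_{\mathrm{id}}=T^*_{\Delta_M}(M\times M)\times 0_{T^*I}$, which lies in $\{\tau=0\}$, so any sheaf quantization $G$ of it has $\mathrm{SS}(G)\subset T^*(M\times M)\times 0_{T^*I}$ and hence, as $I$ is an interval, is the pullback of $G|_{\{t=0\}}$, i.e.\ equals $\mathbb{K}_{\Delta_M}\boxtimes\mathbb{K}_I$. Therefore $K_t\circ(-)$ and $\widetilde K_t\circ(-)$ are mutually quasi-inverse, so $\Phi_t$ is an equivalence; feeding the quasi-inverse into the same microsupport estimate yields the reverse inclusion $\phi_t(\mathrm{SS}(F)\setminus 0_M)\subset\mathrm{SS}(\Phi_t(F))\setminus 0_M$, hence equality. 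The same argument proves uniqueness: if $K'$ is another sheaf quantization of $\phi$, then $K'\circ\widetilde K$ quantizes the identity, so $K'\circ\widetilde K\cong\mathbb{K}_{\Delta_M}\boxtimes\mathbb{K}_I$, whence $K'\cong K'\circ\widetilde K\circ K\cong K$.

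\emph{Existence of the kernel.} This is the substantive part. A sheaf quantization on a sub-interval $[a,b]\subset I$ of the reparametrized isotopy $t\mapsto\phi_t\circ\phi_a^{-1}$, which is the identity at $t=a$, composed over $M$ with the slice at $t=a$ of a sheaf quantization already constructed up to time $a$, yields a sheaf quantization up to time $b$; and by the uniqueness just established, local pieces built this way agree on overlaps and therefore glue. So it suffices to construct $K$ near each $t_0\in I$, i.e.\ to treat a homogeneous Hamiltonian isotopy that is $C^1$-close to the identity on a short time interval. In that regime $\Lambda_\phi$ stays in a neighbourhood of $\Lambda_{\mathrm{id}}$ and admits a generating function: a function $S$ on $M\times M\times I\times(\mathbb{R}^N\setminus 0)$, homogeneous of degree one in the $\mathbb{R}^N$-variable, whose fibre-critical locus $\{d_u S=0\}$ is carried onto $\Lambda_\phi$ by $(z,u)\mapsto(z,d_z S)$. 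One then takes for $K$ the sheaf attached to this generating family by the standard recipe (for instance $Rq_!\,\mathbb{K}_{\{S\ge 0\}}$, with $q$ forgetting the $\mathbb{R}^N$-factor, after the usual modification ensuring $q$ is proper on supports): the generating-function formula for the microsupport of a proper pushforward identifies $\mathrm{SS}(K)$ with $\Lambda_\phi\cup 0$, while at $t=a$ the generating function degenerates to a non-degenerate fibrewise quadratic form together with the defining data of the diagonal, which identifies $K|_{\{t=a\}}$ with $\mathbb{K}_{\Delta_M}$.

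\emph{Main obstacle.} The formal half --- passing from the kernel to the equivalence and the microsupport identity, and deducing uniqueness --- is routine microlocal bookkeeping. The hard part is the existence of $K$: producing the generating function for a homogeneous Hamiltonian isotopy near the identity, proving that the associated sheaf has \emph{exactly} $\Lambda_\phi$ as its microsupport (with no spurious components coming from the singular locus of $S$), checking the normalization at the base time, and carrying out the gluing of the local kernels compatibly on all overlaps. This is the technical heart of \cite{GKS}.
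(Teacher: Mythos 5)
The paper itself quotes this theorem from \cite{GKS} without reproving it, so your sketch has to be measured against the argument of \cite{GKS}. Your skeleton --- build a kernel $K\in\Sh(M\times M\times I)$ with $K|_{\{t=0\}}\cong\mathbb{K}_{\Delta_M}$ and $\mathrm{SS}(K)\setminus 0=\Lambda_\phi$, derive the equivalence and the microsupport identity from the calculus of composition of kernels (with the transpose-dual kernel furnishing the quasi-inverse), deduce uniqueness because any quantization of the identity isotopy is forced to be $\mathbb{K}_{\Delta_M}\boxtimes\mathbb{K}_I$, and glue local-in-time kernels using that uniqueness --- is precisely the architecture of \cite{GKS}, and those formal deductions are all correct. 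Where you genuinely deviate is the local existence step. You propose a degree-one homogeneous generating family $S$ on $M\times M\times I\times(\mathbb{R}^N\setminus 0)$ and take $Rq_!\,\mathbb{K}_{\{S\ge 0\}}$. That is a legitimate construction in the spirit of Laudenbach--Sikorav--Viterbo and of Guillermou's later work, but it is not how \cite{GKS} argue: they deliberately stay inside the microlocal sheaf theory of Kashiwara--Schapira and avoid invoking an existence theorem for homogeneous generating families, which is itself a nontrivial input they would have to supply. If you do go your way, two points need more care than the sketch gives them: even in the model case $S=\langle x-y,\xi\rangle$ with $\xi\in\mathbb{R}^n\setminus 0$ one has $Rq_!\,\mathbb{K}_{\{S\ge 0\}}\not\cong\mathbb{K}_{\Delta_M}$ --- the stalk over the diagonal is $H^*_c(\mathbb{R}^n\setminus 0)$, which sits in two degrees --- so the \emph{usual modification} you gesture at is doing more than ensuring properness and must also kill the spurious degree; and the microsupport bound for $Rq_!$ is only an inclusion, so ruling out extra microsupport from the singular set of $S$ is a genuine nondegeneracy argument, which you rightly flag as the heart of the matter. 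Same skeleton as \cite{GKS}, then, but a materially different technique exactly where the work is.
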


More strongly,

\begin{thm}[\cite{GKS}]
Let $\phi \colon  \dot{T}^*M \times I \to \dot{T}^*M $ be a homogeneous Hamiltonian isotopy. Then there exists an object $K_\phi \in \Sh(M \times M \times I)$ satisfying $ \mathrm{SS} (K_\phi) \setminus 0_{M \times M \times I} = \Lambda_\phi $ and $K_\phi |_{\{t=0\}} \cong \mathbb{K}_{\Delta_M}$, where $\Lambda_\phi \subset T^*(M \times M \times I)$ is the Lagrangian submanifold defined by $\phi$. 
Moreover, any such object $K_\phi$ is unique up to unique isomorphism in the homotopy category.
\end{thm}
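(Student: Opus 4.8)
I would prove the two assertions of the statement — existence and uniqueness — essentially independently: uniqueness by a rigidity argument propagating in the $t$-variable, existence by gluing explicit local-in-time quantizations. I would establish uniqueness first, since the gluing in the existence argument uses it.

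\emph{Uniqueness.} Let $K$ and $K'$ both satisfy the conclusion, and put $F := R\mathcal{H}om(K, K') \in \Sh(M \times M \times I)$. The key point is that $F$ is non-characteristic in the $t$-direction, i.e.\ that $\mathrm{SS}(F) \cap (0_{M \times M} \times (T^*I \setminus 0_I)) \subseteq 0_{M \times M \times I}$. This follows from the standard bound on the microsupport of an internal hom ($\mathrm{SS}(F)$ is contained in the microlocal sum of $\mathrm{SS}(K')$ and the antipode $\mathrm{SS}(K)^a$) together with the geometry of $\Lambda_\phi$: since $\mathrm{SS}(K) \setminus 0_{M \times M \times I} = \mathrm{SS}(K') \setminus 0_{M \times M \times I} = \Lambda_\phi$, and since a point of $\Lambda_\phi$ is determined by its component in the first $T^*M$-factor together with its base point in $I$ (its $T^*I$-fibre component moreover tending to zero as that covector tends to the zero section, by homogeneity of the generating Hamiltonian), a short computation shows the relevant sum can meet $0_{M \times M} \times (T^*I \setminus 0_I)$ only inside the zero section. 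Granting this, the non-characteristic deformation lemma applied to $t \colon M \times M \times I \to I$ shows that the restriction map $R\Gamma(M \times M \times I; F) \to R\Gamma(M \times M \times \{0\}; F|_{t=0})$ is an isomorphism, and since $\Lambda_\phi$ is also non-characteristic for $\{t=0\}$ the target is $R\Gamma(M \times M; R\mathcal{H}om(\mathbb{K}_{\Delta_M}, \mathbb{K}_{\Delta_M})) \cong R\Gamma(M; \mathbb{K}_M)$. Thus $\mathrm{Hom}(K, K')$ in the homotopy category is canonically identified with $\mathrm{Hom}(\mathbb{K}_{\Delta_M}, \mathbb{K}_{\Delta_M})$; the identity of $\mathbb{K}_{\Delta_M}$ lifts to a unique $u \colon K \to K'$, and applying the same analysis to $\Cone(u)$ — whose microsupport still lies in $\Lambda_\phi$ off the zero section and which vanishes at $t=0$ — the same rigidity forces $\Cone(u) = 0$. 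Hence $u$ is the unique isomorphism restricting to the identity at $t=0$.

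\emph{Existence.} I would first record that $\phi$ is the flow of a time-dependent Hamiltonian $H_t$ on $T^*M \setminus 0_M$, homogeneous of degree $1$, obtained by contracting the canonical $1$-form with the generating vector field; this fixes the $T^*I$-component of $\Lambda_\phi$. The construction then has two steps. First, \emph{local existence near the identity}: fix $t_0 \in I$ and set $\psi_t := \phi_t \circ \phi_{t_0}^{-1}$, so that $\psi_{t_0} = \mathrm{id}$ and $\psi_t$ is $C^1$-close to the identity for $t$ in a small interval $J \ni t_0$; such a $\psi$ admits a non-degenerate generating function $S(x, x', t, u)$, homogeneous in the auxiliary fibre variable $u$, with associated conic Lagrangian $\Lambda_\psi$, and a Fourier-integral-operator-type sheaf built from $S$ (as in \cite{Tam} and \cite{GKS}) yields $K^{\mathrm{loc}}_\psi \in \Sh(M \times M \times J)$ with $\mathrm{SS}(K^{\mathrm{loc}}_\psi) \setminus 0_{M \times M \times J} = \Lambda_\psi$ and $K^{\mathrm{loc}}_\psi|_{t=t_0} \cong \mathbb{K}_{\Delta_M}$. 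Second, \emph{assembly}: working outward from $t=0$, if $K_\phi$ has been constructed over $M \times M \times [0, t_0+\varepsilon)$, then its restriction to $\{t=t_0\}$ automatically quantizes the single symplectomorphism $\phi_{t_0}$, and composing $K^{\mathrm{loc}}_\psi$ with it in the $M$-variables produces a quantization of $\phi$ over $J$ agreeing with the piece already built at $t=t_0$; by uniqueness the two are canonically isomorphic on their overlap. Subdividing $I$ into finitely many such intervals and using that the dg derived category of sheaves satisfies descent along open covers, one glues the pieces into $K_\phi \in \Sh(M \times M \times I)$; that $\mathrm{SS}(K_\phi) \setminus 0_{M \times M \times I} = \Lambda_\phi$ and $K_\phi|_{t=0} \cong \mathbb{K}_{\Delta_M}$ then follow from the local pieces, both conditions being local in $t$.

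The microsupport bookkeeping in the rigidity step and the descent in the assembly step are routine; I expect the main obstacle to be the first step of the existence argument — producing $K^{\mathrm{loc}}_\psi$ and verifying that its microsupport is \emph{exactly} $\Lambda_\psi$, with no spurious zero-section or boundary contributions. This is where non-degeneracy of the generating function is essential, and where the non-compactness of $M$ must be dealt with (by a further localization in the base variable, glued back via microlocal uniqueness). The same local construction, propagating unchanged in $t$, is also what underlies the category-equivalence statement quoted above.
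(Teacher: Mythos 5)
This theorem is quoted from \cite{GKS} and is not reproved in the paper (it appears as Theorem 1.2 and again as Theorem 2.14 with only a citation), so there is no in-paper proof to compare against. Your sketch does faithfully reproduce the strategy of the original argument in \cite{GKS}: uniqueness by showing $R\mathcal{H}om(K,K')$ is non-characteristic for the projection to $I$ and propagating from $t=0$, existence by generating functions for isotopies $C^1$-close to the identity, assembled over a cover of $I$ using the uniqueness already proved. Two caveats worth flagging so the sketch isn't mistaken for a complete argument: the microsupport computation you call routine (that $\mathrm{SS}(K)^a\mathbin{\hat+}\mathrm{SS}(K')$ misses $0_{M\times M}\times\dot T^*I$) and the verification that the generating-function sheaf has \emph{exactly} $\Lambda_\psi$ as its reduced microsupport are precisely the nontrivial technical content of \cite{GKS}; and since $q\colon M\times M\times I\to I$ is not proper when $M$ is non-compact, the propagation step cannot simply invoke a microsupport bound for $Rq_*$ — \cite{GKS} handle this by a more careful local-system argument on $I$, which your appeal to the non-characteristic deformation lemma elides.
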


We call the functor $\Phi_t$ the GKS functor and the object $K_\phi$ the GKS kernel.
The GKS kernel $K_\phi$ is important because it has many applications, including displaceability problems \cite{Tam}, diagonal thickenings \cite{PS}, sheaf-theoretical wrappings \cite{Kuo}, Legendrian isotopies \cite{Zhou}, and more.
One way to construct $K_\phi$ is to use the sheaf composition \cite{Zhang}.
One can explicitly construct $K_\phi$ without sheaf composition in the case where $\phi$ is the geodesic flow on $(\mathbb{R}^n,g_{std})$.

\begin{prop}[\cite{GKS}]
\label{prop1.3}
Let $\phi \colon  \dot{T}^*\mathbb{R}^n \times \mathbb{R} \to \dot{T}^*\mathbb{R}^n$ be the geodesic flow induced by the standard metric. Then there exists a morphism in $\mathrm{Sh}(\mathbb{R}^n \times \mathbb{R}^n \times \mathbb{R})$
\begin{equation*}
\mathbb{K}_{\{(x,y,t) \mid |x-y|\leq-t\}}[-1] \xrightarrow{} \mathbb{K}_{\{ (x,y,t) \mid |x-y|<t\}}[n]
\end{equation*}
such that its mapping cone is the GKS kernel $K_\phi$.
\end{prop}

In this paper, we explicitly construct $K_\phi$ in the case where $\phi$ is the geodesic flow on $(S^n,g_{std})$ or $(\mathbb{CP}^n,g_{FS})$. 
Our main theorems are as follows.

\begin{thm}
\label{thm1.4}
Let $\phi \colon  \dot{T}^*S^n \times \mathbb{R} \to \dot{T}^*S^n$ be the geodesic flow induced by the standard metric. Then there exists a sequence in $\Sh(S^n \times S^n \times \mathbb{R})$
\begin{equation*}
\cdots \xrightarrow{\psi_{-3}} \mathbb{K}_{Z_{-2}}[-2n] \xrightarrow{\psi_{-2}} \mathbb{K}_{Z_{-1}}[-n] \xrightarrow{\psi_{-1}} \mathbb{K}_{Z_0}[0] \xrightarrow{\psi_0} \mathbb{K}_{Z_1}[n+1] \xrightarrow{\psi_1} \mathbb{K}_{Z_2}[2n+1] \xrightarrow{\psi_2} \cdots 
\end{equation*}
such that its iterated mapping cone is the GKS kernel $K_\phi$.
\end{thm}

\begin{thm}
\label{thm1.5}
Let $\phi \colon  \dot{T}^*\mathbb{CP}^n \times \mathbb{R} \to \dot{T}^*\mathbb{CP}^n$ be the geodesic flow induced by the Fubini--Study metric. Then there exists a sequence in $\mathrm{Sh}(\mathbb{CP}^n \times \mathbb{CP}^n \times \mathbb{R})$
\begin{equation*}
\cdots \xrightarrow{\psi_{-3}} \mathbb{K}_{Z_{-2}}[-2n-2] \xrightarrow{\psi_{-2}} \mathbb{K}_{Z_{-1}}[-2] \xrightarrow{\psi_{-1}} \mathbb{K}_{Z_0}[0] \xrightarrow{\psi_0} \mathbb{K}_{Z_1}[2n+1] \xrightarrow{\psi_1} \mathbb{K}_{Z_2}[2n+3] \xrightarrow{\psi_2} \cdots 
\end{equation*}
such that its iterated mapping cone is the GKS kernel $K_\phi$.
\end{thm}

The explicit definitions of the locally closed subsets $Z_i$ and the morphisms $\psi_i$ will be given in Section 3 and Section 4. The outline of the proof of the main theorems is as follows.

\begin{enumerate}
\item Find a locally closed subset $Z_i \subset M \times M \times \mathbb{R}$ such that the sheaf $\mathbb{K}_{Z_i} \in \Sh(M\times M\times \mathbb{R}) $ satisfies the microsupport conditions except at the singular subset of $Z_i$.
\item Find a morphism $\psi_i \in \Hom_{\Sh(M \times M \times \mathbb{R})} (\mathbb{K}_{Z_i} , \mathbb{K}_{Z_{i+1}} ) $ such that $\psi_i$ cancels the extra microsupport at the singular subset of $Z_i$ and $Z_{i+1}$.
\end{enumerate}

As an application of Theorem \ref{thm1.5}, we solve a conjecture posed in \cite{KuwagakiSaito} in the case of $M=\mathbb{CP}^n$. The conjecture states that for any $\mathbb{C}$-constructible sheaf $F$ on a compact complex manifold $M$ there exists an inductive system $\{F_i\}_{i \in I}$ which consists of $\mathbb{C}$-constructible sheaves on $M$ such that $\mathrm{hocolim}_{i \in I} F_i \cong \mathrm{hocolim}_{t \in \mathbb{R}} \Phi_t(F)$.

In Section 2, we review symplectic geometry and microlocal sheaf theory.
In Section 3, we explicitly construct the GKS kernel of the geodesic flow on $S^n$.
In Section 4, we explicitly construct the GKS kernel of the geodesic flow on $\mathbb{CP}^n$.

\subsection*{Acknowledgments}
I would like to thank my supervisor Tatsuki Kuwagaki for many discussions and advice. I also would like to thank Daiki Irikura for having many discussions and answering my questions on topology. This work was supported by JST SPRING, Grant Number JPMJSP2110.

\section{Preliminaries}

\subsection{Symplectic Geometry}

Let $M$ be a differentiable manifold, $0_M$ be the zero section of the tangent bundle $TM$ and the cotangent bundle $T^*M$ of $M$, and $I$ be an open interval of $\mathbb{R}$ containing $0$ parametrized by time $t$. Set $\dot{T}M :  =TM \setminus 0_M$ and $\dot{T}^*M :  =T^*M\setminus 0_M$. The cotangent bundle $T^*M$ has the canonical exact symplectic structure given by $\alpha = \sum_{i} \xi_i dx_i$ and $\omega = \sum_{i} dx_i \wedge d\xi_i $. Let $(X,\omega)$ be a symplectic manifold. A Hamiltonian function $ H\colon X \to \mathbb{R} $ induces a Hamiltonian vector field $v \in \mathfrak{X}(X)$ by the equation $\omega(v,-)=dH$. A Hamiltonian vector field $v \in \mathfrak{X}(X) $ induces a Hamiltonian isotopy $\phi \colon  X \times I \to X $ by integrating $v$.

\begin{dfn}
The \emph{geodesic flow} on a complete Riemannian manifold $(M,g)$ is an isotopy $\phi \colon  \dot{T}M \times \mathbb{R} \to \dot{T}M $, $\phi_t(x,v) = d_{ \frac{v}{|v|} } {\Exp_{x,t}}( v )$ where $ \Exp_{x,t}\colon  T_xM \to M$ is the exponential map by time $t$ and $ d_{\frac{v}{|v|}}{\Exp_{x,t}} \colon  T_{\frac{v}{|v|}}(T_xM) \to T_{\Exp_{x,t}(\frac{v}{|v|})}M$ is its derivative.
\end{dfn}

\begin{example}
If we consider the Hamiltonian function $H\colon \dot{T}^*\mathbb{R}^n  \to \mathbb{R}$, $H(x,\xi)=|\xi|$, then the induced Hamiltonian isotopy is given by $\phi \colon  \dot{T}^*\mathbb{R}^n \times \mathbb{R} \to \dot{T}^*\mathbb{R}^n $, $\phi_t(x,\xi)=(x+t\frac{\xi}{|\xi|} , \xi)$. In general, if we consider a complete Riemannian manifold $(M,g)$ and the Hamiltonian function $H\colon  \dot{T}^*M \to \mathbb{R}$, $H(x,\xi)=|\xi|_{g^*}$, then the induced Hamiltonian isotopy is given by the geodesic flow on $(M,g)$ where $g^*$ is the Riemannian metric on $T^*M$ induced by $g$ and we identify $TM$ and $T^*M$ by $g$.
\end{example}

\begin{remark}
Consider a complete Riemannian manifold $(M,g)$.
The Hamiltonian function $H\colon \dot{T}^*M \to \mathbb{R}$, $H(x,\xi)= { |\xi|_{g^*} }$ induces the Hamiltonian isotopy $\phi \colon  \dot{T}^*M \times \mathbb{R} \to \dot{T}^*M$, $\phi_t(x,v) = d_{\frac{v}{|v|}} {\Exp_{x,t}}( v )$. On the other hand, the Hamiltonian function $H\colon \dot{T}^*M \to \mathbb{R}$, $H(x,\xi)= \frac{1}{2}|\xi|_{g^*}^2$ induces the Hamiltonian isotopy $\phi \colon  \dot{T}^*M \times \mathbb{R} \to \dot{T}^*M$, $\phi_t(x,v) = d_{v} {\Exp_{x,t}}( v )$. Note that we use the identification of $TM$ and $T^*M$ by $g$.
The former $\phi$ is referred to as the normalized geodesic flow and the latter $\phi$ is referred to as the geodesic flow. In this paper, we consider only the former and simply call it the geodesic flow. There are no essential differences between these flows because their restrictions to the unit sphere bundle coincide.
\end{remark}

\begin{dfn}
A Hamiltonian isotopy $\phi \colon  \dot{T}^*M \times I \to \dot{T}^*M $ is \emph{homogeneous} if $\phi_t \colon  \dot{T}^*M \to \dot{T}^*M$ commutes with the canonical $\mathbb{R}_{>0}$-action for all $t \in I$.
\end{dfn}

\subsection{Sheaf Theory}

Let $\mathbb{K}$ be a commutative unital ring and $\Mod(\mathbb{K})$ be the dg derived category of $\mathbb{K}$-modules. Let $X$ be a topological space and $\Sh(X)$ be the dg derived category of sheaves of $\mathbb{K}$-modules on $X$. We will use the term sheaf for an object of $\Sh(X)$.
There are six dg derived functors: $f_*$, $f^{-1}$, $f_!$, $f^!$, $ \otimes $, and $\mathcal{H}om$, where $f$ is a continuous map between locally compact Hausdorff spaces. For a sheaf $F\in \Sh(X)$ and a locally closed subset $ Z \subset X $, we set $ F_Z : = F \otimes \mathbb{K}_Z $ and $\Gamma_Z(F): = \mathcal{H}om (\mathbb{K}_Z, F) $. For sheaves $F,G \in \Sh(X)$, we set $\Ext^i(F,G): =H^i(\Hom_{\Sh(X)}(F,G))$. The notation basically follows \cite{KS}.

\begin{prop}[{\cite[p.115]{KS}}]
\label{prop2.5}
Let $X$ be a locally compact Hausdorff space, $Z_1$ be an open subset of $X$, and set $Z_2 :  = X \setminus Z_1$. Let $F$ be a sheaf on $X$. Then there exist distinguished triangles in $\Sh(X)$,
\begin{equation*}
F_{Z_1} \to F \to F_{Z_2} \xrightarrow{+1} 
\end{equation*}
and
\begin{equation*}
\Gamma_{Z_2}(F) \to F \to \Gamma_{Z_1}(F) \xrightarrow{+1}.
\end{equation*}
\end{prop}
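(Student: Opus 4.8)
The plan is to deduce both distinguished triangles from a single short exact sequence of sheaves on $X$ attached to the decomposition $X = Z_1 \sqcup Z_2$ into an open subset and its closed complement, and then to apply the two derived bifunctors $\otimes$ and $\mathcal{H}om$ to it. First I would set up that sequence. Writing $j : Z_1 \hookrightarrow X$ for the open inclusion and $i : Z_2 \hookrightarrow X$ for the closed inclusion, there is a canonical morphism $\mathbb{K}_{Z_1} = j_! j^{-1} \mathbb{K}_X \to \mathbb{K}_X$ coming from the $(j_!, j^{-1})$-adjunction and a canonical morphism $\mathbb{K}_X \to i_* i^{-1} \mathbb{K}_X = \mathbb{K}_{Z_2}$ coming from the $(i^{-1}, i_*)$-adjunction. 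I would then check that
\[
0 \to \mathbb{K}_{Z_1} \to \mathbb{K}_X \to \mathbb{K}_{Z_2} \to 0
\]
is an exact sequence of sheaves concentrated in degree $0$ by computing stalks: at $x \in Z_1$ it reads $0 \to \mathbb{K} \xrightarrow{\mathrm{id}} \mathbb{K} \to 0 \to 0$, and at $x \in Z_2$ it reads $0 \to 0 \to \mathbb{K} \xrightarrow{\mathrm{id}} \mathbb{K} \to 0$, so exactness holds at every point. Viewed as an object of $\Sh(X)$, this short exact sequence yields a distinguished triangle $\mathbb{K}_{Z_1} \to \mathbb{K}_X \to \mathbb{K}_{Z_2} \xrightarrow{+1}$.

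To obtain the first triangle I would apply the triangulated functor $F \otimes (-)$, the derived tensor product (which is what $\otimes$ denotes here), to this distinguished triangle; since $F \otimes \mathbb{K}_X \cong F$ and $F_{Z_k} = F \otimes \mathbb{K}_{Z_k}$ by definition, this gives $F_{Z_1} \to F \to F_{Z_2} \xrightarrow{+1}$. To obtain the second triangle I would instead apply the contravariant triangulated functor $\mathcal{H}om(-, F)$ to the same distinguished triangle; a contravariant triangulated functor reverses triangles, so this produces $\mathcal{H}om(\mathbb{K}_{Z_2}, F) \to \mathcal{H}om(\mathbb{K}_X, F) \to \mathcal{H}om(\mathbb{K}_{Z_1}, F) \xrightarrow{+1}$, and since $\mathcal{H}om(\mathbb{K}_X, F) \cong F$ and $\Gamma_{Z_k}(F) = \mathcal{H}om(\mathbb{K}_{Z_k}, F)$ by definition, this is exactly $\Gamma_{Z_2}(F) \to F \to \Gamma_{Z_1}(F) \xrightarrow{+1}$.

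I do not expect a genuine obstacle; essentially all the content is in the stalk computation of the first step, where the standing hypothesis that $X$ is locally compact Hausdorff is used to ensure the six-functor formalism and the identifications $\mathbb{K}_{Z_1} = j_!\mathbb{K}_{Z_1}$ and $\mathbb{K}_{Z_2} = i_*\mathbb{K}_{Z_2}$. The remaining ingredients — that a short exact sequence of complexes (here of sheaves in degree $0$) determines a canonical distinguished triangle, and that $F \otimes (-)$ and $\mathcal{H}om(-, F)$ are exact functors, the latter contravariant — are standard. The one point worth checking is that the morphisms appearing in the two output triangles are the natural ones, and this is automatic: each of them is the image under the relevant functor of one of the two canonical morphisms $\mathbb{K}_{Z_1} \to \mathbb{K}_X$ or $\mathbb{K}_X \to \mathbb{K}_{Z_2}$.
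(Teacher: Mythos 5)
Your proof is correct, and it is the standard argument: the paper itself offers no proof of this proposition (it simply cites \cite[p.115]{KS}), and the derivation there is exactly yours, namely applying $F\otimes(-)$ and $\mathcal{H}om(-,F)$ to the short exact sequence $0\to\mathbb{K}_{Z_1}\to\mathbb{K}_X\to\mathbb{K}_{Z_2}\to 0$ verified on stalks. The only point worth adding is that $\mathbb{K}_{Z_1}$ and $\mathbb{K}_{Z_2}$ have flat stalks, so the derived tensor product agrees with the underived one and the identification $F\otimes\mathbb{K}_{Z_k}\cong F_{Z_k}$ is unambiguous.
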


\begin{prop}
\label{prop2.6}
Let $X$ be a manifold, $Z_1$ be an open subset of $X$, and $Z_2$ be a closed subset of $X$. Then the object $\Hom_{\Sh(X)}(\mathbb{K}_{Z_1}, \mathbb{K}_{Z_2}) $ is isomorphic to $ C^*(Z_1 \cap Z_2)$ in $\Mod(\mathbb{K})$ where $C^*(Z_1 \cap Z_2)$ is the singular cochain complex of $Z_1 \cap Z_2$ with coefficients in $\mathbb{K}$.
\end{prop}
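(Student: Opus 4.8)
The plan is to reduce the computation of $\Hom_{\Sh(X)}(\mathbb{K}_{Z_1},\mathbb{K}_{Z_2})$ to a statement about the global sections of a sheaf supported on $Z_1\cap Z_2$, and then identify that with singular cohomology. First I would use the adjunction and the standard identifications for constant sheaves on locally closed subsets. Since $Z_1$ is open, $\mathbb{K}_{Z_1}=j_!\mathbb{K}_{Z_1}$ for the open inclusion $j\colon Z_1\hookrightarrow X$, so $\mathcal{H}om(\mathbb{K}_{Z_1},\mathbb{K}_{Z_2})\cong j_*j^{-1}\mathbb{K}_{Z_2}=j_*\mathbb{K}_{Z_1\cap Z_2}$, where on the right $\mathbb{K}_{Z_1\cap Z_2}$ denotes the constant sheaf on the locally closed subset $Z_1\cap Z_2$ of $Z_1$ (which is closed in $Z_1$ because $Z_2$ is closed in $X$). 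Taking $R\Gamma(X,-)$ gives
\begin{equation*}
\Hom_{\Sh(X)}(\mathbb{K}_{Z_1},\mathbb{K}_{Z_2})\cong R\Gamma(X, j_*\mathbb{K}_{Z_1\cap Z_2})\cong R\Gamma(Z_1,\mathbb{K}_{Z_1\cap Z_2})\cong R\Gamma(Z_1\cap Z_2,\mathbb{K}),
\end{equation*}
where the last isomorphism uses that $i_*$ for the closed inclusion $i\colon Z_1\cap Z_2\hookrightarrow Z_1$ is exact and computes $R\Gamma$ correctly.

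Next I would identify $R\Gamma(Z_1\cap Z_2,\mathbb{K})$, the sheaf cohomology of the constant sheaf with coefficients in $\mathbb{K}$, with the singular cochain complex $C^*(Z_1\cap Z_2)$ in $\Mod(\mathbb{K})$. This is where the hypothesis that $Z_1\cap Z_2$ is locally contractible enters: on a locally contractible space the constant sheaf cohomology agrees with singular cohomology with coefficients in $\mathbb{K}$, and more precisely there is a natural quasi-isomorphism between $R\Gamma$ of the constant sheaf and the singular cochain complex. One clean way to see this is to use the sheaf of singular cochains: the presheaf $U\mapsto C^*(U;\mathbb{K})$ sheafifies to a complex of sheaves that is a soft (hence $\Gamma$-acyclic) resolution of $\mathbb{K}$ on a locally contractible space, because local contractibility makes the augmentation $\mathbb{K}\to C^*(-;\mathbb{K})$ a quasi-isomorphism on a cofinal system of neighborhoods of each point; then $R\Gamma(Z_1\cap Z_2,\mathbb{K})$ is computed by the global sections of this resolution, which receive a quasi-isomorphism from $C^*(Z_1\cap Z_2;\mathbb{K})$.

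I expect the main obstacle to be purely expository: namely, being careful about which "constant sheaf on a locally closed subset" convention is in force (extension by zero versus pushforward) and making sure the open/closed bookkeeping is consistent, so that the chain of isomorphisms above is actually valid rather than merely plausible. The homotopy-theoretic input — that sheaf cohomology of the constant sheaf equals singular cohomology on locally contractible spaces — is classical and can simply be cited; the content of the proof is assembling the adjunctions and base-change identities in the right order. I would therefore present the argument as the short sequence of natural isomorphisms above, with a remark pointing to the standard reference for the final comparison.
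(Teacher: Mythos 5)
Your argument is correct and follows essentially the same route as the paper: both proofs use the open-inclusion adjunction (in the paper via $j_!\dashv j^!=j^{-1}$ applied to $\Hom$, in your version via the $\mathcal{H}om$-sheaf and $R\Gamma$), pass through the closed embedding of $Z_1\cap Z_2$ into $Z_1$, and conclude by identifying $R\Gamma(Z_1\cap Z_2,\mathbb{K})$ with $C^*(Z_1\cap Z_2)$ using the singular-cochain resolution of the constant sheaf on a locally contractible space. The paper cites Voisin's Theorem 4.47 for this last comparison rather than sketching the soft-resolution argument, but the content is the same.
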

\begin{proof}
Let $j\colon Z_1 \to X$ be the open embedding and $i\colon Z_1 \cap Z_2 \to Z_1$ be the closed embedding. 
Then we have the following chain of isomorphisms,
\begin{equation*}
\begin{split}
\Hom_{\Sh(X)}(\mathbb{K}_{Z_1}, \mathbb{K}_{Z_2})
&\cong \Hom_{\Sh(X)}(j_! \mathbb{K}_{Z_1}, \mathbb{K}_{Z_2}) \\
&\cong \Hom_{\Sh(Z_1)}(\mathbb{K}_{Z_1}, j^! \mathbb{K}_{Z_2}) \\
&\cong \Hom_{\Sh(Z_1)}(\mathbb{K}_{Z_1}, \mathbb{K}_{Z_1 \cap Z_2}) \\
&\cong \Hom_{\Sh(Z_1)}(\mathbb{K}_{Z_1}, i_*  \mathbb{K}_{Z_1 \cap Z_2}) \\
&\cong \Hom_{\Sh(Z_1 \cap Z_2)}( i^{-1}\mathbb{K}_{Z_1},  \mathbb{K}_{Z_1 \cap Z_2}) \\
&\cong \Hom_{\Sh(Z_1 \cap Z_2)}(\mathbb{K}_{Z_1 \cap Z_2}, \mathbb{K}_{Z_1 \cap Z_2}) \\
&\cong \Gamma(Z_1 \cap Z_2 ; \mathbb{K}_{Z_1 \cap Z_2})\\
&\cong C^*(Z_1 \cap Z_2).
\end{split}
\end{equation*}
The last isomorphism follows from the fact that the singular cochain complex computes sheaf cohomology.
\end{proof}

\begin{prop}
\label{prop2.7}
Let $X$ be a topological space and $A$ be a subset of $X$. Then there exists a distinguished triangle $C^*(X,A) \to C^*(X) \to C^*(A) \xrightarrow{+1}$ in $\Mod(\mathbb{K})$ where $C^*(X,A)$ is the relative singular cochain complex of the pair $(X,A)$.
\begin{proof}
This is a special case of the fact that a short exact sequence induces a distinguished triangle.
\end{proof}
\end{prop}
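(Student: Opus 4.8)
\textbf{Proof proposal for Proposition \ref{prop2.7}.}

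The plan is to obtain the distinguished triangle directly from the short exact sequence of singular cochain complexes associated with the pair $(X,A)$. First I would recall the setup: for a topological space $X$ and a subset $A \subset X$, let $C_*(X)$ and $C_*(A)$ denote the singular chain complexes with $\mathbb{K}$-coefficients, and note that $C_*(A) \to C_*(X)$ is an inclusion of complexes which is degreewise split injective (a basis of $C_n(A)$, namely the singular $n$-simplices in $A$, extends to the basis of $C_n(X)$ given by all singular $n$-simplices in $X$). Dualizing, the restriction map $C^*(X) = \Hom_{\mathbb{K}}(C_*(X),\mathbb{K}) \to \Hom_{\mathbb{K}}(C_*(A),\mathbb{K}) = C^*(A)$ is then degreewise surjective, with kernel the relative cochain complex $C^*(X,A) := \Hom_{\mathbb{K}}(C_*(X)/C_*(A), \mathbb{K})$. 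This gives a short exact sequence of complexes of $\mathbb{K}$-modules
\begin{equation*}
0 \to C^*(X,A) \to C^*(X) \to C^*(A) \to 0.
\end{equation*}

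Next I would invoke the standard fact that any short exact sequence of complexes in an abelian category gives rise to a distinguished triangle in the derived category. Concretely, since the sequence is degreewise split (as just observed, $C^*(X) \to C^*(A)$ is a split surjection in each degree), the mapping cone of the inclusion $C^*(X,A) \hookrightarrow C^*(X)$ is quasi-isomorphic to $C^*(A)$ via the canonical comparison map; hence in $\Mod(\mathbb{K})$ we obtain the distinguished triangle
\begin{equation*}
C^*(X,A) \to C^*(X) \to C^*(A) \xrightarrow{+1}.
\end{equation*}
This is precisely the claimed triangle, and it is the content of the one-line proof in the excerpt.

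There is essentially no obstacle here; the only point requiring the smallest amount of care is the degreewise-splitness of the chain-level inclusion $C_*(A) \hookrightarrow C_*(X)$, which is what guarantees that the dual map is surjective (so that the relative cochain complex sits in a genuine short exact sequence rather than merely a filtration) and that the connecting morphism is the expected one. Since the paper works over a commutative unital ring $\mathbb{K}$ and uses free modules on sets of singular simplices, this splitting is immediate, and the rest is the general homological-algebra statement that short exact sequences of complexes become distinguished triangles — exactly as stated in the proof.
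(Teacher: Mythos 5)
Your proposal is correct and is exactly the paper's argument, merely spelled out: the degreewise-split inclusion $C_*(A)\hookrightarrow C_*(X)$ dualizes to a short exact sequence $0\to C^*(X,A)\to C^*(X)\to C^*(A)\to 0$, and the general fact that short exact sequences of complexes yield distinguished triangles gives the claim. No further comment is needed.
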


\begin{dfn}
Let $M$ be a differentiable manifold. For a sheaf $F \in \Sh(M) $, the \emph{microsupport} $\mathrm{SS}(F)$ is the closed subset of $T^*M$ defined by 
\begin{equation*}
\overline{\{ (x,\xi)\in T^*M \mid \exists f \in C^\infty (M) , f(x)=0, d_xf=\xi, (\Gamma_{\{x \in M \mid f(x) \geq 0\}}(F))_x \neq 0 \} } .
\end{equation*}
\end{dfn}

\begin{example}
Let $M$ be a differentiable manifold and $\phi\colon  M \to \mathbb{R}$ be a $C^\infty$-function such that $0 \in \mathbb{R}$ is a regular value of $\phi$. Set $U :  =\{x \in M \mid \phi(x) < 0 \}$. Then $ \mathrm{SS} (\mathbb{K}_U) = 0_U \cup \{ (x,\xi) \in T^*M \mid x \in \partial U, \xi=\lambda d_x\phi , \lambda \geq 0 \} $.
\end{example}

\begin{example}
Let $M$ be a differentiable manifold and $\phi\colon  M \to \mathbb{R}$ be a $C^\infty$-function such that $0 \in \mathbb{R}$ is a regular value of $\phi$. Set $Z :  =\{x \in M \mid \phi(x) \geq 0 \}$. Then $\mathrm{SS} (\mathbb{K}_Z) = 0_Z \cup \{ (x,\xi) \in T^*M \mid x \in \partial Z, \xi=\lambda d_x\phi , \lambda \geq 0 \} $.
\end{example}

\begin{prop}[{\cite[Proposition 5.1.3]{KS}}]
\label{prop2.11}
Let $M$ be a differentiable manifold. Let $F \to G \to H \xrightarrow{+1} $ be a distinguished triangle in $\Sh(M)$. Then $\mathrm{SS}(G) \subset \mathrm{SS}(F) \cup \mathrm{SS}(H)$.
\end{prop}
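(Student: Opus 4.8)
The plan is to argue directly from the definition of the microsupport, reducing the whole statement to the elementary fact that exact functors send distinguished triangles to distinguished triangles. For a sheaf $F \in \Sh(M)$ write
\[
N(F) := \{ (x,\xi)\in T^*M \mid \exists\, f \in C^\infty(M),\ f(x)=0,\ d_xf=\xi,\ (\Gamma_{\{f \geq 0\}}(F))_x \neq 0 \},
\]
so that by definition $\mathrm{SS}(F) = \overline{N(F)}$, and similarly for $G$ and $H$. Since closure commutes with finite unions, it is enough to prove the (not necessarily closed) inclusion $N(G) \subseteq N(F) \cup N(H)$; indeed this gives $\mathrm{SS}(G) = \overline{N(G)} \subseteq \overline{N(F) \cup N(H)} = \overline{N(F)} \cup \overline{N(H)} = \mathrm{SS}(F) \cup \mathrm{SS}(H)$.

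To establish $N(G) \subseteq N(F) \cup N(H)$, I would fix a point $(x,\xi) \notin N(F) \cup N(H)$ together with an arbitrary $f \in C^\infty(M)$ satisfying $f(x)=0$ and $d_xf = \xi$, and set $Z := \{ y \in M \mid f(y) \geq 0 \}$. Applying the functor $\Gamma_Z(-) = \mathcal{H}om(\mathbb{K}_Z, -)$, which is a triangulated dg functor on $\Sh(M)$, to the given distinguished triangle $F \to G \to H \xrightarrow{+1}$ yields a distinguished triangle $\Gamma_Z(F) \to \Gamma_Z(G) \to \Gamma_Z(H) \xrightarrow{+1}$ in $\Sh(M)$; then applying the exact stalk functor $(-)_x$ produces a distinguished triangle $(\Gamma_Z(F))_x \to (\Gamma_Z(G))_x \to (\Gamma_Z(H))_x \xrightarrow{+1}$ in $\Mod(\mathbb{K})$. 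By the choice of $(x,\xi)$ and $f$ the two outer terms vanish, so the long exact cohomology sequence of this triangle forces $(\Gamma_Z(G))_x \cong 0$. As $f$ was an arbitrary admissible function at $(x,\xi)$, this shows $(x,\xi) \notin N(G)$, which is exactly the claimed inclusion.

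The proof is short and I do not expect a genuine obstacle. The only points that require care are the two inputs that make the stalk computation legitimate: that $\mathcal{H}om(\mathbb{K}_Z, -)$ is indeed a triangulated (exact) functor on the derived/dg category $\Sh(M)$, hence carries the given triangle to a triangle, and that the stalk functor $(-)_x$ is exact. Both are standard in the setting in which we work, so once they are invoked the argument goes through verbatim. (This is, in effect, the proof of \cite[Proposition 5.1.3]{KS}.)
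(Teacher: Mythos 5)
Your argument is correct: the paper gives no proof of this proposition (it only cites \cite[Proposition 5.1.3]{KS}), and your proof is the standard one, obtained directly from the paper's definition of $\mathrm{SS}$ by applying the exact functors $\Gamma_{\{f\geq 0\}}(-)=\mathcal{H}om(\mathbb{K}_{\{f\geq 0\}},-)$ and $(-)_x$ to the triangle and using the identity $\overline{A\cup B}=\overline{A}\cup\overline{B}$ for finite unions. The quantifier handling (fixing $(x,\xi)$ outside $N(F)\cup N(H)$ and letting $f$ range over all admissible test functions) is exactly right, so there is nothing to add.
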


\begin{dfn}
For a homogeneous Hamiltonian isotopy $\phi \colon  \dot{T}^*M \times I \to \dot{T}^*M $, a \emph{GKS kernel} of $\phi$ is an object $K_\phi \in \Sh(M\times M \times I)$ satisfying the conditions,
\begin{equation*}
\mathrm{SS} (K_\phi) \setminus 0_{M \times M \times I} = \Lambda_\phi 
\end{equation*}
and
\begin{equation*}
K_\phi|_{ \{t=0\} } \cong \mathbb{K}_{\Delta_M}
\end{equation*}
where $\Lambda_\phi$ is the Lagrangian submanifold of $T^*(M \times M \times I)$ defined by
\begin{equation*}
\Lambda_\phi = \{ (\phi_t(x,\xi),(x,-\xi), (t, - \langle \alpha, v_\phi\rangle (\phi_t(x,\xi),t)  )  ) \in \dot{T}^*M \times \dot{T}^*M \times T^*I \mid (x,\xi)\in \dot{T}^*M , t \in I \},
\end{equation*}
$\alpha$ is the 1-form defined by $\alpha = \sum_i \xi_i dx_i$, and $v_\phi$ is the time-dependent vector field induced by $\phi$.
\end{dfn}

Note that if $H\colon  \dot{T}^*M \to \mathbb{R}$ generates a Hamiltonian isotopy $\phi\colon \dot{T}^*M \times I \to \dot{T}^*M$, then $H(x,\xi)= \langle \alpha, v_\phi \rangle(x,\xi,t)$. For example, the Hamiltonian function $H\colon \dot{T}^*\mathbb{R}^n \to \mathbb{R}$, $H(x,\xi)=|\xi|$ gives a Lagrangian submanifold $\Lambda_\phi =\{ ( (x+t\frac{\xi}{|\xi|},\xi),(x,-\xi), (t, -|\xi| )  )  \mid (x,\xi)\in \dot{T}^*\mathbb{R}^n , t \in \mathbb{R} \}$. In general, for a complete Riemannian manifold $(M,g)$, the Hamiltonian function $H\colon \dot{T}^*M \to \mathbb{R}$, $H(x,\xi)=|\xi|_{g^*}$ gives a Lagrangian submanifold $\Lambda_\phi =\{ ( \phi_t(x,\xi),(x,-\xi), (t, -|\xi|_{g^*} )  )  \mid (x,\xi)\in \dot{T}^*M , t \in \mathbb{R} \}$.

\begin{thm}[\cite{GKS}]
Let $\phi \colon  \dot{T}^*M \times I \to \dot{T}^*M $ be a homogeneous Hamiltonian isotopy.
Then a GKS kernel of $\phi$ exists and it is unique up to unique isomorphism in the homotopy category.
\end{thm}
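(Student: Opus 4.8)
The plan is to treat existence and uniqueness along the lines of \cite{GKS}, reducing the kernel problem to a Cauchy problem for the isotopy $\mathrm{id}_M \times \phi$ acting on sheaves on $M \times M$, and controlling everything through the microlocal calculus: the estimate $\mathrm{SS}(G) \subset \mathrm{SS}(F) \cup \mathrm{SS}(H)$ of Proposition~\ref{prop2.11} together with its refinements for $\otimes$, $\mathcal{H}om$ and direct and inverse images, the non-characteristic deformation lemma, and the microlocal cut-off lemma of \cite{KS}.

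For uniqueness, let $K,K'$ be two GKS kernels of $\phi$. First I would show that GKS kernels compose: from a kernel $L$ of $\psi$ and a kernel $L'$ of $\psi'$ one builds, by convolution over the middle factor, a kernel $L' \circ L$ of $\psi' \circ \psi$; the microsupport bound holds because the composition of the conic Lagrangians $\Lambda_\psi$ and $\Lambda_{\psi'}$ is clean (they lie over graphs of symplectomorphisms of $\dot{T}^*M$) and equals $\Lambda_{\psi' \circ \psi}$, once a properness statement in the $M$-direction is arranged. The constant isotopy has kernel $\mathbb{K}_{\Delta_M \times I}$, whose microsupport away from the zero section is $\Lambda_{\mathrm{id}}$, and this Lagrangian has vanishing $T^*I$-component since the Hamiltonian generating the identity vanishes. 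Granting the existence part for the inverse isotopy $\phi^{-1}$, the convolution $K \circ K_{\phi^{-1}}$ is then a kernel of the identity isotopy restricting to $\mathbb{K}_{\Delta_M}$ at $t=0$; since its microsupport has no $T^*I$-component it is locally constant along $I$, hence, $I$ being connected, isomorphic to $\mathbb{K}_{\Delta_M \times I}$, which forces $K \cong K'$. Running the same computation for $\Hom$ of a GKS kernel with itself shows this $\Hom$ is $\mathbb{K}$ concentrated in degree $0$, which upgrades the statement to uniqueness up to a unique isomorphism.

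For existence I would argue in two steps. Step one is a local construction in time: for $|t|$ small, $\phi_t$ is $C^0$-close to the identity, and working in a chart $M = \mathbb{R}^n$ one puts the homogeneous isotopy into a normal form in which the kernel is the constant sheaf $\mathbb{K}_Z$ on an explicit locally closed subset $Z \subset \mathbb{R}^n \times \mathbb{R}^n \times I$, whose microsupport is computed by hand exactly as in the examples of Section 2, homogeneity keeping $Z$ and $\mathrm{SS}(\mathbb{K}_Z)$ conic. Step two is gluing: cover $I$ by small intervals each carrying a local kernel and patch them, using the uniqueness statement over the overlaps to obtain canonical identifications and the degree-$0$ concentration of the relevant $\Hom$'s to see that the patching is coherent. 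Along the way one checks $\mathrm{SS}(K_\phi)\setminus 0_{M\times M\times I} = \Lambda_\phi$ — the inclusion $\subseteq$ from the construction and Proposition~\ref{prop2.11}, and the reverse inclusion because a kernel inducing an equivalence of categories cannot have microsupport strictly smaller than the graph Lagrangian, as one sees slice by slice in $t$ — and that $K_\phi|_{t=0} \cong \mathbb{K}_{\Delta_M}$ by construction.

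The main obstacle is the base case: constructing the kernel for an isotopy close to the identity and verifying that its microsupport is \emph{exactly} $\Lambda_\phi$, not merely contained in it. This either requires a genuinely explicit local model together with the microsupport computation for constant sheaves on the associated sets — which is essentially the content of Sections 3 and 4 of the present paper in the geodesic-flow cases — or the microlocal cut-off lemma to remove a slightly-too-large microsupport; and the local models must be chosen canonically enough that no obstruction arises when patching. Homogeneity of $\phi$ is used throughout to keep the relevant Lagrangians conic and disjoint from the zero section, so that the normalization $\mathrm{SS}(K_\phi)\setminus 0_{M\times M\times I} = \Lambda_\phi$ is the natural one.
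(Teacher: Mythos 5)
This theorem is quoted from \cite{GKS} and the paper supplies no proof of its own; Sections~3 and~4 only \emph{realize} the kernel in the two geodesic-flow examples, taking the abstract existence and uniqueness for granted. So the only meaningful comparison is with the argument in \cite{GKS} itself. Your broad architecture --- uniqueness via a ``no $T^*I$-component, hence locally constant in $t$'' argument applied to a convolution of kernels, existence via a local-in-time construction glued over $I$, and the reverse inclusion $\Lambda_\phi \subset \mathrm{SS}(K)$ because a kernel inducing an equivalence cannot drop microsupport --- is indeed the shape of the GKS proof, so the high-level plan is sound.

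The genuine gap is in your local existence step. You propose that for $|t|$ small one can normalize the isotopy on a chart so that the kernel becomes a constant sheaf $\mathbb{K}_Z$ on a locally closed subset $Z \subset \mathbb{R}^n \times \mathbb{R}^n \times I$. That is not available in general and is not what GKS do. Constant-sheaf kernels of the form $\mathbb{K}_Z$ have microsupport that is a conormal to a hypersurface (together with pieces of the zero section), but for a general homogeneous Hamiltonian $H$ --- which need not be of the form $|\xi|_g$, need not be positive, and need not be fibrewise convex --- the graph Lagrangian $\Lambda_{\phi_t}$ for small $t\neq 0$ is not the conormal of any hypersurface in $M\times M$, and no change of variables will make it one; the Hamiltonian vector field of a homogeneous $H$ acts in the $\xi$-direction as well as the $x$-direction, so one is deforming the cosphere in a way not captured by a single ``ball.'' The constant-sheaf picture is exactly the extra structure exploited in Sections~3--4 of this paper for geodesic flows, where $H=|\xi|_{g^*}$ yields a genuine distance function; you are reading that special case back into the general theorem. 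GKS's local argument is necessarily more indirect: they deform $\Lambda_\phi$ on a short time interval back to $\dot T^*_{\Delta_M}(M\times M)\times 0_J$ through a family of conic Lagrangians, and propagate $\mathbb{K}_{\Delta_M}$ along that deformation using non-characteristic deformation and a careful microsupport estimate, rather than by exhibiting an explicit set $Z$. The microlocal cut-off lemma, which you invoke as a fallback, cuts microsupport with respect to a fixed convex cone and does not by itself manufacture a kernel with a prescribed non-conormal Lagrangian microsupport. Finally, note the circular dependency in your plan: uniqueness as you state it requires the kernel of $\phi^{-1}$, and existence (the gluing over overlapping subintervals of $I$) invokes uniqueness. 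GKS disentangle this by proving local uniqueness and local existence together on small intervals and then bootstrapping; you should either adopt that structure or replace the $K\circ K_{\phi^{-1}}$ argument by one that works directly with the two candidate kernels, so that uniqueness never presupposes existence for a different isotopy.
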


\begin{example}
\label{example}
Consider the Hamiltonian function $H\colon \dot{T}^*\mathbb{R}^n \to \mathbb{R}$, $H(x,\xi)=|\xi| $ and the induced Hamiltonian isotopy $\phi\colon \dot{T}^*\mathbb{R}^n\times \mathbb{R} \to \dot{T}^*\mathbb{R}^n$, $\phi_t(x,\xi)=(x+t \frac{\xi}{|\xi|}, \xi)$.
Then the GKS kernel $K_\phi \in \Sh(\mathbb{R}^n \times \mathbb{R}^n \times  \mathbb{R} ) $ is given by 
\begin{equation*} 
\Cocone(\mathbb{K}_{\{(x,y,t) \mid |x-y|\leq -t\}} \xrightarrow{\psi} \mathbb{K}_{\{(x,y,t) \mid |x-y|<t \}}[n+1])
\end{equation*}
where $\psi \in \Hom^0_{\Sh(\mathbb{R}^n \times \mathbb{R}^n \times \mathbb{R})}(\mathbb{K}_{\{(x,y,t) \mid |x-y|\leq -t\}} ,\mathbb{K}_{\{(x,y,t) \mid |x-y|<t \}}[n
+1])$ is a morphism such that $H^0(\psi) \in  \Ext^{n+1} ( \mathbb{K}_{\{(x,y,t) \mid |x-y|\leq -t\}} ,\mathbb{K}_{\{(x,y,t) \mid |x-y|<t \}} )  \cong \mathbb{K}$ is a generator and we set $\Cocone(\cdot) :  =\Cone(\cdot)[-1]$.
Note that this example is essentially the same as Proposition \ref{prop1.3}.
\end{example}

\begin{dfn}
Let $f\colon X \to Y$ be a morphism in $\mathrm{Mod}(\mathbb{K})$ or $\mathrm{Sh}(M)$. We define $\Cocone(f) :  =\Cone(f)[-1]$.
\end{dfn}

\section{GKS kernel of the geodesic flow on $S^n$}

We fix the standard metric $g$ on $S^n$. Note that the diameter of $S^n$ is $\pi$ and the period of the geodesic flow on $S^n$ is $2\pi$. We consider the Hamiltonian function $H\colon \dot{T}^*S^n \to \mathbb{R} , H(x,\xi)=|\xi|_{g^*}$ and the induced Hamiltonian isotopy $\phi \colon  \dot{T}^*S^n \times \mathbb{R} \to \dot{T}^*S^n$. In this section, we shall explicitly construct the GKS kernel $K_\phi$ of the geodesic flow $\phi$. Note that the cohomology of $K_\phi$ is already known in \cite{GKS}. \\

For $i \in \mathbb{Z}_{>0}$, we define the open subset $Z_i$ of $S^n \times S^n \times \mathbb{R} $ by
\begin{equation*}
Z_i :  =\{(x,y,t)\in S^n \times S^n \times \mathbb{R} \mid \mathrm{dist}(x,(-1)^{i-1}y)<t-(i-1)\pi\}.
\end{equation*}

\begin{lem}
\label{lem3.1}
For $i \in\mathbb{Z}_{>0}$ and $\epsilon > 0$, the inclusion map $ j \colon  S^n \times S^n \times \{i\pi +\epsilon \} \to Z_i$ is a homotopy equivalence.
\end{lem}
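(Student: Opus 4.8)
The plan is to prove the slightly stronger statement that $S^n\times S^n\times\{i\pi+\epsilon\}$ is a strong deformation retract of $Z_i$, the retraction being given by sliding the time coordinate $t$ to the value $i\pi+\epsilon$.

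First I would rewrite each $Z_i$ as an open ``epigraph''. Introduce the continuous function
\[
f_i\colon S^n\times S^n\to\mathbb{R},\qquad f_i(x,y)=(i-1)\pi+\mathrm{dist}\bigl(x,(-1)^{i-1}y\bigr),
\]
that is, $f_i(x,y)=(i-1)\pi+\mathrm{dist}(x,y)$ when $i$ is odd and $f_i(x,y)=(i-1)\pi+\mathrm{dist}(x,-y)$ when $i$ is even; it is continuous because $\mathrm{dist}$ is continuous and $y\mapsto-y$ is a homeomorphism of $S^n$. Comparing with the definitions of $Z_1,Z_2,\dots$ one sees $Z_i=\{(x,y,t)\mid t>f_i(x,y)\}$. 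Since the diameter of $(S^n,g)$ is $\pi$ we have $0\le\mathrm{dist}\bigl(x,(-1)^{i-1}y\bigr)\le\pi$, hence
\[
(i-1)\pi\le f_i(x,y)\le i\pi\qquad\text{for all }(x,y)\in S^n\times S^n.
\]
In particular $f_i(x,y)\le i\pi<i\pi+\epsilon$, so $S^n\times S^n\times\{i\pi+\epsilon\}\subset Z_i$ and the inclusion $j$ is well defined.

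Next I would write down the homotopy $H\colon Z_i\times[0,1]\to S^n\times S^n\times\mathbb{R}$,
\[
H\bigl((x,y,t),s\bigr)=\bigl(x,\,y,\,(1-s)\,t+s\,(i\pi+\epsilon)\bigr),
\]
and check that it takes values in $Z_i$: for a fixed point $(x,y,t)\in Z_i$ and $s\in[0,1]$, the number $(1-s)t+s(i\pi+\epsilon)$ is a convex combination of $t$ and $i\pi+\epsilon$, and since both $t>f_i(x,y)$ and $i\pi+\epsilon>f_i(x,y)$ (by the bound above), the convex combination again exceeds $f_i(x,y)$, i.e.\ $H((x,y,t),s)\in Z_i$. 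The remaining verifications are immediate: $H$ is continuous, $H(\cdot,0)=\mathrm{id}_{Z_i}$, the image of $H(\cdot,1)$ lies in $S^n\times S^n\times\{i\pi+\epsilon\}$, and $H(\cdot,s)$ fixes $S^n\times S^n\times\{i\pi+\epsilon\}$ pointwise for every $s$. Hence $j$ is a homotopy equivalence, with homotopy inverse the retraction $(x,y,t)\mapsto(x,y,i\pi+\epsilon)$.

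I do not expect a serious obstacle here; the only real content is the first step, namely recognizing $Z_i$ as the region strictly above the graph of a continuous function $f_i$ and noticing that $\mathrm{diam}(S^n)=\pi$ forces the level $i\pi+\epsilon$ to lie strictly above that graph everywhere. (This is exactly where $\epsilon>0$ is used: at $t=i\pi$ the slice would omit the antipodal locus and the argument would break.) Granting that, the sliding homotopy is routine. Equivalently, one could package the first step as the homeomorphism $Z_i\xrightarrow{\sim}S^n\times S^n\times(0,\infty)$, $(x,y,t)\mapsto(x,y,t-f_i(x,y))$, under which $j$ becomes a section of the projection to $S^n\times S^n$ (whose fibers are contractible) and is therefore a homotopy equivalence.
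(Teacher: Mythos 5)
Your proof is correct and uses the same sliding homotopy $H(x,y,t,s)=(x,y,(1-s)t+s(i\pi+\epsilon))$ that the paper uses. You supply additional verification (that $Z_i$ is the strict epigraph of a continuous $f_i$ bounded by $i\pi$, so that $H$ indeed lands in $Z_i$), which the paper leaves implicit, but the underlying argument is the same.
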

\begin{proof}
The natural map $r\colon  Z_i \to S^n \times S^n \times \{ i\pi + \epsilon \}$, $r (x,y,t) = (x, y, i\pi + \epsilon)$ is a homotopy equivalence because a homotopy $H\colon Z_i \times [0,1] \to Z_i$ between $\mathrm{id}_{Z_i}$ and $j \circ r$ is given by $H(x,y,t,s)=(x,y,(1-s)t + s(i\pi + \epsilon) )$.
\end{proof}

\begin{lem}
\label{lem3.2}
For $i \in \mathbb{Z}_{>0}$ odd and $0<\epsilon<\pi$, the inclusion map $j \colon  \Delta_{S^n} \times \{i\pi +\epsilon \} \to Z_i \setminus Z_{i+1}$ is a homotopy equivalence where $\Delta_{S^n}$ is the diagonal set of $S^n \times S^n$. For $ i \in \mathbb{Z}_{>0}$ even and $0<\epsilon<\pi$, the inclusion map $j \colon  -\Delta_{S^n} \times \{i\pi +\epsilon\} \to Z_i \setminus Z_{i+1}$ is a homotopy equivalence where $ -\Delta_{S^n}$ is the anti-diagonal set of $S^n \times S^n$
\end{lem}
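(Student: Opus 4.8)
The plan is to identify the homotopy type of $Z_i \cap Z_{i+1}^c$ by first understanding $Z_{i+1}^c$ relative to $Z_i$, and then applying a deformation retraction argument in the spirit of Lemma \ref{lem3.1}. Concretely, suppose $i$ is odd, so $Z_i = \{\mathrm{dist}(x,y) < t - (i-1)\pi\}$ and $Z_{i+1} = \{\mathrm{dist}(x,-y) < t - i\pi\}$ (after shifting indices so the formulas match the displayed definitions). Then $Z_i \cap Z_{i+1}^c = \{(x,y,t) \mid \mathrm{dist}(x,y) < t - (i-1)\pi \text{ and } \mathrm{dist}(x,-y) \geq t - i\pi\}$. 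The first step is to check that this set is nonempty and that for each $(x,y)$ the fiber over it in the $t$-coordinate is a (possibly empty) interval: as $t$ increases past $(i-1)\pi + \mathrm{dist}(x,y)$ the point enters $Z_i$, and it enters $Z_{i+1}$ once $t > i\pi + \mathrm{dist}(x,-y)$; since $\mathrm{dist}(x,y) + \mathrm{dist}(x,-y) = \pi$ on $S^n$, the point stays in $Z_i \cap Z_{i+1}^c$ for $t \in ((i-1)\pi + \mathrm{dist}(x,y),\, i\pi + \mathrm{dist}(x,-y)]$, an interval whose length is $i\pi + \mathrm{dist}(x,-y) - (i-1)\pi - \mathrm{dist}(x,y) = \pi + \pi - 2\mathrm{dist}(x,y) = 2(\pi - \mathrm{dist}(x,y))$, which is $> 0$ precisely when $\mathrm{dist}(x,y) < \pi$, i.e., when $y \neq -x$.

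Thus $Z_i \cap Z_{i+1}^c$ projects onto $\{(x,y) \in S^n \times S^n \mid y \neq -x\} = (S^n \times S^n) \setminus (-\Delta_{S^n})$, with contractible (interval) fibers, so the projection $p(x,y,t) = (x,y)$ is a homotopy equivalence onto this base — one exhibits an explicit straight-line homotopy within each fiber, as in Lemma \ref{lem3.1}, contracting $t$ to, say, the lower endpoint or any chosen interior point. (One must be slightly careful that the chosen section, e.g. $t = (i-1)\pi + \mathrm{dist}(x,y) + \text{something}$, is continuous and lands strictly inside the half-open fiber; taking a point that depends continuously on $\mathrm{dist}(x,y)$ and stays in the open part works.) Next, the complement $(S^n \times S^n) \setminus (-\Delta_{S^n})$ deformation retracts onto $\Delta_{S^n}$: the antipodal-point condition $y \neq -x$ means $x$ and $y$ are never antipodal, so there is a unique shortest geodesic from $y$ to $x$, and sliding $y$ along it to $x$ gives a deformation retraction onto the diagonal. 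Composing, $Z_i \cap Z_{i+1}^c$ is homotopy equivalent to $\Delta_{S^n} \cong S^n$, and chasing the homotopies shows the inclusion of $\Delta_{S^n} \times \{i\pi + \epsilon\}$ (for $\epsilon \in (0,\pi)$, which guarantees this slice actually meets every fiber) realizes this equivalence. The case $i$ even is identical after swapping the roles of $y$ and $-y$: there $Z_i \cap Z_{i+1}^c$ retracts onto the anti-diagonal $-\Delta_{S^n}$, using that $\mathrm{dist}(x,-y) < \pi$ forces $x$ and $-y$ to be non-antipodal, i.e.\ $x \neq y$, and the unique shortest geodesic from $-y$ to $x$ gives the retraction.

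The main obstacle is purely bookkeeping: matching the index conventions in the displayed definitions of $Z_i$ (where $Z_1$ involves $\mathrm{dist}(x,y) < t$, $Z_2$ involves $\mathrm{dist}(x,-y) < t - \pi$, etc., so the ``shift'' is by $(i-1)\pi$ and the parity of whether one sees $y$ or $-y$ flips with $i$) and then verifying the fiber-interval endpoints with the correct constants, and confirming that the slice $t = i\pi + \epsilon$ with $0 < \epsilon < \pi$ lies in the fiber over every $(x,y)$ with the appropriate points non-antipodal — this needs $\epsilon > 0$ (to be past the lower endpoint when $\mathrm{dist} = 0$) and $\epsilon < \pi$ (to be at or below the upper endpoint; at $\mathrm{dist}$ near its max the upper endpoint is near $i\pi$, wait — one must double-check that the slice is chosen to meet the fiber, which may require picking the homotopy to retract onto a non-constant section rather than literally onto $t = i\pi + \epsilon$, and then separately noting that section is homotopic through the base to the constant one). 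Once the fiber structure is pinned down, everything else is a standard two-step deformation retraction and poses no real difficulty.
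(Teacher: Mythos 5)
Your proposal is correct in substance and rests on the same two geometric ingredients as the paper's proof: the $t$-fiber over $(x,y)$ is the half-open interval $\bigl((i-1)\pi+\mathrm{dist}(x,y),\,(i+1)\pi-\mathrm{dist}(x,y)\bigr]$, nonempty exactly when $y\neq -x$ (using $\mathrm{dist}(x,y)+\mathrm{dist}(x,-y)=\pi$ on $S^n$), and the base $(S^n\times S^n)\setminus (-\Delta_{S^n})$ retracts onto $\Delta_{S^n}$ by sliding $y$ to $x$ along the unique shortest geodesic. The difference is packaging. You split the retraction into a fiber-collapse followed by a base-retraction and then notice, correctly, that this ordering creates friction: the constant slice $t=i\pi+\epsilon$ fails to lie in the fiber once $\mathrm{dist}(x,y)>\pi-\epsilon$, so the fiber step cannot land on the target directly, and you resort to an intermediate non-constant section plus an additional unspecified homotopy. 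The paper instead runs both deformations simultaneously via the single formula $H(x,y,t,s)=\bigl(x,\ \Exp_{y,\,s\,\mathrm{dist}(x,y)}(v_x),\ (1-s)t+s(i\pi+\epsilon)\bigr)$, and this silently resolves your issue: as $s$ increases, $\mathrm{dist}(x,y_s)=(1-s)\mathrm{dist}(x,y)$ shrinks, so the admissible $t$-interval widens in lockstep with $t_s$ approaching $i\pi+\epsilon$, and the membership inequalities $(i-1)\pi+\mathrm{dist}(x,y_s)<t_s\le(i+1)\pi-\mathrm{dist}(x,y_s)$ hold throughout. Your stepwise route can certainly be completed (retract first to the right-endpoint section $t=(i+1)\pi-\mathrm{dist}(x,y)$, then retract the base to $\Delta_{S^n}$, then slide $t$ from $(i+1)\pi$ to $i\pi+\epsilon$ within the fiber over the diagonal, which is where the bound $\epsilon<\pi$ is used), but as written those last two steps are an acknowledged IOU rather than a proof; and one small slip should be fixed regardless: you suggest contracting $t$ to ``the lower endpoint,'' but the lower endpoint is excluded from the half-open fiber, so only an interior or right-endpoint section is admissible.
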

\begin{proof}
Consider the case of $i$ odd. For $(x,y) \notin \Delta_{S^n} \cup -\Delta_{S^n}$, we define $v_x\in T_y{S^n}$ by the equations $\Exp_{y,\mathrm{dist}(x,y)}(v_x) = x$ and $|v_x|_g=1$. For $(x,y) \in \Delta_{S^n}$, we define $v_x \in T_y{S^n}$ by $v_x=0$. Note that $v_x$ is determined uniquely. The natural map $r\colon  Z_i \setminus Z_{i+1} \to \Delta_{S^n} \times \{ i\pi + \epsilon \}$, $r (x,y,t) = (x, x, i\pi + \epsilon)$ is a homotopy equivalence because a homotopy $H\colon Z_i\setminus Z_{i+1} \times [0,1] \to Z_i \setminus Z_{i+1}$ between $\mathrm{id}_{Z_i \setminus Z_{i+1}}$ and $j \circ r$ is given by $H(x,y,t,s)=(x, \Exp_{ y,s\mathrm{dist}(x,y) }(v_x) ,(1-s)t + s(i\pi + \epsilon) )$.

Consider the case of $i$ even. For $(x,y) \notin \Delta_{S^n} \cup -\Delta_{S^n}$, we define $v_x\in T_y{S^n}$ by the equations $\Exp_{y,\mathrm{dist}(x,y)}(v_x) = x$ and $|v_x|_g=1$. For $(x,y) \in -\Delta_{S^n}$, we define $v_x \in T_y{S^n}$ by $v_x=0$. Note that $v_x$ is determined uniquely. The natural map $r\colon  Z_i \setminus Z_{i+1} \to -\Delta_{S^n} \times \{ i\pi + \epsilon \}$, $r (x,y,t) = (x, -x, i\pi + \epsilon)$ is a homotopy equivalence because a homotopy $H\colon Z_i\setminus Z_{i+1} \times [0,1] \to Z_i \setminus Z_{i+1}$ between $\mathrm{id}_{Z_i \setminus Z_{i+1}}$ and $j \circ r$ is given by $H(x,y,t,s)=(x, \Exp_{ y,s (\pi-\mathrm{dist}(x,y) )}(-v_x) ,(1-s)t + s(i\pi + \epsilon) )$.
\end{proof}

\begin{lem}
\label{lem3.3}
For $i \in \mathbb{Z}_{>0}$ odd, the object $\Hom_{\Sh(S^n \times S^n \times \mathbb{R})}(\mathbb{K}_{Z_i} , \mathbb{K}_{Z_{i+1}}) $ is isomorphic to $ C^*(S^n \times S^n , S^n \times S^n \setminus -\Delta_{S^n})$ in $\Mod(\mathbb{K})$. For $i \in \mathbb{Z}_{>0}$ even, the object $\Hom_{\Sh(S^n \times S^n \times \mathbb{R})}(\mathbb{K}_{Z_i} , \mathbb{K}_{Z_{i+1}}) $ is isomorphic to $ C^*(S^n \times S^n, S^n \times S^n \setminus \Delta_{S^n} )$ in $\Mod(\mathbb{K})$.
\end{lem}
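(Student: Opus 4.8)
The plan is to exhibit a distinguished triangle with $\Hom_{\Sh(X)}(\mathbb{K}_{Z_i},\mathbb{K}_{Z_{i+1}})$ as one vertex (here $X:=S^n\times S^n\times\mathbb{R}$), identify the other two vertices as singular cochain complexes via Proposition \ref{prop2.6}, and then recognize the triangle as the one of Proposition \ref{prop2.7}. First note that on $S^n$ one has $\mathrm{dist}(x,-y)=\pi-\mathrm{dist}(x,y)$, so each $Z_i$ is the open subset of $X$ cut out by a single linear inequality in $t$ and $d:=\mathrm{dist}(x,y)$; in particular $Z_{i+1}^c$ is closed in $X$. Applying the first triangle of Proposition \ref{prop2.5} to the open subset $Z_{i+1}\subset X$ and the sheaf $\mathbb{K}_X$, and then $\Hom_{\Sh(X)}(\mathbb{K}_{Z_i},-)$, yields a distinguished triangle
\begin{equation*}
\Hom_{\Sh(X)}(\mathbb{K}_{Z_i},\mathbb{K}_{Z_{i+1}})\longrightarrow\Hom_{\Sh(X)}(\mathbb{K}_{Z_i},\mathbb{K}_X)\longrightarrow\Hom_{\Sh(X)}(\mathbb{K}_{Z_i},\mathbb{K}_{Z_{i+1}^c})\xrightarrow{+1}.
\end{equation*}

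Fix $\epsilon$ with $0<\epsilon<\pi$. For the middle term, Proposition \ref{prop2.6} applies to the pair $(Z_i,X)$ — $Z_i$ is open, $X$ is closed, and $Z_i\cap X=Z_i$ is locally contractible as an open subset of a manifold — and gives $\Hom_{\Sh(X)}(\mathbb{K}_{Z_i},\mathbb{K}_X)\cong C^*(Z_i)$, which is $\cong C^*(S^n\times S^n)$ in $\Mod(\mathbb{K})$ by Lemma \ref{lem3.1}. For the right term, observe that the two inequalities defining $Z_i\cap Z_{i+1}^c$ force $d<\pi$, so $Z_i\cap Z_{i+1}^c$ is disjoint from $-\Delta_{S^n}$ when $i$ is odd (from $\Delta_{S^n}$ when $i$ is even), and one checks that $Z_i\cap Z_{i+1}^c$ is locally contractible (for instance, it is subanalytic). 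Hence Proposition \ref{prop2.6} applies to $(Z_i,Z_{i+1}^c)$ and gives $\Hom_{\Sh(X)}(\mathbb{K}_{Z_i},\mathbb{K}_{Z_{i+1}^c})\cong C^*(Z_i\cap Z_{i+1}^c)$; moreover, unwinding the natural isomorphisms in the proof of Proposition \ref{prop2.6}, the middle arrow of the triangle becomes the restriction map on singular cochains along the closed inclusion $Z_i\cap Z_{i+1}^c\hookrightarrow Z_i$.

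It now suffices to identify this restriction map, up to quasi-isomorphism, with the restriction $C^*(S^n\times S^n)\to C^*(S^n\times S^n\setminus(-\Delta_{S^n}))$ (for $i$ odd; with $\Delta_{S^n}$ replacing $-\Delta_{S^n}$ for $i$ even). For this I would use the slice $W:=(Z_i\cap Z_{i+1}^c)\cap\{t=i\pi+\epsilon\}$ as a common model: for $i$ odd, $W$ is a closed disk-bundle neighborhood of $\Delta_{S^n}$ in $S^n\times S^n$ contained in the open set $S^n\times S^n\setminus(-\Delta_{S^n})$; the inclusions $W\hookrightarrow Z_i\cap Z_{i+1}^c$ and $W\hookrightarrow S^n\times S^n\setminus(-\Delta_{S^n})$ are homotopy equivalences, by Lemma \ref{lem3.2} together with the two-out-of-three property, using that both the source and the target of the latter deformation retract onto $\Delta_{S^n}$ by moving $y$ toward $x$ along the unique minimizing geodesic (which exists since $y\neq-x$); and the slice inclusion $S^n\times S^n\times\{i\pi+\epsilon\}\hookrightarrow Z_i$ is a homotopy equivalence by Lemma \ref{lem3.1}. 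These inclusions fit into a commuting square, so the middle arrow of the triangle is identified, in the homotopy category of $\Mod(\mathbb{K})$, with $C^*(S^n\times S^n)\to C^*(S^n\times S^n\setminus(-\Delta_{S^n}))$. Applying Proposition \ref{prop2.7} to the pair $(S^n\times S^n,\,S^n\times S^n\setminus(-\Delta_{S^n}))$ then identifies the remaining vertex, giving $\Hom_{\Sh(X)}(\mathbb{K}_{Z_i},\mathbb{K}_{Z_{i+1}})\cong C^*(S^n\times S^n,\,S^n\times S^n\setminus(-\Delta_{S^n}))$ for $i$ odd; the case $i$ even is proved verbatim with $\Delta_{S^n}$ and $-\Delta_{S^n}$ interchanged.

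The main obstacle is the bookkeeping in the last two paragraphs: tracing through the natural isomorphisms of Proposition \ref{prop2.6} to confirm that the connecting arrow of the triangle is genuinely the cochain restriction map, and arranging the homotopy equivalences of Lemmas \ref{lem3.1} and \ref{lem3.2} compatibly through the slice $W$ so that this restriction becomes the one between $C^*(S^n\times S^n)$ and $C^*(S^n\times S^n\setminus(-\Delta_{S^n}))$. Checking that $Z_i\cap Z_{i+1}^c$ is locally contractible is a minor point, and once these are in place the assembly of the distinguished triangle is routine.
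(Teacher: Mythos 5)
Your proposal follows essentially the same route as the paper: the distinguished triangle from Proposition \ref{prop2.5} applied to $\mathbb{K}_{Z_{i+1}}\subset \mathbb{K}_X$, identification of the two non-trivial vertices as singular cochain complexes via Proposition \ref{prop2.6}, the homotopy equivalences of Lemmas \ref{lem3.1} and \ref{lem3.2}, and finally Proposition \ref{prop2.7}. The only (cosmetic) difference is the order of operations at the end: you replace $C^*(Z_i\cap Z_{i+1}^c)$ by $C^*(S^n\times S^n\setminus(-\Delta_{S^n}))$ through the slice $W$ before forming the cocone, whereas the paper first forms $C^*(S^n\times S^n,\Delta_{S^n})$ and then uses the homotopy equivalence $\Delta_{S^n}\hookrightarrow S^n\times S^n\setminus(-\Delta_{S^n})$ at the relative-cochain level; your explicit slice $W$ also makes the compatibility of the two homotopy equivalences with the restriction map visible, a point the paper leaves implicit. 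Both are correct and lead to the same conclusion.
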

\begin{proof}
We only prove the case of $i$ odd since the proof for $i$ even is similar. We have the following chain of isomorphisms,
\begin{equation*}
\begin{split}
&\Hom_{\Sh(S^n \times S^n \times \mathbb{R})}(\mathbb{K}_{Z_i},\mathbb{K}_{Z_{i+1}}) \\
&\cong \Cocone(\Hom_{\Sh(S^n \times S^n \times \mathbb{R})}(\mathbb{K}_{Z_i}, \mathbb{K}_{S^n \times S^n \times \mathbb{R} }) \to \Hom_{\Sh(S^n \times S^n \times \mathbb{R})}(\mathbb{K}_{Z_i} , \mathbb{K}_{{Z_{i+1}}^c})) \\
&\cong \Cocone(C^*(Z_i) \to C^*(Z_i \setminus { Z_{i+1} })) \\
&\cong \Cocone(C^*(S^n \times S^n) \to C^*(\Delta_{S^n})) \\
&\cong C^*(S^n \times S^n , \Delta_{S^n}) \\
&\cong C^*(S^n \times S^n , S^n \times S^n \setminus -\Delta_{S^n}).
\end{split}
\end{equation*}
The first isomorphism follows from Proposition \ref{prop2.5}.
The second follows from Proposition \ref{prop2.6}.
The third follows from Lemma \ref{lem3.1} and \ref{lem3.2}.
The fourth follows from Proposition \ref{prop2.7}. 
The fifth follows from the fact that the inclusion map $\Delta_{S^n} \to S^n \times S^n \setminus -\Delta_{S^n}$ is a homotopy equivalence.

\end{proof}

For $i \in \mathbb{Z}_{>0}$, we choose a morphism $\psi_i \in \Hom^0_{\Sh(S^n \times S^n \times \mathbb{R})} ( \mathbb{K}_{Z_i} , \mathbb{K}_{Z_{i+1}}[n] ) $ such that $H^0(\psi_i) \in \Ext^n(\mathbb{K}_{Z_i}, \mathbb{K}_{Z_{i+1}}) \cong \mathbb{K}$ is a generator. Note that the isomorphism $\Ext^n(\mathbb{K}_{Z_i}, \mathbb{K}_{Z_{i+1}}) \cong \mathbb{K}$ follows from Lemma \ref{lem3.3}. Thus we obtain the sequence in $ \Sh(S^n \times S^n \times \mathbb{R})$,
\begin{equation*}
\mathbb{K}_{Z_1} \xrightarrow{\psi_1} \mathbb{K}_{Z_2}[n] \xrightarrow{\psi_2} \mathbb{K}_{Z_3}[2n] \xrightarrow{\psi_3} \mathbb{K}_{Z_4}[3n] \xrightarrow{\psi_4} \cdots .
\end{equation*}

\begin{lem}
\label{lem3.4}
Set $A :  ={\{(x,t) \in \mathbb{R}^n \times \mathbb{R} \mid |x| > -t\}}$ and $B :  =\{(x,t) \in \mathbb{R}^n \times \mathbb{R} \mid |x| < t\}$. Let $ \theta \in \Hom^0_{\Sh(\mathbb{R}^n \times \mathbb{R})} ( \mathbb{K}_A , \mathbb{K}_B[n] ) $ be a morphism such that $H^0(\theta) \in \Ext^n(\mathbb{K}_A, \mathbb{K}_B) \cong \mathbb{K}$ is a generator. Then 
\begin{equation*}
\begin{split}
\mathrm{SS} ( \mathbb{K}_A ) \cap T^*_{(0,0)}(\mathbb{R}^n \times \mathbb{R}) = \{ a_1 dx_1 + \dots + a_{n+1} dt \mid a_{n+1} \leq 0 , {a_1}^2 + \dots + {a_n}^2 \leq {a_{n+1}}^2 \} , \\
\mathrm{ SS } (\mathbb{K}_B ) \cap T^*_{(0,0)}(\mathbb{R}^n \times \mathbb{R}) = \{ a_1 dx_1 + \dots + a_{n+1} dt \mid a_{n+1} \leq 0 , {a_1}^2 + \dots + {a_n}^2 \leq {a_{n+1}}^2 \} , \\
\mathrm{SS} ( \Cocone(\theta) ) \cap T^*_{(0,0)}(\mathbb{R}^n \times \mathbb{R}) = \{ a_1 dx_1 + \dots + a_{n+1} dt \mid a_{n+1} \leq 0 , {a_1}^2 + \dots + {a_n}^2 = {a_{n+1}}^2 \}.
\end{split}
\end{equation*}
\end{lem}
\begin{proof}
For a convex cone $\gamma \subset \mathbb{R}^n \times \mathbb{R}$, we denote its dual cone by $\gamma^\vee$. 
We denote the open ball of radius $\epsilon$ and center $x$ by $B(x;\epsilon)$.
We identify $ T^*_{(0,0)} (\mathbb{R}^n \times \mathbb{R})$ and the dual vector space of $\mathbb{R}^n \times \mathbb{R}$.
Let $\mathbb{D}\colon \mathrm{Sh}(\mathbb{R}^n \times \mathbb{R})\to\mathrm{Sh}(\mathbb{R}^n \times \mathbb{R}) $ be the Verdier dual functor.

\begin{enumerate}

\item[(i)]

Note that $A^c$ is a closed convex cone in $\mathbb{R}^n \times \mathbb{R}$. 
Then we have
\begin{align*}
&\mathrm{SS} ( \mathbb{K}_A ) \cap T^*_{(0,0)}(\mathbb{R}^n \times \mathbb{R})  \\
&= \mathrm{SS} ( \mathbb{K}_{A^c} ) \cap T^*_{(0,0)}(\mathbb{R}^n \times \mathbb{R})\\
&= (A^c)^\vee \\
&= \{(x,t) \in \mathbb{R}^n \times \mathbb{R} \mid |x|\leq -t \}^\vee \\
&= \{ a_1 dx_1 + \dots + a_{n+1} dt \in T^*_{(0,0)}(\mathbb{R}^n \times \mathbb{R}) \mid a_{n+1} \leq 0 , {a_1}^2 + \dots + {a_n}^2 \leq {a_{n+1}}^2 \}. 
\end{align*}
The second equality follows from \cite[Proposition 5.3.1]{KS}.

\item[(ii)]

Note that $B$ is a convex open cone in $\mathbb{R}^n \times \mathbb{R}$. Then we have
\begin{align*}
&\mathrm{SS}(\mathbb{K}_B) \cap T^*_{(0,0)}(\mathbb{R}^{n} \times \mathbb{R}) \\
&= \mathrm{SS}(\mathbb{D}({\mathbb{K}_{\overline{B}}})) \cap T^*_{(0,0)}(\mathbb{R}^n \times \mathbb{R}) \\
&= -(\mathrm{SS}(\mathbb{K}_{\overline{B}}) ) \cap T^*_{(0,0)}(\mathbb{R}^n \times \mathbb{R}) \\
&= - \overline{B}^\vee \\
&= - \{(x,t) \in \mathbb{R}^n \times \mathbb{R} \mid |x| \leq t \}^\vee\\
&= \{(x,t) \in \mathbb{R}^n \times \mathbb{R} \mid |x| \leq -t \}^\vee\\
&= \{a_1dx_1+\dots + a_{n+1}dx_{n+1} \in T^*_{(0,0)}(\mathbb{R}^n \times \mathbb{R}) \mid a_{n+1} \leq 0, {a_1}^2+\dots + {a_n}^2  \leq {a_{n+1}}^2 \}.
\end{align*}
The second equality follows from the fact that Verdier duality transforms the microsupport by the antipodal map.

\item[(iii)]
Consider $\xi= a_1d x_1+\dots+a_{n+1}dt  \in T^*_{(0,0)}(\mathbb{R}^n \times \mathbb{R} )$ and a $C^\infty$-function $f\colon \mathbb{R}^n \times \mathbb{R} \to \mathbb{R}$ such that  $f(0,0)=0$ and $d_{(0,0)}f=\xi$. 
Let $i:\{f\geq 0\} \to \mathbb{R}^n \times \mathbb{R}$ be the inclusion map.
If $a_{n+1} < 0 $ and $ {a_1}^2 + \dots + {a_n}^2 < {a_{n+1}}^2$, then there exists a sufficiently small $\epsilon>0$ such that $B \cap B(0;\epsilon) \subset \{f<0\} \cap B(0;\epsilon) \subset A \cap B(0;\epsilon)$ and we have the following commutative diagram: 

\begin{equation*}
\begin{tikzcd}
(\Gamma_{\{f \geq 0 \}}\mathbb{K}_A)_{(0,0)}  \ar[r] \ar[d,"\cong"] & (\Gamma_{\{f \geq 0 \}}\mathbb{K}_B[n])_{(0,0)}  \ar[d,"\cong"] \\
\Gamma(B(0;\epsilon) ; i_*i^{!}\mathbb{K}_A )  \ar[d,"\cong"] & \Gamma(B(0;\epsilon); i_*i^{!}\mathbb{K}_B[n]) \ar[d,"\cong"] \\ 
\Hom_{\Sh(\mathbb{R}^n \times \mathbb{R})}(\mathbb{K}_{B(0;\epsilon)}, i_*i^{!}\mathbb{K}_A)  \ar[d,"\cong"] & \Hom_{\Sh(\mathbb{R}^n \times \mathbb{R})}(\mathbb{K}_{B(0;\epsilon) }, i_*i^{!} \mathbb{K}_B[n]) \ar[d,"\cong"] \\
\Hom_{\Sh(\mathbb{R}^n \times \mathbb{R})}(i_!i^{-1}\mathbb{K}_{B(0;\epsilon)},\mathbb{K}_A)  \ar[d,"\cong"] & \Hom_{\Sh(\mathbb{R}^n \times \mathbb{R})}(i_! i^{-1} \mathbb{K}_{B(0;\epsilon)}, \mathbb{K}_B[n]) \ar[d,"\cong"] \\
\Hom_{\Sh(\mathbb{R}^n \times \mathbb{R})}(\mathbb{K}_{ \{ f \geq 0\} \cap B(0;\epsilon) } , \mathbb{K}_{A} )  \ar[d,"\cong"] & \Hom_{ \Sh(\mathbb{R}^n \times \mathbb{R})}(\mathbb{K}_{\{f \geq 0\} \cap B(0;\epsilon)} , \mathbb{K}_{B}[n] ) \ar[d,"\cong"] \\
\Hom_{\Sh(\mathbb{R}^n \times \mathbb{R})}(\mathbb{K}_{ \{ f < 0 \} \cap B(0;\epsilon) } , \mathbb{K}_{A})  \ar[d,"\cong"][-1] & \Hom_{\Sh(\mathbb{R}^n \times \mathbb{R})}(\mathbb{K}_{\{f < 0 \} \cap B(0;\epsilon)} , \mathbb{K}_{B}[n])[-1] \ar[d,"\cong"] \\
\Hom_{\Sh(\mathbb{R}^n \times \mathbb{R})}(\mathbb{K}_{A}, \mathbb{K}_{A})[-1] \ar[r, "\theta \circ -"] & \Hom_{\Sh(\mathbb{R}^n \times \mathbb{R})}(\mathbb{K}_{A}, \mathbb{K}_{B}[n]) [-1].
\end{tikzcd}
\end{equation*}
By the definition of $\theta$, the morphism $(\Gamma_{\{f \geq 0 \}} \mathbb{K}_A)_{(0,0)} \to (\Gamma_{\{ f \geq 0 \}}\mathbb{K}_B[n])_{(0,0)}$ is an isomorphism.
Therefore we have $\xi \notin \mathrm{SS}(\Cocone(\theta))$ and
\begin{equation*}
\mathrm{SS} ( \Cocone(\theta) ) \cap T^*_{(0,0)}(\mathbb{R}^n \times \mathbb{R}) \subset \{ a_1 dx_1 + \dots + a_{n+1} dt \mid a_{n+1} \leq 0 , {a_1}^2 + \dots + {a_n}^2 = {a_{n+1}}^2 \}.
\end{equation*}
The converse
\begin{equation*}
\mathrm{SS} ( \Cocone(\theta) ) \cap T^*_{(0,0)}(\mathbb{R}^n \times \mathbb{R}) \supset \{ a_1 dx_1 + \dots + a_{n+1} dt \mid a_{n+1} \leq 0 , {a_1}^2 + \dots + {a_n}^2 = {a_{n+1}}^2 \}
\end{equation*}
follows by a direct computation.
\end{enumerate}

\end{proof}

\begin{prop}
\label{prop3.5}
For $i \in \mathbb{Z}_{>0}$ odd, there is an equality,
\begin{equation*}
\begin{split}
\mathrm{SS} (\mathbb{K}_{Z_i}) \cap \dot{T}^*_{(x,x,(i-1)\pi)}(S^n \times S^n \times \mathbb{R})&= \left\{ \sum_k \xi_k \middle| \xi_k \in \Lambda_\phi \cap \dot{T}^*_{(x,x,(i-1)\pi)}(S^n \times S^n \times \mathbb{R})  \right\}, \\
\mathrm{SS} (\mathbb{K}_{Z_i}) \cap \dot{T}^*_{(x,-x,i\pi)}(S^n \times S^n \times \mathbb{R}) &= \left\{ \sum_k \xi_k \middle| \xi_k \in \Lambda_\phi \cap \dot{T}^*_{(x,-x,i\pi)}(S^n \times S^n \times \mathbb{R}) \right\}, \\
\mathrm{SS} (\mathbb{K}_{Z_i}) \cap \dot{T}^*_{(x,y,t)}(S^n \times S^n \times \mathbb{R}) &= \Lambda_{\phi} \cap \dot{T}^*_{(x,y,t)}(S^n \times S^n \times \mathbb{R} ) \ \text{(if $(i-1)\pi < t < i\pi$)}, \\
\mathrm{SS} (\mathbb{K}_{Z_i}) \cap \dot{T}^*_{(x,y,t)}(S^n \times S^n \times \mathbb{R}) &= \emptyset \ \text{(otherwise)}.
\end{split}
\end{equation*}
For $i \in \mathbb{Z}_{>0}$ even, there is an equality,
\begin{equation*}
\begin{split}
\mathrm{SS} (\mathbb{K}_{Z_i}) \cap \dot{T}^*_{(x,-x,(i-1)\pi)}(S^n \times S^n \times \mathbb{R})&= \left\{ \sum_k \xi_k \middle| \xi_k \in \Lambda_\phi \cap \dot{T}^*_{(x,-x,(i-1)\pi)}(S^n \times S^n \times \mathbb{R})  \right\}, \\
\mathrm{SS} (\mathbb{K}_{Z_i}) \cap \dot{T}^*_{(x,x,i\pi)}(S^n \times S^n \times \mathbb{R}) &= \left\{ \sum_k \xi_k \middle| \xi_k \in \Lambda_\phi \cap \dot{T}^*_{(x,x,i\pi)}(S^n \times S^n \times \mathbb{R}) \right\}, \\
\mathrm{SS} (\mathbb{K}_{Z_i}) \cap \dot{T}^*_{(x,y,t)}(S^n \times S^n \times \mathbb{R}) &= \Lambda_{\phi} \cap \dot{T}^*_{(x,y,t)}(S^n \times S^n \times \mathbb{R} )  \ \text{(if $(i-1)\pi<t<i\pi$)}, \\
\mathrm{SS} (\mathbb{K}_{Z_i}) \cap \dot{T}^*_{(x,y,t)}(S^n \times S^n \times \mathbb{R}) &= \emptyset \ \text{(otherwise)}.
\end{split}
\end{equation*}
\end{prop}
\begin{proof}
It follows from the definition of $Z_i$ and Lemma \ref{lem3.4}.
\end{proof}

\begin{prop}
\label{prop3.6}
For $i \in \mathbb{Z}_{>0}$ odd, there is an equality,
\begin{equation*}
\mathrm{SS} (\Cocone(\psi_i)) \cap \dot{T}^*_{(x,-x,i\pi)} (S^n \times S^n \times \mathbb{R} ) = \Lambda_\phi \cap \dot{T}^*_{(x,-x,i\pi)}(S^n \times S^n \times \mathbb{R}).
\end{equation*}
For $i \in \mathbb{Z}_{>0}$ even, there is an equality,
\begin{equation*}
\mathrm{SS} (\Cocone(\psi_i)) \cap \dot{T}^*_{(x,x,i\pi)} (S^n \times S^n \times \mathbb{R} ) = \Lambda_\phi \cap \dot{T}^*_{(x,x,i\pi)}(S^n \times S^n \times \mathbb{R}).
\end{equation*}
\end{prop}
\begin{proof}
We only prove the case of $i=1$ since the proof for $i > 1$ is similar.
Fix a point $x_0 \in S^n$.
Set $W : = \{x_0\} \times S^n \times (\pi-\epsilon, \pi+\epsilon)$ and $W' : = \{x_0\} \times B(-x_0;\epsilon) \times (\pi-\epsilon, \pi+\epsilon)$.
Since the microsupport is a local invariant, it suffices to work on a sufficiently small neighborhood of each point. 
Restricting to a neighborhood $B(x_0;\epsilon)$ of $x_0$, the subset $\bigcup_{x\in B(x_0;\epsilon)} \{x\}\times\{(y,t)\in S^n\times\mathbb{R}\mid \mathrm{dist}(x,y)<t\}$ is diffeomorphic to $\bigcup_{x\in B(x_0;\epsilon)}\{x\}\times\{(y,t)\in S^n\times\mathbb{R}\mid \mathrm{dist}(x_0,y)<t\}$.
Hence, the problem is locally reduced to the case where the first component is fixed.
Then it suffices to prove the equality $\mathrm{SS} (\Cocone(\psi_1 |_{W'})) \cap \dot{T}^*_{(x_0,-x_0,\pi)}W' = \{((-x_0,\xi) , (\pi, -|\xi|_{g^*})) \in \dot{T}^*_{-x_0}S^n \times T^*_{\pi}\mathbb{R}  \mid (-x_0, \xi) \in \dot{T}^*_{-x_0}S^n  \}$.
We have the following commutative diagram:
\begin{equation*}
\begin{tikzcd}[column sep=tiny]
\Hom(\mathbb{K}_{Z_1},\mathbb{K}_{Z_2}) \ar[r] \ar[d] & \Hom(\mathbb{K}_{Z_1}, \mathbb{K}_{S^n \times S^n \times \mathbb{R}}) \ar[r] \ar[d] & \Hom (\mathbb{K}_{Z_1}, \mathbb{K}_{{Z_2}^c}) \ar[d] \ar[r, "+1"] & \   \\
\Hom(\mathbb{K}_{Z_1}|_W,\mathbb{K}_{Z_2}|_W) \ar[r] \ar[d] & \Hom(\mathbb{K}_{Z_1}|_W, \mathbb{K}_{S^n \times S^n \times \mathbb{R}}|_W) \ar[r] \ar[d] & \Hom(\mathbb{K}_{Z_1}|_W, \mathbb{K}_{{Z_2}^c}|_W) \ar[d] \ar[r, "+1"] & \   \\
\Hom(\mathbb{K}_{Z_1}|_{W'},\mathbb{K}_{Z_2}|_{W'}) \ar[r] & \Hom(\mathbb{K}_{Z_1}|_{W'}, \mathbb{K}_{S^n \times S^n \times \mathbb{R}}|_{W'}) \ar[r] & \Hom(\mathbb{K}_{Z_1}|_{W'}, \mathbb{K}_{{Z_2}^c}|_{W'}) \ar[r, "+1"] & \ .   
\end{tikzcd}
\end{equation*}
The vertical arrows are the restriction maps. By Proposition \ref{prop2.7}, this diagram can be seen as the following commutative diagram:
\begin{equation*}
\begin{tikzcd}[column sep=tiny]
C^*( S^n \times S^n , S^n \times S^n \setminus -\Delta_{S^n}  ) \ar[r] \ar[d,"{i_1}^*"] & C^*(S^n \times S^n) \ar[r] \ar[d] & C^*( S^n \times S^n \setminus -\Delta_{S^n} ) \ar[d] \ar[r, "+1"] & \   \\
C^* (  S^n,  S^n \setminus \{-x_0\} ) \ar[r] \ar[d,"{i_2}^*"] & C^*( S^n ) \ar[r] \ar[d] & C^*( S^n \setminus \{-x_0\}  ) \ar[d] \ar[r, "+1"] & \   \\
C^*( B(-x_0; \epsilon), B(-x_0; \epsilon) \setminus \{-x_0\} ) \ar[r] & C^*(  B(-x_0; \epsilon )  ) \ar[r] & C^*( B(-x_0; \epsilon) \setminus \{-x_0\} ) \ar[r, "+1"] & \ 
\end{tikzcd}
\end{equation*}
where $i_1 \colon  S^n \to S^n \times S^n$ is the continuous map defined by $i_1(y)=(x_0,y)$ and $i_2 \colon  \overline{B(-x_0;\epsilon)} \to S^n$ is the inclusion map.
Taking its cohomology, we have the following commutative diagram:
\begin{equation*}
\begin{tikzcd}[column sep=tiny]
{H}^n(S^n \times S^n , S^n \times S^n \setminus -\Delta_{S^n} ) \ar[r] \ar[d,"H^n({i_1}^*)"] & H^n(S^n \times S^n) \ar[r] \ar[d] & H^n( S^n \times S^n \setminus -\Delta_{S^n}  ) \ar[d]  \\
{H}^n( S^n,  S^n \setminus \{-x_0\} ) \ar[r] \ar[d,"H^n({i_2}^*)"] & H^n(S^n) \ar[r] \ar[d] & H^n( S^n \setminus \{-x_0\}  )  \ar[d]  \\
{H}^n( B(-x_0; \epsilon ) , B(-x_0; \epsilon) \setminus \{-x_0\}  ) \ar[r] & H^n( B(-x_0;\epsilon)  ) \ar[r] & H^n ( B(-x_0; \epsilon) \setminus \{-x_0\} )  .
\end{tikzcd}
\end{equation*}
By diagram chasing, it follows that $H^n({i_1}^*)$ is an isomorphism. By the excision theorem, it follows that $H^n({i_2}^*)$ is an isomorphism. 
Therefore $\psi_1 |_{W'} \in \Hom^0 _{\Sh(W')}  ( \mathbb{K}_{Z_1}|_{W'} , \mathbb{K}_{Z_2}[n] |_{W'} )$ is a morphism such that $ H^0(\psi_1 |_{W'}) \in \Ext^n(\mathbb{K}_{Z_1} |_{W'}, \mathbb{K}_{Z_2} |_{W'}) \cong \mathbb{K}$ is a generator. Since the local situation is the same as Lemma \ref{lem3.4}, it follows that $\mathrm{SS} (\Cocone(\psi_1 |_{W'})) \cap \dot{T}^*_{(x_0,-x_0,\pi)}W' = \{((-x_0,\xi) , (\pi, -|\xi|_{g^*})) \in \dot{T}^*_{-x_0}S^n \times T^*_{\pi}\mathbb{R}  \mid (-x_0, \xi) \in \dot{T}^*_{-x_0}S^n  \}$.
\end{proof}

We define the sheaf $K_{+} \in \Sh(S^n \times S^n \times \mathbb{R})$ as follows.

For $i \in \mathbb{Z}_{>0}$, we define the sheaf $K_i \in \Sh(S^n \times S^n \times \mathbb{R})$ by
\begin{equation*}
\begin{split}
K_1& :  =\Cocone(\mathbb{K}_{Z_1} \xrightarrow{\psi_1} \mathbb{K}_{Z_2}[n]), \\
K_2& :  =\Cocone(K_1 \xrightarrow{\tilde{\psi}_2} \mathbb{K}_{Z_3}[2n-1]), \\
K_3& :  =\Cocone(K_2 \xrightarrow{\tilde{\psi}_3} \mathbb{K}_{Z_4}[3n-2]), \\
K_4& :  =\Cocone(K_3 \xrightarrow{\tilde{\psi}_4} \mathbb{K}_{Z_5}[4n-3]), \\
&\dots \\
K_i& :  =\Cocone(K_{i-1} \xrightarrow{\tilde{\psi}_i} \mathbb{K}_{Z_{i+1}}[in-(i-1)]) ,
\end{split}
\end{equation*}
where $\tilde{\psi}_i$ is the morphism induced by $\psi_i$.
Note that we have the following isomorphisms by induction.
\begin{enumerate}
\item [(i)] the case of $i=2$
\begin{equation*}
\begin{split}
&\Hom_{\Sh(S^n \times S^n \times \mathbb{R})}(K_1 , \mathbb{K}_{Z_{3} }[2n-1]) \\
&\cong \Cone(\Hom_{\Sh(S^n \times S^n \times \mathbb{R})}(\mathbb{K}_{Z_{2}}[n], \mathbb{K}_{Z_{3}}[2n-1]) \to \Hom_{\Sh(S^n \times S^n \times \mathbb{R})}(\mathbb{K}_{Z_1}, \mathbb{K}_{Z_{3}}[2n-1])) \\
&\cong \Cone(\Hom_{\Sh(S^n \times S^n \times \mathbb{R})}(\mathbb{K}_{Z_{2}}[n], \mathbb{K}_{Z_{3}}[2n-1])\to 0) \\
&\cong \Hom_{\Sh(S^n \times S^n \times \mathbb{R})}(\mathbb{K}_{Z_{2}}[n], \mathbb{K}_{Z_{3}}[2n-1])[1] \\
&\cong \Hom_{\Sh(S^n \times S^n \times \mathbb{R})}(\mathbb{K}_{Z_{2}}, \mathbb{K}_{Z_{3}}[n]) .
\end{split}
\end{equation*}
\item [(ii)] the case of $i>2$
\begin{equation*}
\begin{split}
&\Hom_{\Sh(S^n \times S^n \times \mathbb{R})}(K_{i-1} , \mathbb{K}_{Z_{i+1} }[in-(i-1)]) \\
&\cong \Cone(\Hom_{\Sh(S^n \times S^n \times \mathbb{R})}(\mathbb{K}_{Z_{i}}[(i-1)n-(i-2)], \mathbb{K}_{Z_{i+1}}[in-(i-1)]) \\
&\qquad \to \Hom_{\Sh(S^n \times S^n \times \mathbb{R})}(K_{i-2}, \mathbb{K}_{Z_{i+1}}[in-(i-1)])) \\
&\cong \Cone(\Hom_{\Sh(S^n \times S^n \times \mathbb{R})}(\mathbb{K}_{Z_{i}}[(i-1)n-(i-2)], \mathbb{K}_{Z_{i+1}}[in-(i-1)])\to 0) \\
&\cong \Hom_{\Sh(S^n \times S^n \times \mathbb{R})}(\mathbb{K}_{Z_{i}}[(i-1)n-(i-2)], \mathbb{K}_{Z_{i+1}}[in-(i-1)])[1] \\
&\cong \Hom_{\Sh(S^n \times S^n \times \mathbb{R})}(\mathbb{K}_{Z_{i}}, \mathbb{K}_{Z_{i+1}}[n]) .
\end{split}
\end{equation*}
\end{enumerate}
Finally we obtain the sequence in $ \Sh(S^n \times S^n \times \mathbb{R})$,
\begin{equation*}
K_1 \leftarrow K_2 \leftarrow K_3 \leftarrow K_4 \leftarrow \cdots .
\end{equation*}
We define $K_{+} :  = \mathrm{holim}_{i>0} {K_i}$. Note that $\mathrm{SS}(K_+)$ and $\Lambda_\phi$ coincide in $\dot{T}^*(S^n \times S^n \times \mathbb{R}_{>0})$.

In parallel to the construction of $K_+$, we define the sheaf $K_{-} \in \Sh(S^n \times S^n \times \mathbb{R}) $ as follows.

\begin{enumerate}

\item
For $i \in \mathbb{Z}_{<0}$, we define the closed subset $Z_i \subset S^n \times S^n \times \mathbb{R}$ by
\begin{equation*}
Z_{i} :  =\{(x,y,t)\in S^n \times S^n \times \mathbb{R} \mid \mathrm{dist}(x,(-1)^{i-1}y)\leq -t+(i+1)\pi \}.
\end{equation*}

\item
For $i \in \mathbb{Z}_{<0}$, we choose a morphism $\psi_i \in \Hom^0_{\Sh(S^n \times S^n \times \mathbb{R})} ( \mathbb{K}_{Z_{i-1}} , \mathbb{K}_{Z_i}[n] ) $ such that $H^0(\psi_i) \in \Ext^n(\mathbb{K}_{Z_{i-1}}, \mathbb{K}_{Z_i}) \cong \mathbb{K}$ is a generator.

\item
We obtain the sequence in $ \Sh(S^n \times S^n \times \mathbb{R})$,
\begin{equation*}
\cdots \xrightarrow{\psi_{-4}} \mathbb{K}_{Z_{-4}}[-3n] \xrightarrow{\psi_{-3}} \mathbb{K}_{Z_{-3}}[-2n] \xrightarrow{\psi_{-2}} \mathbb{K}_{Z_{-2}}[-n] \xrightarrow{\psi_{-1}} \mathbb{K}_{Z_{-1}} .
\end{equation*}

\item
For $i \in \mathbb{Z}_{<0}$, we define the sheaf $K_i \in \Sh(S^n \times S^n \times \mathbb{R})$ by
\begin{equation*}
\begin{split}
K_{-1}& :  =\Cone(\mathbb{K}_{Z_{-2}}[-n] \xrightarrow{\psi_{-1}} \mathbb{K}_{Z_{-1}}),\\
K_{-2}& :  =\Cone(\mathbb{K}_{Z_{-3}}[-2n+1] \xrightarrow{\tilde{\psi}_{-2}} K_{-1} ), \\
K_{-3}& :  =\Cone( \mathbb{K}_{Z_{-4}}[-3n+2] \xrightarrow{\tilde{\psi}_{-3}} K_{-2} ), \\
K_{-4}& :  =\Cone(\mathbb{K}_{Z_{-5}}[-4n+3] \xrightarrow{\tilde{\psi}_{-4}} K_{-3} ), \\
&\dots \\
K_i& :  =\Cone(\mathbb{K}_{Z_{i-1}}[in+(-i-1)] \xrightarrow{\tilde{\psi}_i} K_{i+1}),
\end{split}
\end{equation*}
where $ \tilde{\psi}_i $ is the morphism induced by $\psi_i$.

\item
We obtain the sequence in $\Sh(S^n \times S^n \times \mathbb{R})$,
\begin{equation*}
\cdots \leftarrow K_{-4} \leftarrow K_{-3} \leftarrow K_{-2} \leftarrow K_{-1} .
\end{equation*}

\item
We define $K_{-} :  = \mathrm{hocolim}_{i<0}K_i$. Note that $\mathrm{SS}(K_-)$ and $\Lambda_\phi$ coincide in $\dot{T}^*(S^n \times S^n \times \mathbb{R}_{<0})$.

\end{enumerate}

\begin{lem}
\label{lem3.7}
The object $ \Hom_{\Sh(S^n \times S^n \times \mathbb{R})}( K_{-}, K_{+}) $ is isomorphic to $ C^*(S^n) \otimes C^*(  \mathbb{R}^{n+1} , S^{n-1})[-1] $ in $\Mod(\mathbb{K})$.
\end{lem}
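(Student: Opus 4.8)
The plan is to localize the computation at the copy of $S^n$ sitting inside $S^n\times S^n\times\mathbb{R}$ as $\Delta_{S^n}\times\{0\}$, where the sheaf $\mathcal{H}om(K_-,K_+)$ turns out to be constant, and then to read off $\Hom(K_-,K_+)=R\Gamma(S^n\times S^n\times\mathbb{R};\mathcal{H}om(K_-,K_+))$ by the projection formula. The two genuinely delicate steps are controlling the supports through the homotopy (co)limits and identifying the local model near $\Delta_{S^n}\times\{0\}$; the rest is triangle-chasing in the spirit of Lemmas \ref{lem3.4} and \ref{lem3.6}.

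First I would control the supports. For $i>0$ one has $\mathrm{supp}(\mathbb{K}_{Z_i})=\overline{Z_i}\subset\{t\ge(i-1)\pi\}$, with $\overline{Z_1}=\{(x,y,t):\mathrm{dist}(x,y)\le t\}$, so $\overline{Z_1}\cap\{t=0\}=\Delta_{S^n}\times\{0\}$ and $\overline{Z_i}\cap\{t=0\}=\varnothing$ for $i\ge2$; symmetrically $Z_i\subset\{t\le(i+1)\pi\}$ for $i<0$, with $Z_{-1}\cap\{t=0\}=\Delta_{S^n}\times\{0\}$ and $Z_i\cap\{t=0\}=\varnothing$ for $i\le-2$. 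Each $K_i$ is an iterated mapping cone of shifts of the $\mathbb{K}_{Z_j}$, and a homotopy (co)limit of a tower is a (co)cone of a map between (co)products, whose support lies in the closure of the union of the individual supports; hence $\mathrm{supp}(K_+)\subset\{t\ge0\}$, $\mathrm{supp}(K_-)\subset\{t\le0\}$, and $\mathrm{supp}(K_\pm)\cap\{t=0\}\subset\Delta_{S^n}\times\{0\}$. Since $\mathcal{H}om(F,G)|_U=0$ whenever $F|_U=0$ or $G|_U=0$, this gives $\mathrm{supp}\,\mathcal{H}om(K_-,K_+)\subset\mathrm{supp}(K_-)\cap\mathrm{supp}(K_+)\subset\Delta_{S^n}\times\{0\}$, and therefore, writing $\iota:\Delta_{S^n}\times\{0\}\hookrightarrow S^n\times S^n\times\mathbb{R}$, we have $\Hom(K_-,K_+)\cong R\Gamma(\Delta_{S^n}\times\{0\};\iota^{-1}\mathcal{H}om(K_-,K_+))$.

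Next I would identify $\iota^{-1}\mathcal{H}om(K_-,K_+)$. On the neighbourhood $V:=S^n\times S^n\times(-\pi,\pi)$ of $\Delta_{S^n}\times\{0\}$ every $Z_j$ with $|j|\ge2$ is empty, so the iterated cones collapse and, since restriction commutes with holim and hocolim, $K_+|_V\cong\mathbb{K}_{Z_1}|_V$ and $K_-|_V\cong\mathbb{K}_{Z_{-1}}|_V$; hence $\mathcal{H}om(K_-,K_+)|_V\cong\mathcal{H}om(\mathbb{K}_{Z_{-1}},\mathbb{K}_{Z_1})$. Exactly as in the proofs of Lemmas \ref{lem3.4} and \ref{lem3.6}, near a point $(x_0,x_0,0)$ one can choose coordinates $(u,v,t)$ (geodesic normal coordinates on the second factor together with an orthonormal frame) in which $S^n\times S^n\times\mathbb{R}$ becomes $\mathbb{R}^n_u\times\mathbb{R}^n_v\times\mathbb{R}_t$ and $(Z_{-1},Z_1)$ becomes $\mathbb{R}^n_u\times(\{|v|\le-t\},\{|v|<t\})$, with $\Delta_{S^n}\times\{0\}$ corresponding to $\mathbb{R}^n_u\times\{0\}\times\{0\}$. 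Since the closed cone $\{|v|\le-t\}$ and the open cone $\{|v|<t\}$ meet only at the origin, $\mathcal{H}om(\mathbb{K}_{\{|v|\le-t\}},\mathbb{K}_{\{|v|<t\}})$ is a skyscraper at $0\in\mathbb{R}^n_v\times\mathbb{R}_t$ with stalk $\mathcal{E}:=\Hom_{\Sh(\mathbb{R}^n\times\mathbb{R})}(\mathbb{K}_{\{|v|\le-t\}},\mathbb{K}_{\{|v|<t\}})$; as $Z_{\pm1}$ do not involve the $u$-coordinate, $\mathcal{H}om(K_-,K_+)|_V$ is $\mathbb{K}_{\mathbb{R}^n_u}\boxtimes(\text{this skyscraper})$, i.e. the constant sheaf $\underline{\mathcal{E}}$ along $\Delta_{S^n}\times\{0\}$ extended by zero. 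These local descriptions glue, and—$S^n$ being simply connected for $n\ge2$, and in all cases because the construction is $O(n+1)$-equivariant—$\iota^{-1}\mathcal{H}om(K_-,K_+)$ is the constant sheaf $\underline{\mathcal{E}}$ on $S^n$. By the projection formula, $\Hom(K_-,K_+)\cong R\Gamma(S^n;\underline{\mathcal{E}})\cong C^*(S^n)\otimes\mathcal{E}$.

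It then remains to compute $\mathcal{E}$, which is the computation of Lemma \ref{lem3.4} in a slightly different shape. Put $A:=\{|v|\le-t\}$ and $B:=\{|v|<t\}$ in $\mathbb{R}^n\times\mathbb{R}\cong\mathbb{R}^{n+1}$; the open set $A^c=\{|v|>-t\}$ and the closed set $B^c=\{|v|\ge t\}$ are star-shaped, hence contractible, while $A^c\cap B^c=\{(v,t):-|v|<t\le|v|\}$ deformation retracts onto $\{(v,|v|):v\ne0\}$, which is homotopy equivalent to $S^{n-1}$. Applying $R\Gamma$ to the triangle $\mathbb{K}_B\to\mathbb{K}_{\mathbb{R}^{n+1}}\to\mathbb{K}_{B^c}\xrightarrow{+1}$ of Proposition \ref{prop2.5} and using contractibility of $B^c$ gives $\Hom(\mathbb{K}_{\mathbb{R}^{n+1}},\mathbb{K}_B)\cong\Cocone(C^*(\mathbb{R}^{n+1})\to C^*(B^c))\cong0$, so applying $\Hom(-,\mathbb{K}_B)$ to $\mathbb{K}_{A^c}\to\mathbb{K}_{\mathbb{R}^{n+1}}\to\mathbb{K}_A\xrightarrow{+1}$ yields $\Hom(\mathbb{K}_A,\mathbb{K}_B)\cong\Hom(\mathbb{K}_{A^c},\mathbb{K}_B)[-1]$. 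Applying $\Hom(\mathbb{K}_{A^c},-)$ to $\mathbb{K}_B\to\mathbb{K}_{\mathbb{R}^{n+1}}\to\mathbb{K}_{B^c}\xrightarrow{+1}$ and using Proposition \ref{prop2.6} gives $\Hom(\mathbb{K}_{A^c},\mathbb{K}_B)\cong\Cocone(C^*(A^c)\to C^*(A^c\cap B^c))\cong\Cocone(C^*(\mathbb{R}^{n+1})\to C^*(S^{n-1}))\cong C^*(\mathbb{R}^{n+1},S^{n-1})$ by Proposition \ref{prop2.7}. Hence $\mathcal{E}\cong C^*(\mathbb{R}^{n+1},S^{n-1})[-1]$, and combining with the previous step, $\Hom_{\Sh(S^n\times S^n\times\mathbb{R})}(K_-,K_+)\cong C^*(S^n)\otimes C^*(\mathbb{R}^{n+1},S^{n-1})[-1]$. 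The hard part is the first half of the argument (the support bookkeeping and the local triviality of $\mathcal{H}om(K_-,K_+)|_{\Delta_{S^n}\times\{0\}}$); once the problem is localized at $\Delta_{S^n}\times\{0\}$, the computation of $\mathcal{E}$ is routine.
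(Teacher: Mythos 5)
Your argument is correct, and it follows the same overall strategy as the paper — reduce everything to $\Hom(\mathbb{K}_{Z_{-1}},\mathbb{K}_{Z_1})$ by support considerations and compute that — but the mechanism of the reduction is genuinely different in a way that is worth noting. The paper works at the level of the $\Hom$ complex: it uses that $\Hom$ sends the defining homotopy (co)limits to homotopy limits, kills all the off-diagonal terms $\Hom(\mathbb{K}_{Z_i},\mathbb{K}_{Z_j})$ for $(i,j)\neq(-1,1)$ because their supports are disjoint, and then invokes a ``coordinate transformation'' to replace the pair $(Z_{-1},Z_1)$ by a product $S^n\times(A,B)$ so that the K\"unneth formula applies. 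You instead work at the level of the sheaf $\mathcal{H}om(K_-,K_+)$: you localize its support to $\Delta_{S^n}\times\{0\}$, show that over a collar $V=S^n\times S^n\times(-\pi,\pi)$ the (co)limits collapse so that $K_\pm|_V\cong\mathbb{K}_{Z_{\pm1}}|_V$ (correctly using that $j^{-1}$ for an open embedding is both a left and right adjoint, hence preserves holim and hocolim), identify $\iota^{-1}\mathcal{H}om(K_-,K_+)$ as a locally constant sheaf on $S^n$ with stalk $\mathcal{E}=\Hom(\mathbb{K}_A,\mathbb{K}_B)$, and then conclude.

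Your route buys one real advantage: the paper's ``coordinate transformation'' from $(Z_{-1},Z_1)$ to $S^n\times(A,B)$ is only literally a change of coordinates if $TS^n$ is trivial, which fails for most $n$; in general $(Z_{-1},Z_1)$ is a bundle of pairs over $\Delta_{S^n}$ with structure group $O(n)$, not a product. Your localization-to-a-local-system argument correctly identifies where the issue lives: one must check that the rank-one local system $H^{n+1}\bigl(\iota^{-1}\mathcal{H}om(K_-,K_+)\bigr)$ on $S^n$ is constant. Simple connectedness handles $n\geq2$; for $n=1$ your $O(n+1)$-equivariance argument does work (the boundary map $\pi_1(S^1)\to\pi_0(O(1))$ in the fibration $O(1)\to O(2)\to S^1$ is zero, so the associated local system has trivial monodromy), and an even more direct argument is that $TS^n$ is orientable so the structure group reduces to $SO(n)$, which acts trivially on $H^n(\mathbb{R}^{n+1},S^{n-1})$. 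I would state one of these explicitly rather than leaving it as an aside. Your final computation of $\mathcal{E}\cong C^*(\mathbb{R}^{n+1},S^{n-1})[-1]$ via the two distinguished triangles is a clean rearrangement of the paper's steps and is correct; the identification of the resulting pair $(A^c,A^c\cap B^c)$ with $(\mathbb{R}^{n+1},S^{n-1})$ up to homotopy equivalence of pairs is as in the paper.
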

\begin{proof}
Set $A :  =\{(x,t) \in \mathbb{R}^n \times \mathbb{R} \mid |x| \leq -t \}$ and $ B :  =\{(x,t) \in \mathbb{R}^n \times \mathbb{R} \mid |x| < t \}$. 
We have the following chain of isomorphisms,
\begin{equation*}
\begin{split}
&\Hom_{\Sh(S^n \times S^n \times \mathbb{R})}(K_{-} , K_{+})  \\
&\cong \Hom_{\Sh(S^n \times S^n \times \mathbb{R})} (\mathrm{hocolim}_{i < 0} K_{i}, \mathrm{holim}_{j > 0} K_{j}) \\
&\cong \mathrm{holim}_{i < 0} \mathrm{holim}_{j > 0}  \Hom_{\Sh(S^n \times S^n \times \mathbb{R})} ( K_i,K_j) \\
&\cong \Hom_{\Sh(S^n \times S^n \times \mathbb{R})}(K_{-1}, K_{1}) \\
&\cong \Hom_{\Sh(S^n \times S^n \times \mathbb{R})}(\Cone(\mathbb{K}_{Z_{-2}} \xrightarrow{\psi_{-1}}\mathbb{K}_{Z_{-1}}), \Cocone(\mathbb{K}_{Z_{1}} \xrightarrow{\psi_1} \mathbb{K}_{Z_2})) \\
&\cong \Hom_{\Sh(S^n \times S^n \times \mathbb{R})}(\mathbb{K}_{Z_{-1}}, \mathbb{K}_{Z_{1}}) \\
&\cong \Hom_{\Sh(S^n \times \mathbb{R}^n \times \mathbb{R})}(\mathbb{K}_{S^n \times A}, \mathbb{K}_{S^n \times B}) \\
&\cong C^*(S^n) \otimes \Hom_{\Sh(\mathbb{R}^n \times \mathbb{R})}(\mathbb{K}_{A}, \mathbb{K}_{B}) \\
&\cong C^*(S^n) \otimes \Cocone( \Hom_{\Sh(\mathbb{R}^n \times \mathbb{R})}(\mathbb{K}_{\mathbb{R}^{n+1}}, \mathbb{K}_{B}) \to \Hom_{\Sh(\mathbb{R}^n \times \mathbb{R})}(\mathbb{K}_{\mathbb{R}^{n+1} \setminus A }, \mathbb{K}_{B})) \\
&\cong C^*(S^n) \otimes \Cocone( 0 \to \Hom_{\Sh(\mathbb{R}^n \times \mathbb{R})}(\mathbb{K}_{\mathbb{R}^{n+1}\setminus A}, \mathbb{K}_{B})) \\
&\cong C^*(S^n) \otimes \Hom_{\Sh(\mathbb{R}^n \times \mathbb{R})}(\mathbb{K}_{\mathbb{R}^{n+1} \setminus A}, \mathbb{K}_{B})[-1] \\
&\cong C^*(S^n) \otimes \Cocone( \Hom_{\Sh(\mathbb{R}^n \times \mathbb{R})}(\mathbb{K}_{\mathbb{R}^{n+1} \setminus A}, \mathbb{K}_{\mathbb{R}^{n+1}}) \to \Hom_{\Sh(\mathbb{R}^n \times \mathbb{R})}(\mathbb{K}_{\mathbb{R}^{n+1} \setminus A}, \mathbb{K}_{\mathbb{R}^{n+1} \setminus B}))[-1] \\
&\cong C^*(S^n) \otimes \Cocone( C^*(\mathbb{R}^{n+1} \setminus A) \to C^*( (\mathbb{R}^{n+1} \setminus A) \cap ( \mathbb{R}^{n+1} \setminus B))[-1] \\
&\cong C^*(S^n) \otimes C^*( \mathbb{R}^{n+1} \setminus A ,  (\mathbb{R}^{n+1} \setminus A) \cap ( \mathbb{R}^{n+1} \setminus B )  )  [-1] \\
&\cong C^*(S^n) \otimes C^*(\mathbb{R}^{n+1}, S^{n-1})[-1] .
\end{split}
\end{equation*}
The first isomorphism follows by the definition of $K_{-}$ and $K_{+}$.
The second follows from the fact that the $\Hom$-functor preserves homotopy limits.
The third follows from $ \Hom_{\Sh(S^n \times S^n \times \mathbb{R})} (  \mathbb{K}_{Z_i},  \mathbb{K}_{Z_j} ) \cong 0 $ if $ (i,j) \in \mathbb{Z}_{<0} \times \mathbb{Z}_{>0} \setminus \{(-1,1)\} $ and the following diagram:
\begin{equation*}
\begin{tikzcd}[column sep=tiny]
\  & \ \ar[d,"\cong"] & \ \ar[d,"\cong"] & \ \ar[d,"\cong"] \\
\ \ar[r,"\cong"] & \Hom_{\Sh(S^n \times S^n \times \mathbb{R})}(K_{-3},K_3) \ar[r,"\cong"] \ar[d,"\cong"] & \Hom_{\Sh(S^n \times S^n \times \mathbb{R})}(K_{-2},K_3) \ar[r,"\cong"] \ar[d,"\cong"] & \Hom_{\Sh(S^n \times S^n \times \mathbb{R})}(K_{-1},K_3) \ar[d,"\cong"]  \\
\ \ar[r,"\cong"] & \Hom_{\Sh(S^n \times S^n \times \mathbb{R})}(K_{-3},K_2) \ar[r,"\cong"] \ar[d,"\cong"] & \Hom_{\Sh(S^n \times S^n \times \mathbb{R})}(K_{-2},K_2) \ar[r,"\cong"] \ar[d,"\cong"] & \Hom_{\Sh(S^n \times S^n \times \mathbb{R})}(K_{-1},K_2) \ar[d,"\cong"]  \\
\ \ar[r,"\cong"] & \Hom_{\Sh(S^n \times S^n \times \mathbb{R})}(K_{-3},K_1) \ar[r,"\cong"] & \Hom_{\Sh(S^n \times S^n \times \mathbb{R})}(K_{-2},K_1) \ar[r,"\cong"] & \Hom_{\Sh(S^n \times S^n \times \mathbb{R})}(K_{-1},K_1) . \\ 
\end{tikzcd}
\end{equation*}
The fourth follows from the definition of $K_{-1}$ and $K_{1}$.
The fifth follows from $ \Hom_{\Sh(S^n \times S^n \times \mathbb{R})} (  \mathbb{K}_{Z_i},  \mathbb{K}_{Z_j} ) \cong 0 $ if $ (i,j) \in \mathbb{Z}_{<0} \times \mathbb{Z}_{>0} \setminus \{(-1,1)\} $.
The sixth follows from the coordinate transformation.
The seventh follows from the K\"{u}nneth formula.
The rest follow from Proposition \ref{prop2.5}, \ref{prop2.6}, \ref{prop2.7}, and the excision theorem.
\end{proof}

\begin{lem}
\label{lem3.8}
Set $ A :  ={\{(x,t) \in \mathbb{R}^n \times \mathbb{R} \mid |x| \leq -t\}}$, $ B :  =\{(x,t) \in \mathbb{R}^n \times \mathbb{R} \mid |x| < t\}$.
Let $ \theta \in \Hom^0_{\Sh(\mathbb{R}^n \times \mathbb{R})} ( \mathbb{K}_A , \mathbb{K}_B[n+1] ) $ be a morphism such that $H^0(\theta) \in \Ext^{n+1}(\mathbb{K}_A, \mathbb{K}_B) \cong \mathbb{K}$ is a generator. Then 
\begin{equation*}
\begin{split}
\mathrm{SS} ( \mathbb{K}_A ) \cap T^*_{(0,0)}(\mathbb{R}^n \times \mathbb{R}) = \{ a_1 dx_1 + \dots  + a_{n+1} dt \mid a_{n+1} \leq 0 , {a_1}^2 + \dots + {a_n}^2 \leq {a_{n+1}}^2 \}, \\
\mathrm{SS} (\mathbb{K}_B ) \cap T^*_{(0,0)}(\mathbb{R}^n \times \mathbb{R})= \{ a_1 dx_1 + \dots + a_{n+1} dt \mid a_{n+1} \leq 0 , {a_1}^2 + \dots + {a_n}^2 \leq {a_{n+1}}^2 \}, \\
\mathrm{SS} ( \Cocone(\theta) ) \cap T^*_{(0,0)}(\mathbb{R}^n \times \mathbb{R} )= \{ a_1 dx_1 + \dots + a_{n+1} dt \mid a_{n+1} \leq 0 , {a_1}^2 + \dots + {a_n}^2 = {a_{n+1}}^2 \}.
\end{split}
\end{equation*}
\end{lem}
\begin{proof}

The proofs for $\mathrm{SS}(\mathbb{K}_A)$ and $\mathrm{SS}(\mathbb{K}_B)$ are the same as Lemma \ref{lem3.4}.
We will prove the equality for $\mathrm{SS}(\Cocone(\theta))$. 
Consider $\xi= a_1d x_1+\dots+a_{n+1}dt  \in T^*_{(0,0)}(\mathbb{R}^n \times \mathbb{R} )$ and a $C^\infty$-function $f\colon \mathbb{R}^n \times \mathbb{R} \to \mathbb{R}$ such that  $f(0,0)=0$ and $d_{(0,0)}f=\xi$. 
Let $i \colon  \{f \geq 0\} \to \mathbb{R}^n \times \mathbb{R}$ be the inclusion map.
If $a_{n+1} < 0 $ and $ {a_1}^2 + \dots + {a_n}^2 < {a_{n+1}}^2$, then there exists a sufficiently small $\epsilon>0$ such that $A \cap B(0;\epsilon) \subset \{f \geq 0\} \cap B(0;\epsilon) \subset B^c \cap B(0;\epsilon)$ and we have the following commutative diagram: 

\begin{equation*}
\begin{tikzcd}
(\Gamma_{\{f \geq 0 \}}\mathbb{K}_A)_{(0,0)}  \ar[r] \ar[d,"\cong"] & (\Gamma_{\{f \geq 0 \}}\mathbb{K}_B[n+1])_{(0,0)}  \ar[d,"\cong"] \\
\Gamma( B(0;\epsilon) ; \Gamma_{\{f \geq 0 \}}\mathbb{K}_A ) \ar[d,"\cong"]  & \Gamma( B(0;\epsilon) ; \Gamma_{\{f \geq 0 \}}\mathbb{K}_B[n+1])  \ar[d,"\cong"] \\
\Hom_{\Sh(\mathbb{R}^n \times \mathbb{R})}(\mathbb{K}_{ B(0;\epsilon) } , \Gamma_{ \{f \geq 0 \} }\mathbb{K}_A)  \ar[d,"\cong"]  & \Hom_{\Sh(\mathbb{R}^n \times \mathbb{R})}(\mathbb{K}_{ B(0;\epsilon) } , \Gamma_{\{ f \geq 0 \}} \mathbb{K}_B[n+1]) \ar[d,"\cong"] \\ 
\Hom_{\Sh(\mathbb{R}^n \times \mathbb{R})}(\mathbb{K}_{ B(0;\epsilon) } , i_*i^{-1}\mathbb{K}_A) \ar[d,"\cong"]  &  \Hom_{\Sh(\mathbb{R}^n \times \mathbb{R})}(\mathbb{K}_{B(0;\epsilon)} , i_*i^! \mathbb{K}_B[n+1] ) \ar[d,"\cong"] \\
\Hom_{\Sh(\mathbb{R}^n \times \mathbb{R})}(i_!i^{-1}\mathbb{K}_{ B(0;\epsilon) } , \mathbb{K}_A) \ar[d,"\cong"]  &  \Hom_{\Sh(\mathbb{R}^n \times \mathbb{R})}(i_!i^{-1}\mathbb{K}_{ B(0;\epsilon) } ,\mathbb{K}_B[n+1] )  \ar[d,"\cong"] \\
\Hom_{\Sh(\mathbb{R}^n \times \mathbb{R})}(\mathbb{K}_{  \{f \geq 0\}\cap B(0;\epsilon) } , \mathbb{K}_A) \ar[d,"\cong"]  &  \Hom_{\Sh(\mathbb{R}^n \times \mathbb{R})}(\mathbb{K}_{ \{f \geq 0\}\cap B(0;\epsilon) } ,\mathbb{K}_B[n+1] )  \ar[d,"\cong"] \ar[d,"\cong"] \\
\Hom_{\Sh(\mathbb{R}^n \times \mathbb{R})}(\mathbb{K}_A, \mathbb{K}_A) \ar[r,"\theta \circ -"]  & \Hom_{\Sh(\mathbb{R}^n \times \mathbb{R})}(\mathbb{K}_A, \mathbb{K}_B[n+1]).
\end{tikzcd}
\end{equation*}
By the definition of $\theta$, the morphism $(\Gamma_{\{f \geq 0 \}} \mathbb{K}_A)_{(0,0)} \to (\Gamma_{\{ f \geq 0 \}}\mathbb{K}_B[n+1])_{(0,0)}$ is an isomorphism. 
Therefore we have $\xi \notin \mathrm{SS}(\Cocone(\theta))$ and
\begin{equation*}
\mathrm{SS} ( \Cocone(\theta) ) \cap T^*_{(0,0)}(\mathbb{R}^n \times \mathbb{R}) \subset \{ a_1 dx_1 + \dots + a_{n+1} dt \mid a_{n+1} \leq 0 , {a_1}^2 + \dots + {a_n}^2 = {a_{n+1}}^2 \}.
\end{equation*}
The converse
\begin{equation*}
\mathrm{SS} ( \Cocone(\theta) ) \cap T^*_{(0,0)}(\mathbb{R}^n \times \mathbb{R}) \supset \{ a_1 dx_1 + \dots + a_{n+1} dt \mid a_{n+1} \leq 0 , {a_1}^2 + \dots + {a_n}^2 = {a_{n+1}}^2 \}
\end{equation*}
follows by a direct computation.
\end{proof}

We choose a morphism $\psi_0 \in \Hom^0_{\Sh(S^n \times S^n \times \mathbb{R})} ( K_{-} , K_{+}[n+1] ) $ such that $H^0(\psi_0) \in \Ext^{n+1}(K_{-}, K_{+}) \cong \mathbb{K}$ is a generator. Note that the isomorphism $\Ext^{n+1}(K_{-}, K_{+}) \cong \mathbb{K}$ follows from Lemma \ref{lem3.7}. 

\begin{prop}
\label{prop3.9}
There is an equality,
\begin{equation*}
\mathrm{SS} (\Cocone(\psi_0)) \cap \dot{T}^*_{(x,x,0)} (S^n \times S^n \times \mathbb{R} ) = \Lambda_\phi \cap \dot{T}^*_{(x,x,0)}(S^n \times S^n \times \mathbb{R}).
\end{equation*}
\end{prop}
\begin{proof}
It follows from Lemma \ref{lem3.8}.
\end{proof}

\begin{thm}
The sheaf $K:=\Cocone(\psi_0)$ is the GKS kernel of $\phi$. 
\end{thm}
\begin{proof}
We have to prove that $\mathrm{SS}(K) \setminus 0_{S^n \times S^n \times \mathbb{R}} = \Lambda_\phi$ and $K|_{ \{ t=0 \} } \cong \mathbb{K}_{\Delta_{S^n}}$. The first follows from Proposition \ref{prop3.5}, \ref{prop3.6}, and \ref{prop3.9}. The second follows from the following chain of isomorphisms, 
\begin{equation*}
\begin{split}
K|_{\{t=0\}} 
&\cong \Cocone( K_{-}|_{\{t=0\}} \to K_{+}[n+1]|_{\{t=0\}} ) \\
&\cong \Cocone( K_{-}|_{\{t=0\}} \to 0) \\
&\cong K_{-}|_{\{t=0\}} \\
&\cong K_{-1}|_{\{t=0\}} \\
&\cong \Cone(\mathbb{K}_{Z_{-2}}[-n]|_{\{t=0\}} \to \mathbb{K}_{Z_{-1}}|_{\{t=0\}}) \\
&\cong \Cone(0 \to \mathbb{K}_{Z_{-1}}|_{\{t=0\}}) \\
&\cong \mathbb{K}_{Z_{-1}}|_{\{t=0\}} \\
&\cong \mathbb{K}_{\Delta_{S^n}} .
\end{split}
\end{equation*}
\end{proof}

\newpage

\section{GKS kernel of the geodesic flow on $\mathbb{CP}^n$}

We fix the Fubini--Study metric $g$ on $\mathbb{CP}^n$, that is, $g$ is the metric induced by the K\"{a}hler form $ \omega=\sum_{i,j=1}^{n} 2{\sqrt{-1}} h_{ij}{dz_i} \wedge d{\bar{z}_j} $ where 
\begin{equation*}
h_{ij}=\frac{(1+ \sum_{k=1}^n |z_k|^2 ) \delta_{ij}-\bar{z}_i z_j}{(1+ \sum_{k=1}^n |z_k|^2)^2}.
\end{equation*}
We consider the Hamiltonian function $H\colon \dot{T}^*\mathbb{CP}^n \to \mathbb{R} , H(x,\xi)=|\xi|_{g^*}$ and the induced Hamiltonian isotopy $\phi \colon  \dot{T}^*\mathbb{CP}^n \times \mathbb{R} \to \dot{T}^*\mathbb{CP}^n$. In this section, we shall explicitly construct the GKS kernel $K_\phi$ of the geodesic flow $\phi$. \\

\begin{lem}
\label{lem4.1}
Set $x_0 : =(1 \colon  0 \colon  \dots \colon  0) \in \mathbb{CP}^n$. Then the exponential map from a point $x_0$ by time $t$ is given by $ \Exp_{x_0, t} \colon  S(T_{x_0} \mathbb{CP}^n) \to \mathbb{CP}^n $, $ (z_1,\dots , z_n) \mapsto (\cos \frac{t}{2} \colon  z_1 \sin \frac{t}{2} \colon  \dots \colon  z_n \sin \frac{t}{2} ) $ where $S(T_{x_0}\mathbb{CP}^n)$ is the unit sphere of $T_{x_0}\mathbb{CP}^n$ and we identify $S(T_{x_0}\mathbb{CP}^n )$ with $ \{ (z_1,\dots,z_n) \in \mathbb{C}^n \mid |z_1|^2 + \cdots + |z_n|^2 =1 \} $ by the affine coordinate.
In particular, the diameter of $\mathbb{CP}^n$ is $\pi$ and the period of the geodesic flow on $\mathbb{CP}^n$ is $2\pi$.
\end{lem}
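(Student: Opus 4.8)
The plan is to identify the Fubini--Study geodesics issuing from $x_0$ with the projections, under the Hopf fibration, of great circles on a suitably scaled round sphere, and then to read off every assertion from the resulting explicit formula. For each $r>0$ the map $\pi_r:S^{2n+1}(r)\to\mathbb{CP}^n$, $(\zeta_0,\dots,\zeta_n)\mapsto(\zeta_0:\dots:\zeta_n)$, is a Riemannian submersion from the round sphere of radius $r$ onto $\mathbb{CP}^n$ equipped with $r^2$ times the Fubini--Study metric of holomorphic sectional curvature $4$; the first task is to pin down the $r$ for which this quotient metric is the $g$ defined by the $h_{ij}$ above. I would do this by comparing the two metrics at $x_0$: from $h_{ij}(0)=\delta_{ij}$ one reads off that $\omega$, hence $g$, is $4$ times the Euclidean metric of the affine chart at $x_0$, while $d\pi_r$ at $(r,0,\dots,0)$ carries the horizontal vector $(0,v_1,\dots,v_n)$ of length $|v|$ to the affine vector $(v_1/r,\dots,v_n/r)$; matching norms forces $r=2$. (Keeping track of these normalizations — the round radius, the scaling of $\omega$, and the precise identification of $S(T_{x_0}\mathbb{CP}^n)$ with $\{\,|z|^2=1\,\}$, which amounts to $2\times$ the bare affine coordinate — is the one point that genuinely needs care; the rest is the classical great-circle computation for $\mathbb{CP}^n$.)

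Next I would exhibit and verify the candidate geodesics. Given $(z_1,\dots,z_n)$ with $\sum_i|z_i|^2=1$, set $\tilde c(t):=2\bigl(\cos\tfrac{t}{2}\,e_0+\sin\tfrac{t}{2}\,(0,z_1,\dots,z_n)\bigr)$, a curve on $S^{2n+1}(2)$. One checks directly that $|\tilde c(t)|\equiv 2$, that $|\tilde c'(t)|\equiv 1$, so $\tilde c$ is a unit-speed great circle and hence a geodesic of the round sphere, and that $\tilde c'(0)=(0,z_1,\dots,z_n)$ is real-orthogonal both to $\tilde c(0)=2e_0$ and to the vertical direction $\sqrt{-1}\,\tilde c(0)$, so $\tilde c$ is horizontal. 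By the standard property of Riemannian submersions (a geodesic with horizontal initial velocity stays horizontal and projects to a geodesic of the same speed), $c(t):=\pi_2(\tilde c(t))=\bigl(\cos\tfrac{t}{2}:z_1\sin\tfrac{t}{2}:\dots:z_n\sin\tfrac{t}{2}\bigr)$ is a unit-speed geodesic of $(\mathbb{CP}^n,g)$ with $c(0)=x_0$ and initial velocity $d\pi_2(\tilde c'(0))$, which under the stated identification of $S(T_{x_0}\mathbb{CP}^n)$ is exactly $(z_1,\dots,z_n)$. As $(z_1,\dots,z_n)$ runs over $\{\,|z|^2=1\,\}$ these exhaust all unit directions (equivalently, the isotropy $U(n)$ at $x_0$ acts transitively on the unit sphere of $T_{x_0}\mathbb{CP}^n$), so $\Exp_{x_0,t}$ is precisely the claimed formula.

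Finally, the remaining assertions follow from this formula. The curve $c$ meets $x_0$ exactly when $\sin\tfrac{t}{2}=0$, i.e. for $t\in 2\pi\mathbb{Z}$; since $c'(2\pi)=c'(0)$ while $c(\pi)=(0:z_1:\dots:z_n)\neq x_0$, every geodesic is closed with minimal period $2\pi$, and because the geodesic flow on $\dot{T}^*\mathbb{CP}^n$ is determined by its restriction to the unit cosphere bundle (by homogeneity) its period is $2\pi$. For the diameter, the classical description of the cut locus of $\mathbb{CP}^n$ shows that $c|_{[0,\pi]}$ minimizes the distance from $x_0$ to the cut locus $\{(0:z_1:\dots:z_n)\}\cong\mathbb{CP}^{n-1}$, which equals $\pi$, and every point lies within distance $\pi$ of $x_0$; homogeneity then gives $\mathrm{diam}(\mathbb{CP}^n)=\pi$. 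The only real obstacle I anticipate is getting the scaling in the first step exactly right; an alternative that sidesteps the submersion — verifying $\nabla_{\dot c}\dot c=0$ directly via the Kähler Christoffel symbols $\Gamma^k_{ij}=h^{k\bar l}\partial_i h_{j\bar l}$ — is available but more computational.
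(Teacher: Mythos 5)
Your proof is correct and takes a genuinely different route from the paper's. The paper works entirely inside the affine chart: it plugs the curve $c(t)=(z_1\tan\tfrac{t}{2},\dots,z_n\tan\tfrac{t}{2})$ into the given formula $\omega=2\sqrt{-1}\sum h_{ij}\,dz_i\wedge d\bar z_j$, verifies by a direct (if lengthy) computation that $g(\dot c,\dot c)\equiv 1$, observes that the image of $c$ is a real line through the origin in $\mathbb{C}^n$ (hence, by the symmetry of the Fubini--Study metric, a geodesic), and then rewrites $c$ in homogeneous coordinates and extends smoothly through $t=\pm\pi$. You instead realize $g$ as the quotient metric of a Riemannian submersion $\pi_r:S^{2n+1}(r)\to\mathbb{CP}^n$, pin down $r=2$ by matching $g$ to $4\times$ the Euclidean metric at $x_0$, and project the horizontal great circle $\tilde c(t)=2\bigl(\cos\tfrac{t}{2}\,e_0+\sin\tfrac{t}{2}(0,z)\bigr)$; the formula then falls out with no metric-coefficient computation, and it directly produces the homogeneous form rather than the affine $\tan\tfrac{t}{2}$ form that the paper must reinterpret. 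The submersion argument is more conceptual, immediately yields the $2\pi$-periodicity and $\pi$-diameter from the geometry of great circles and the Hopf fibres, and explains the factor $\tfrac{t}{2}$; its price is invoking the Riemannian submersion machinery and getting the $r^2$-scaling bookkeeping exactly right, which you correctly flag as the one delicate point. The paper's computation is self-contained but leaves the ``straight line implies geodesic'' step implicit, relying on a symmetry of the Fubini--Study metric that is not spelled out. Both proofs identify the unit sphere of $T_{x_0}\mathbb{CP}^n$ with $\{\,|z|^2=1\,\}$ via twice the bare affine derivative, and you are right that this normalization is where a careless version of either argument would go wrong.
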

\begin{proof}
Let $J \in \Gamma(T\mathbb{CP}^n \otimes T^* \mathbb{CP}^n)$ be the complex structure on $\mathbb{CP}^n$. Set $U_0: =\{(X_0 \colon  \dots \colon  X_n) \in \mathbb{CP}^n \mid X_0 \neq 0 \} $. Fix a point $(z_1, \dots , z_n) \in \mathbb{C}^n$ which satisfies $|z_1|^2 + \cdots + |z_n|^2 =1$. Consider the map $c \colon  (-\pi,\pi) \to U_0 $, $c(t)=( z_1 \tan \frac{t}{2} , \dots ,  z_n \tan \frac{t}{2}) $ where we identify $ U_0 $ and $\mathbb{C}^n$ by the affine coordinate. Then we have
\begin{equation*}
\frac{dc}{dt}(t) = \frac{z_1}{2\cos^2 \frac{t}{2}} \frac{\partial}{\partial z_1} +  \frac{\bar{z}_1}{2\cos^2 \frac{t}{2}} \frac{\partial}{\partial \bar{z}_1} + \cdots + \frac{z_n}{2\cos^2 \frac{t}{2}} \frac{\partial}{\partial z_n} +  \frac{\bar{z}_n}{2\cos^2 \frac{t}{2}} \frac{\partial}{\partial \bar{z}_n} \in T_{c(t)}{U_0} 
\end{equation*}
and
\begin{equation*}
J(\frac{dc}{dt}(t)) = \frac{\sqrt{-1}z_1}{2\cos^2 \frac{t}{2}} \frac{\partial}{\partial z_1} -  \frac{\sqrt{-1}\bar{z}_1}{2\cos^2 \frac{t}{2}} \frac{\partial}{\partial \bar{z}_1} + \cdots + \frac{\sqrt{-1}z_n}{2\cos^2 \frac{t}{2}} \frac{\partial}{\partial z_n} - \frac{\sqrt{-1}\bar{z}_n}{2\cos^2 \frac{t}{2}} \frac{\partial}{\partial \bar{z}_n} \in T_{c(t)}{U_0} .
\end{equation*}
Therefore
\begin{equation*}
\begin{split}
g(\frac{dc}{dt}(t),\frac{dc}{dt}(t))&=\omega(\frac{dc}{dt}(t), J(\frac{dc}{dt}(t))) \\
&=\sum_{i,j=1}^n 2{\sqrt{-1}} h_{ij} dz_i \wedge d\bar{z}_j(\frac{dc}{dt}(t), J(\frac{dc}{dt}(t)))\\
&=\sum_{i,j=1}^n 2{\sqrt{-1}} h_{ij} dz_i \wedge d\bar{z}_j(\frac{z_i}{2\cos^2 \frac{t}{2}}\frac{\partial}{\partial z_i}+\frac{\bar{z}_j}{2\cos^2\frac{t}{2}}\frac{\partial}{\partial \bar{z}_j}, \frac{\sqrt{-1}z_i}{2\cos^2 \frac{t}{2}}\frac{\partial}{\partial z_i}-\frac{\sqrt{-1}\bar{z}_j}{2\cos^2 \frac{t}{2}}\frac{\partial}{\partial \bar{z}_j} ) \\
&=\sum_{i,j=1}^n 2{\sqrt{-1}} h_{ij}  \frac{- 2\sqrt{-1} z_i \bar{z}_j}{(2\cos^2 \frac{t}{2})^2} \\
&=\sum_{i,j=1}^n \frac{ z_i \bar{z}_j  } {(\cos^2 \frac{t}{2})^2}  h_{ij} \\
&=\sum_{i,j=1}^n \frac{z_i \bar{z}_j}{(\cos^2 \frac{t}{2})^2}\frac{(1+ \sum_{k=1}^n | z_k \tan \frac{t}{2}|^2 ) \delta_{ij} - (\bar{z}_i \tan \frac{t}{2} ) ( z_j \tan \frac{t}{2}) }{(1+ \sum_{k=1}^n |z_k \tan \frac{t}{2}|^2)^2} \\
&=\sum_{i,j=1}^n \frac{ z_i \bar{z}_j  }{(\cos^2 \frac{t}{2})^2} \frac{ (1+\tan^2 \frac{t}{2}) \delta_{ij} - \bar{z}_i z_j \tan^2 \frac{t}{2}}{ (1+\tan^2 \frac{t}{2})^2 } \\
&=\sum_{i,j=1}^n { z_i \bar{z}_j  } \{ (1+\tan^2 \frac{t}{2}) \delta_{ij} - \bar{z}_i z_j \tan^2 \frac{t}{2} \} \\
&=\sum_{i=1}^n { |z_i|^2  (  1+ \tan^2 \frac{t}{2} - |z_i|^2 \tan^2 \frac{t}{2})} - \sum_{i=1}^n \sum_{j \neq i}{|z_i|^2 |z_j|^2 \tan^2 \frac{t}{2} } \\
&=\sum_{i=1}^n { |z_i|^2  (  1+ \tan^2 \frac{t}{2} - |z_i|^2 \tan^2 \frac{t}{2}- \sum_{j \neq i}  |z_j|^2 \tan^2 \frac{t}{2} ) } \\
&=\sum_{i=1}^n { |z_i|^2  (  1+ \tan^2 \frac{t}{2} - \tan^2 \frac{t}{2} ) } \\
&=1.
\end{split}
\end{equation*}
Moreover, $ \mathrm{Im}(c) \subset U_0 $ is a straight line in $U_0 \cong \mathbb{C}^n$. Therefore the map $c\colon  (-\pi,\pi) \to U_0$ is the unique geodesic on $U_0$ such that $c(0)=x_0$ and $\frac{dc}{dt}(0) = (z_1,\dots,z_n) $ where we use the identification $S(T_{x_0}U_0) \cong \{(z_1,\dots,z_n) \in \mathbb{C}^n \mid |z_1|^2 + \cdots + |z_n|^2 =1 \}$ by the affine coordinate. 

Consider the map $\tilde{c} \colon \mathbb{R} \to \mathbb{CP}^n$, $\tilde{c}(t)=(\cos \frac{t}{2}\colon  z_1 \sin \frac{t}{2} \colon  \dots \colon  z_n \sin \frac{t}{2})$. Note that $\tilde{c}$ is an extension of $c$. Since $\tilde{c}|_{\mathbb{R} \setminus \{ \pm\pi, \pm3\pi, \pm5\pi, \dots \} }  \colon  \mathbb{R} \setminus \{ \pm \pi , \pm 3\pi , \pm 5 \pi, \dots \} \to \mathbb{CP}^n$ is the geodesic and $\tilde{c} \colon  \mathbb{R} \to \mathbb{CP}^n$ is smooth, we conclude that $\tilde{c}$ is the geodesic on $\mathbb{CP}^n$ with initial velocity $(z_1,\dots ,z_n) \in S( T_{x_0}\mathbb{CP}^n )$. In particular, $\Exp_{x_0,t}(z_1,\dots,z_n) = \tilde{c}(t)$ for any $t \in \mathbb{R}$.
\end{proof}

For $x \in \mathbb{CP}^n$, we define the subset $D_{x} \subset \mathbb{CP}^n$ by $D_{x} = \{y \in \mathbb{CP}^n \mid \mathrm{dist}(x,y)=\pi \} = \{ \Exp_{x,\pi}(v) \in \mathbb{CP}^n \mid v \in S(T_{x}{\mathbb{CP}^n}) \}$. Note that $D_{x}$ is diffeomorphic to $\mathbb{CP}^{n-1}$ and the exponential map $ \Exp_{x,\pi} \colon  S(T_{x} \mathbb{CP}^n) \to D_{x}$ is an $S^1$-fibration.

For $x \in \mathbb{CP}^n$ and $y \in D_{x}$,  we define the subset $D_{x, y} \subset \mathbb{CP}^n$ by $D_{x,y} = \{ \Exp_{x,t}(v) \in \mathbb{CP}^n \mid v \in S(T_{x}\mathbb{CP}^n), \Exp_{x,\pi}(v) = y , t \in \mathbb{R} \}$. Note that $D_{x, y}$ is diffeomorphic to $\mathbb{CP}^1$. 

For $i \in \mathbb{Z}_{>0}$ odd, we define the open subset $Z_i$ of $\mathbb{CP}^n \times \mathbb{CP}^n \times \mathbb{R}$ by
\begin{equation*}
    Z_i:=\{(x,y,t)\in \mathbb{CP}^n \times \mathbb{CP}^n \times \mathbb{R} \mid \mathrm{dist}(x,y)<t-(i-1)\pi\}.
\end{equation*}

For $i \in \mathbb{Z}_{>0}$ even, we define the open subset $Z_i$ of $\mathbb{CP}^n \times \mathbb{CP}^n \times \mathbb{R}$ by
\begin{equation*}
    Z_i: =\{(x,y,t)\in \mathbb{CP}^n \times \mathbb{CP}^n \times \mathbb{R} \mid \mathrm{dist}(x,D_y)<t-(i-1)\pi\}.
\end{equation*}

\begin{lem}
\label{lem4.2}
For $i \in \mathbb{Z}_{>0}$ and $\epsilon > 0$, the inclusion map $ j \colon  \mathbb{CP}^n \times \mathbb{CP}^n \times \{i\pi +\epsilon \} \to Z_i$ is a homotopy equivalence.
\end{lem}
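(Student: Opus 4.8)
The plan is to imitate the proof of Lemma \ref{lem3.1} verbatim: exhibit an explicit deformation retraction of $Z_i$ onto the slice $\mathbb{CP}^n \times \mathbb{CP}^n \times \{i\pi+\epsilon\}$ by sliding the time coordinate. Concretely, define $r : Z_i \to \mathbb{CP}^n \times \mathbb{CP}^n \times \{i\pi+\epsilon\}$ by $r(x,y,t) = (x,y,i\pi+\epsilon)$ and the homotopy $H : Z_i \times [0,1] \to Z_i$ by $H(x,y,t,s) = (x,y,(1-s)t + s(i\pi+\epsilon))$ between $\mathrm{id}_{Z_i}$ and $j \circ r$. Since $r \circ j = \mathrm{id}$ on the slice, this shows $j$ is a homotopy equivalence, provided $H$ is well-defined, i.e. $H(Z_i \times [0,1]) \subset Z_i$.

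The one substantive point, and the main (mild) obstacle, is checking that $H$ stays inside $Z_i$. This requires that each defining inequality of $Z_i$ is preserved under increasing the time coordinate toward $i\pi+\epsilon$ — equivalently, that $Z_i$ is ``monotone in $t$'' on the relevant range. For $i$ odd, $Z_i = \{\mathrm{dist}(x,y) < t - (i-1)\pi\}$, and for $i$ even, $Z_i = \{\mathrm{dist}(x, D_y) < t - (i-1)\pi\}$; in both cases the left-hand side is independent of $t$, so if $(x,y,t) \in Z_i$ then $\mathrm{dist} < t - (i-1)\pi$ and also $\mathrm{dist} < (i\pi+\epsilon) - (i-1)\pi = \pi + \epsilon$, and since $t - (i-1)\pi > 0$ and $i\pi + \epsilon - (i-1)\pi > 0$, any convex combination $(1-s)t + s(i\pi+\epsilon)$ exceeds $(i-1)\pi + \mathrm{dist}$ as well. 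Here one uses that $\mathrm{dist}(x,y) < \pi$ always holds on $\mathbb{CP}^n$ (the diameter is $\pi$ by Lemma \ref{lem4.1}), so in particular $(x,y,i\pi+\epsilon)$ itself lies in $Z_i$, which is what makes the retraction land in $Z_i$ for every $\epsilon > 0$. Thus $H$ is well-defined and continuous, completing the argument.

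I would write the proof in two or three sentences exactly paralleling Lemma \ref{lem3.1}, noting only that the verification $H(Z_i \times [0,1]) \subset Z_i$ uses the $t$-independence of the defining functions $\mathrm{dist}(x,y)$ and $\mathrm{dist}(x,D_y)$ together with the bound $\mathrm{dist} < \pi$ from Lemma \ref{lem4.1}. No serious difficulty is expected; this is the $\mathbb{CP}^n$ analogue of a lemma already proved for $S^n$, and the only change is that $-\Delta_{S^n}$ is replaced by the incidence variety $\{(x,y) \mid y \in D_x\}$, which does not enter this particular lemma at all.
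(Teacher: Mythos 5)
Your proposal is correct and takes exactly the same approach as the paper: the paper defines the same retraction $r(x,y,t)=(x,y,i\pi+\epsilon)$ and the same straight-line homotopy $H(x,y,t,s)=(x,y,(1-s)t+s(i\pi+\epsilon))$, without further comment. Your extra verification that $H$ lands in $Z_i$ is the right thing to check (for fixed $(x,y)$ the set $Z_i$ is a $t$-half-line $\{t>(i-1)\pi+\mathrm{dist}\}$ which contains $i\pi+\epsilon$ since $\mathrm{dist}\le\pi<\pi+\epsilon$); note only the small slip that $\mathrm{dist}(x,y)\le\pi$ can be an equality on $\mathbb{CP}^n$, which is harmless here because of the strict inequality $\pi<\pi+\epsilon$.
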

\begin{proof}
The natural map $r\colon  Z_i \to \mathbb{CP}^n \times \mathbb{CP}^n \times \{ i\pi + \epsilon \}$, $r (x,y,t) = (x, y, i\pi + \epsilon)$ is a homotopy equivalence because a homotopy $H\colon Z_i \times [0,1] \to Z_i$ between $\mathrm{id}_{Z_i}$ and $j \circ r$ is given by $H(x,y,t,s)=(x,y,(1-s)t + s(i\pi + \epsilon) )$.
\end{proof}

\begin{lem}
\label{lem4.3}
For $i \in \mathbb{Z}_{>0}$ odd and $0< \epsilon < \pi$, the inclusion map $j \colon  \Delta_{\mathbb{CP}^n} \times \{i\pi +\epsilon \} \to Z_i \setminus Z_{i+1}$ is a homotopy equivalence. For $i \in \mathbb{Z}_{>0}$ even and $0<\epsilon < \pi$, the inclusion map $j\colon  \bigcup_{x \in \mathbb{CP}^n} \{x\} \times D_x \times \{ i \pi + \epsilon \} \to Z_i \setminus Z_{i+1}$ is a homotopy equivalence.
\begin{proof}
Consider the case of $i$ odd. For $(x,y) \notin \Delta_{\mathbb{CP}^n} \cup \bigcup_{x \in \mathbb{CP}^n} \{x\} \times D_x$, we define $v_x\in T_y{\mathbb{CP}^n}$ by the equations $\Exp_{y,\mathrm{dist}(x,y)}(v_x) = x$ and $|v_x|_g=1$. For $(x,y) \in \Delta_{\mathbb{CP}^n}$, we define $v_x \in T_y{\mathbb{CP}^n}$ by $v_x=0$. Note that $v_x$ is determined uniquely. The natural map $r\colon  Z_i \setminus Z_{i+1} \to \Delta_{\mathbb{CP}^n} \times \{ i\pi + \epsilon \}$, $r (x,y,t) = (x, x, i\pi + \epsilon)$ is a homotopy equivalence because a homotopy $H\colon Z_i\setminus Z_{i+1} \times [0,1] \to Z_i \setminus Z_{i+1}$ between $\mathrm{id}_{Z_i \setminus Z_{i+1}}$ and $j \circ r$ is given by $H(x,y,t,s)=(x, \Exp_{y,s \mathrm{dist}(x,y)}(v_x) ,(1-s)t + s(i\pi + \epsilon) )$.

Consider the case of $i$ even. For $(x,y) \notin \Delta_{\mathbb{CP}^n} \cup \bigcup_{x \in \mathbb{CP}^n} \{x\} \times D_x$, we define $v_x\in T_y{\mathbb{CP}^n}$ by the equations $\Exp_{y,\mathrm{dist}(x,y)}(v_x) = x$ and $|v_x|_g=1$. For $(x,y) \in \bigcup_{x \in \mathbb{CP}^n} \{x\} \times D_x$, we define $v_x \in T_y{\mathbb{CP}^n}$ by $v_x=0$. Note that $v_x$ is determined uniquely. The natural map $r\colon  Z_i \setminus Z_{i+1} \to \bigcup_{x \in \mathbb{CP}^n} \{x\} \times D_x \times \{ i\pi + \epsilon \}$, $r (x,y,t) = (x, \Exp_{y,\pi - \mathrm{dist}(x,y)}(-v_x), i \pi + \epsilon)$ is a homotopy equivalence because a homotopy $H\colon Z_i\setminus Z_{i+1} \times [0,1] \to Z_i \setminus Z_{i+1}$ between $\mathrm{id}_{Z_i \setminus Z_{i+1}}$ and $j \circ r$ is given by $H(x,y,t,s)=(x, \Exp_{y,s(\pi-\mathrm{dist}(x,y)) }(-v_x) ,(1-s)t + s(i\pi + \epsilon) )$.
\end{proof}
\end{lem}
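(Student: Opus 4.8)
\emph{Proof strategy.}
The plan is to imitate the argument for Lemma~\ref{lem3.2} on the sphere and write down an \emph{explicit} strong deformation retraction of $Z_i \cap Z_{i+1}^c$ onto the stated submanifold sitting over the slice $\{t = i\pi+\epsilon\}$, by simultaneously sliding the $t$-coordinate affinely to $i\pi+\epsilon$ and sliding $y$ along a suitable geodesic issuing from it. The preliminary fact I need is the distance identity $\mathrm{dist}(x, D_y) = \pi - \mathrm{dist}(x,y)$ for all $x,y \in \mathbb{CP}^n$; this follows from Lemma~\ref{lem4.1}, since every unit-speed geodesic of $\mathbb{CP}^n$ meets the cut locus of its starting point at time exactly $\pi$: extending a minimizing geodesic from $y$ to $x$ up to parameter $\pi$ lands in $D_y$ and realizes the distance $\pi - \mathrm{dist}(x,y)$, while the triangle inequality gives $\mathrm{dist}(x,w) \ge \mathrm{dist}(y,w) - \mathrm{dist}(y,x) = \pi - \mathrm{dist}(x,y)$ for every $w \in D_y$. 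Using this identity I can rewrite the defining inequalities of $Z_i$ and $Z_{i+1}$ purely in terms of $\mathrm{dist}(x,y)$; in particular one reads off that on $Z_i \cap Z_{i+1}^c$ one has $0 \le \mathrm{dist}(x,y) < \pi$ when $i$ is odd and $0 < \mathrm{dist}(x,y) \le \pi$ when $i$ is even.

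For $i$ odd, for $(x,y)$ with $0 < \mathrm{dist}(x,y) < \pi$ let $v_x \in T_y\mathbb{CP}^n$ be the unit vector with $\Exp_{y,\mathrm{dist}(x,y)}(v_x) = x$ (unique, since $x$ lies off the cut locus of $y$), and set $v_x = 0$ when $x = y$. I will use the homotopy $H(x,y,t,s) = \bigl(x, \Exp_{y, s\,\mathrm{dist}(x,y)}(v_x),\, (1-s)t + s(i\pi+\epsilon)\bigr)$, which is the identity at $s=0$ and is $(x,x,i\pi+\epsilon)$ at $s=1$; continuity at the diagonal is clear because the second factor tends to $y$ as $\mathrm{dist}(x,y)\to 0$. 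To see that $H$ never leaves $Z_i \cap Z_{i+1}^c$, note that along the homotopy $\mathrm{dist}(x,y_s) = (1-s)\mathrm{dist}(x,y)$, so after substituting the distance identity, membership in $Z_i$ and in $Z_{i+1}^c$ each reduces to an \emph{affine} inequality in $s$; such an inequality holds on all of $[0,1]$ as soon as it holds at $s=0$ (the hypothesis $(x,y,t)\in Z_i \cap Z_{i+1}^c$) and at $s=1$ (where it reads $0 < \pi+\epsilon$, resp. $\pi - \epsilon \ge 0$, using $0<\epsilon<\pi$). Hence $r(x,y,t) := (x,x,i\pi+\epsilon)$ is a deformation retraction onto $\Delta_{\mathbb{CP}^n}\times\{i\pi+\epsilon\}$ and $j$ is a homotopy equivalence.

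For $i$ even the target is instead the incidence locus $\bigcup_x \{x\}\times D_x$ over $\{t=i\pi+\epsilon\}$, and the homotopy should push $y$ \emph{away} from $x$ until $\mathrm{dist}(x,y)$ reaches $\pi$: with $v_x$ as above (and $v_x = 0$ when $y\in D_x$), I take $H(x,y,t,s) = \bigl(x, \Exp_{y, s(\pi-\mathrm{dist}(x,y))}(-v_x),\, (1-s)t + s(i\pi+\epsilon)\bigr)$, so that $\mathrm{dist}(x,y_s) = (1-s)\mathrm{dist}(x,y) + s\pi$ and $y_1 \in D_x$. The same affine-inequality bookkeeping, now with $\mathrm{dist}(x,y_s)\in[\mathrm{dist}(x,y),\pi]$, keeps $H$ inside $Z_i\cap Z_{i+1}^c$, and $r\circ j = \mathrm{id}$ holds because for $(x,y,i\pi+\epsilon)$ with $y\in D_x$ the map $r$ returns $(x,y,i\pi+\epsilon)$.

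The point needing the most care — and what I expect to be the main obstacle — is continuity of the homotopy in the even case across the cut-locus stratum $\{y\in D_x\}$: there the direction $v_x$ is genuinely \emph{not} continuous, since the minimizing geodesic from $y$ to $x$ is non-unique and forms a circle. One must therefore exploit that $v_x$ enters only through $\Exp_{y, s(\pi-\mathrm{dist}(x,y))}(-v_x)$, whose geodesic time $\pi-\mathrm{dist}(x,y)$ vanishes on that stratum, so the second coordinate of $H$ extends continuously to equal $y$ there regardless of the ambiguity in $v_x$. A small amount of additional bookkeeping confirms that $0 < \mathrm{dist}(x,y)$ really holds everywhere on $Z_i\cap Z_{i+1}^c$ for $i$ even (so $v_x$ is at least defined off $D_x$), which again follows from the rewritten inequalities once the distance identity is in hand. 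Everything else is the routine verification of the two homotopy identities, exactly parallel to Lemma~\ref{lem3.2}.
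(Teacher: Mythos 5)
Your proposal is correct and follows essentially the same route as the paper: the same explicit deformation retractions $r$ and homotopies $H$, sliding the $t$-coordinate affinely and moving $y$ along the geodesic in the direction $\pm v_x$. You go beyond the paper by actually verifying the omitted details — the identity $\mathrm{dist}(x, D_y) = \pi - \mathrm{dist}(x,y)$, the affine-in-$s$ inequality check that $H$ stays inside $Z_i \cap Z_{i+1}^c$, and the continuity of the even-case homotopy across the cut-locus stratum (where $v_x$ is discontinuous but its coefficient vanishes) — all of which are needed and correctly handled.
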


\begin{lem}
\label{lem4.4}
For $i \in \mathbb{Z}_{>0}$ odd, the object $ \Hom_{\Sh(\mathbb{CP}^n \times \mathbb{CP}^n \times \mathbb{R}) }(\mathbb{K}_{Z_i}, \mathbb{K}_{Z_{i+1}}) $ is isomorphic to $ C^*(\mathbb{CP}^n \times \mathbb{CP}^n , \mathbb{CP}^n \times \mathbb{CP}^n \setminus \bigcup_{x \in \mathbb{CP}^n } \{x\} \times D_x ) $ in $\Mod(\mathbb{K})$.
For $i \in \mathbb{Z}_{>0}$ even, the object $\Hom_{\Sh(\mathbb{CP}^n \times \mathbb{CP}^n \times \mathbb{R}) }(\mathbb{K}_{Z_i}, \mathbb{K}_{Z_{i+1}}) $ is isomorphic to $ C^*(\mathbb{CP}^n \times \mathbb{CP}^n , \mathbb{CP}^n \times \mathbb{CP}^n \setminus \Delta_{\mathbb{CP}^n})$ in $\Mod(\mathbb{K})$.
\end{lem}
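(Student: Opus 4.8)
The plan is to repeat the proof of Lemma~\ref{lem3.3} almost verbatim, with $S^n$ replaced by $\mathbb{CP}^n$ and the antidiagonal $-\Delta_{S^n}$ replaced by the family $\bigcup_{x\in\mathbb{CP}^n}\{x\}\times D_x$; Lemmas~\ref{lem4.2} and~\ref{lem4.3} play the roles that Lemmas~\ref{lem3.1} and~\ref{lem3.2} played there. Concretely, for $i$ odd I would apply the first distinguished triangle of Proposition~\ref{prop2.5} to the open subset $Z_{i+1}$ and then the functor $\Hom_{\Sh}(\mathbb{K}_{Z_i},-)$, where $\Hom_{\Sh}$ abbreviates $\Hom_{\Sh(\mathbb{CP}^n\times\mathbb{CP}^n\times\mathbb{R})}$, obtaining
\[
\Hom_{\Sh}(\mathbb{K}_{Z_i},\mathbb{K}_{Z_{i+1}})\cong\Cocone\bigl(\Hom_{\Sh}(\mathbb{K}_{Z_i},\mathbb{K}_{\mathbb{CP}^n\times\mathbb{CP}^n\times\mathbb{R}})\to\Hom_{\Sh}(\mathbb{K}_{Z_i},\mathbb{K}_{Z_{i+1}^c})\bigr).
\]
Proposition~\ref{prop2.6} identifies the two inner $\Hom$'s with $C^*(Z_i)$ and $C^*(Z_i\cap Z_{i+1}^c)$ respectively (applicable because $Z_i$ is open and $Z_i\cap Z_{i+1}^c$ is locally contractible, being a deformation retract of a smooth family by Lemma~\ref{lem4.3}); Lemmas~\ref{lem4.2} and~\ref{lem4.3} then replace these by $C^*(\mathbb{CP}^n\times\mathbb{CP}^n)$ and $C^*(\Delta_{\mathbb{CP}^n})$, compatibly with the restriction map between them; and Proposition~\ref{prop2.7} converts the $\Cocone$ into $C^*(\mathbb{CP}^n\times\mathbb{CP}^n,\Delta_{\mathbb{CP}^n})$. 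For $i$ even the identical chain, using the second half of Lemma~\ref{lem4.3}, produces $C^*(\mathbb{CP}^n\times\mathbb{CP}^n,\bigcup_x\{x\}\times D_x)$.

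It then remains to rewrite these relative cochain complexes with the complements of $\bigcup_x\{x\}\times D_x$ and of $\Delta_{\mathbb{CP}^n}$ in place of $\Delta_{\mathbb{CP}^n}$ and $\bigcup_x\{x\}\times D_x$. An inclusion $A\hookrightarrow B$ of subspaces of $X$ that is a homotopy equivalence induces a map between the distinguished triangles of Proposition~\ref{prop2.7} that is the identity on $C^*(X)$ and a quasi-isomorphism on $C^*(B)\to C^*(A)$, hence is itself a quasi-isomorphism $C^*(X,B)\xrightarrow{\sim}C^*(X,A)$; so it suffices to check that $\Delta_{\mathbb{CP}^n}\hookrightarrow\mathbb{CP}^n\times\mathbb{CP}^n\setminus\bigcup_x\{x\}\times D_x$ and $\bigcup_x\{x\}\times D_x\hookrightarrow\mathbb{CP}^n\times\mathbb{CP}^n\setminus\Delta_{\mathbb{CP}^n}$ are homotopy equivalences. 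By Lemma~\ref{lem4.1} the first complement equals $\{(x,y)\mid\mathrm{dist}(x,y)<\pi\}$, on which $x$ and $y$ are joined by a unique minimizing geodesic; sliding $y$ to $x$ along it --- the time-slice of the odd-case homotopy of Lemma~\ref{lem4.3} --- deformation retracts it onto $\Delta_{\mathbb{CP}^n}$. Dually $\mathbb{CP}^n\times\mathbb{CP}^n\setminus\Delta_{\mathbb{CP}^n}=\{(x,y)\mid x\neq y\}$ deformation retracts onto $\bigcup_x\{x\}\times D_x$ by $(x,y)\mapsto(x,\Exp_{y,\pi-\mathrm{dist}(x,y)}(-v_x))$ with $v_x$ as in Lemma~\ref{lem4.3}, i.e.\ by extending the geodesic from $x$ through $y$ until it first meets the cut locus $D_x$ of $x$, which by Lemma~\ref{lem4.1} occurs at time exactly $\pi$; this is the time-slice of the even-case homotopy of Lemma~\ref{lem4.3}.

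The step I expect to require the most care --- the main obstacle, such as it is --- is the continuity of the second retraction across the locus $\bigcup_x\{x\}\times D_x=\{(x,y)\mid\mathrm{dist}(x,y)=\pi\}$, where the minimizing geodesics from $x$ to $y$ sweep out a circle (the fiber of $\Exp_{x,\pi}\colon S(T_x\mathbb{CP}^n)\to D_x$), so that $v_x$ cannot be chosen continuously there. This is harmless: on that locus the retraction equals the identity irrespective of the choice, since $\Exp_{y,\pi-\mathrm{dist}(x,y)}(-v_x)=\Exp_{y,0}(-v_x)=y$, and near it $\pi-\mathrm{dist}(x,y)\to 0$, so $\Exp_{y,\pi-\mathrm{dist}(x,y)}(-v_x)\to y$ uniformly in the unit vector $-v_x$ --- precisely the damping that already makes the analogous homotopy in Lemma~\ref{lem4.3} continuous in spite of the discontinuity of $v_x$. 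Once this is granted, the remainder is a line-by-line transcription of Lemma~\ref{lem3.3}, which I would write out in the two parities.
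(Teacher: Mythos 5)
Your proposal follows the same route as the paper's proof: the same chain of isomorphisms (Proposition~\ref{prop2.5}, Proposition~\ref{prop2.6}, Lemmas~\ref{lem4.2}--\ref{lem4.3}, Proposition~\ref{prop2.7}), and the same final replacement of the relative pair via a homotopy equivalence of subspaces. The only difference is that you spell out the two deformation retractions underlying the last step and flag the continuity of the even-case retraction across the cut locus, details the paper leaves implicit; your treatment of that continuity (the damping $\pi-\mathrm{dist}(x,y)\to 0$ renders the non-uniqueness of $v_x$ harmless, exactly as in Lemma~\ref{lem4.3}) is correct.
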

\begin{proof}
Consider the case of $i$ odd. We have the following chain of isomorphisms,
\begin{equation*}
\begin{split}
&\Hom_{\Sh(\mathbb{CP}^n \times \mathbb{CP}^n \times \mathbb{R}) }(\mathbb{K}_{Z_i},\mathbb{K}_{Z_{i+1}}) \\
&\cong \Cocone( \Hom_{\Sh(\mathbb{CP}^n \times \mathbb{CP}^n \times \mathbb{R}) }(\mathbb{K}_{Z_i}, \mathbb{K}_{\mathbb{CP}^n \times \mathbb{CP}^n \times \mathbb{R} }) \to \Hom_{\Sh(\mathbb{CP}^n \times \mathbb{CP}^n \times \mathbb{R} ) }(\mathbb{K}_{Z_i} , \mathbb{K}_{{Z_{i+1}}^c})) \\
&\cong \Cocone(C^*(Z_i) \to C^*(Z_i \setminus Z_{i+1})) \\
&\cong \Cocone(C^*(\mathbb{CP}^n \times \mathbb{CP}^n) \to C^*(\Delta_{\mathbb{CP}^n} ) ) \\
&\cong C^*(\mathbb{CP}^n \times \mathbb{CP}^n , \Delta_{\mathbb{CP}^n} ) \\
&\cong C^*(\mathbb{CP}^n \times \mathbb{CP}^n , \mathbb{CP}^n \times \mathbb{CP}^n \setminus \bigcup_{x \in \mathbb{CP}^n } \{x\} \times D_x ).
\end{split}
\end{equation*}
The first isomorphism follows from Proposition \ref{prop2.5}.
The second follows from Proposition \ref{prop2.6}.
The third follows from Lemma \ref{lem4.2} and \ref{lem4.3}.
The fourth follows from Proposition \ref{prop2.7}.
The fifth follows from the fact that the inclusion map $\Delta_{\mathbb{CP}^n} \to \mathbb{CP}^n \times \mathbb{CP}^n \setminus \bigcup_{x \in \mathbb{CP}^n } \{x\} \times D_x $ is a homotopy equivalence.

Consider the case of $i$ even. We have the following chain of isomorphisms,
\begin{equation*}
\begin{split}
&\Hom_{\Sh(\mathbb{CP}^n \times \mathbb{CP}^n \times \mathbb{R}) }(\mathbb{K}_{Z_i}, \mathbb{K}_{Z_{i+1}}) \\
&\cong \Cocone( \Hom_{\Sh(\mathbb{CP}^n \times \mathbb{CP}^n \times \mathbb{R}) }(\mathbb{K}_{Z_i}, \mathbb{K}_{\mathbb{CP}^n \times \mathbb{CP}^n \times \mathbb{R} }) \to \Hom_{\Sh(\mathbb{CP}^n \times \mathbb{CP}^n \times \mathbb{R}) }(\mathbb{K}_{Z_i} , \mathbb{K}_{{Z_{i+1}}^c})) \\
&\cong \Cocone(C^*(Z_i) \to C^*(Z_i \setminus Z_{i+1})) \\
&\cong \Cocone(C^*(\mathbb{CP}^n \times \mathbb{CP}^n) \to C^*( \bigcup_{x \in \mathbb{CP}^n} \{ x \} \times D_{x})) \\
&\cong C^*(\mathbb{CP}^n \times \mathbb{CP}^n , \bigcup_{x \in \mathbb{CP}^n} \{x\} \times D_x) \\
&\cong C^*( \mathbb{CP}^n \times \mathbb{CP}^n , \mathbb{CP}^n \times \mathbb{CP}^n \setminus \Delta_{\mathbb{CP}^n}).
\end{split}
\end{equation*}
The first isomorphism follows from Proposition \ref{prop2.5}. The second follows from Proposition \ref{prop2.6}. The third follows from Lemma \ref{lem4.2} and \ref{lem4.3}. The fourth follows from Proposition \ref{prop2.7}. The fifth follows from the fact that the inclusion map $ \bigcup_{x \in \mathbb{CP}^n} \{x\} \times D_x \to \mathbb{CP}^n \times \mathbb{CP}^n \setminus \Delta_{\mathbb{CP}^n}$ is a homotopy equivalence. 
\end{proof}

\begin{lem}[{\cite[Lemma 11.7]{MS}}]
\label{lem4.5}
For a point $x \in \mathbb{CP}^n$, we define the map $j_{x} \colon  \mathbb{CP}^n \to \mathbb{CP}^n \times \mathbb{CP}^n$ by $j_{x}(y)=(x,y)$. Then there exists a unique cohomology class $u \in H^{2n}(\mathbb{CP}^n \times \mathbb{CP}^n , \mathbb{CP}^n \times \mathbb{CP}^n \setminus \Delta_{\mathbb{CP}^n})$ such that for any $x \in \mathbb{CP}^n$ the map $ {j_{x}}^* \colon  H^{2n}(\mathbb{CP}^n \times \mathbb{CP}^n , \mathbb{CP}^n \times \mathbb{CP}^n \setminus \Delta_{\mathbb{CP}^n}) \to H^{2n}(  \mathbb{CP}^n , \mathbb{CP}^n \setminus \{ x \} )$ sends $u$ to a generator of $H^{2n}(\mathbb{CP}^n ,  \mathbb{CP}^n \setminus \{ x \} ) \cong \mathbb{K}$.
\end{lem}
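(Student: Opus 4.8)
The plan is to identify $u$ with the Thom class of the tangent bundle of $\mathbb{CP}^n$, realized inside $\mathbb{CP}^n \times \mathbb{CP}^n$ via the diagonal. Recall that for any smooth manifold $M$ the normal bundle of the diagonal $\Delta_M \subset M \times M$ is canonically isomorphic to $TM$ (for instance via $T_{(x,x)}(M \times M)/T_{(x,x)}\Delta_M \xrightarrow{\sim} T_xM$, $[(v,w)] \mapsto w - v$). For $M = \mathbb{CP}^n$ this normal bundle is a complex, hence canonically oriented, real vector bundle of rank $2n$, and a complex orientation induces a $\mathbb{K}$-orientation for every commutative unital ring $\mathbb{K}$. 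So the statement is exactly the existence and uniqueness of a Thom class for an oriented bundle, which is \cite[Lemma 11.7]{MS}; what remains is to unwind the identifications that turn that lemma into the assertion above.

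Concretely, I would first fix a tubular neighborhood $N$ of $\Delta_{\mathbb{CP}^n}$ in $\mathbb{CP}^n \times \mathbb{CP}^n$ together with a diffeomorphism onto the total space $E$ of the normal bundle $\nu \cong T\mathbb{CP}^n$ carrying $\Delta_{\mathbb{CP}^n}$ to the zero section. By excision,
\begin{equation*}
H^*(\mathbb{CP}^n \times \mathbb{CP}^n, \mathbb{CP}^n \times \mathbb{CP}^n \setminus \Delta_{\mathbb{CP}^n}) \cong H^*(N, N \setminus \Delta_{\mathbb{CP}^n}) \cong H^*(E, E \setminus 0),
\end{equation*}
so the left-hand group is the Thom cohomology of $T\mathbb{CP}^n$. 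Then I would apply the Thom isomorphism theorem in the form of \cite[Lemma 11.7]{MS}: since $T\mathbb{CP}^n$ is $\mathbb{K}$-oriented there is one and only one class $u \in H^{2n}(E, E \setminus 0)$ restricting to the preferred generator of $H^{2n}(E_x, E_x \setminus 0) \cong \mathbb{K}$ for every $x$, and moreover $H^{2n}(E, E \setminus 0) \cong H^0(\mathbb{CP}^n) \cong \mathbb{K}$ because $\mathbb{CP}^n$ is connected. Transporting $u$ back along the excision isomorphism produces the desired class, and uniqueness is inherited.

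Finally I would verify the fiberwise statement. For $x \in \mathbb{CP}^n$ the map $j_x$ is transverse to $\Delta_{\mathbb{CP}^n}$ with $j_x^{-1}(\Delta_{\mathbb{CP}^n}) = \{x\}$, so it sends the pair $(\mathbb{CP}^n, \mathbb{CP}^n \setminus \{x\})$ into $(\mathbb{CP}^n \times \mathbb{CP}^n, \mathbb{CP}^n \times \mathbb{CP}^n \setminus \Delta_{\mathbb{CP}^n})$, and under the tubular-neighborhood identification it becomes, up to a fiber-preserving diffeomorphism near $x$, the inclusion of the fiber $E_x \hookrightarrow E$. Hence $j_x^* u$ corresponds to the restriction of the Thom class to $(E_x, E_x \setminus 0) \cong (\mathbb{R}^{2n}, \mathbb{R}^{2n} \setminus 0)$, which is a generator of $H^{2n}(\mathbb{CP}^n, \mathbb{CP}^n \setminus \{x\})$ by the defining property of $u$.

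The main point to be careful about is precisely this last compatibility: one must check that the canonical isomorphism $\nu \cong T\mathbb{CP}^n$ and the differential of $j_x$ match up the chosen generators consistently as $x$ varies, so that a single class $u$ works for every $x$ with a uniform sign. Once one commits to the complex orientation of $T\mathbb{CP}^n$, which is global, this is forced by connectedness of $\mathbb{CP}^n$, so there is no genuine obstacle — the argument is essentially a bookkeeping reduction to \cite[Lemma 11.7]{MS}, whose hypotheses hold because $\mathbb{CP}^n$ is a complex (in particular oriented) manifold.
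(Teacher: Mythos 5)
The paper offers no proof here: the bracketed \cite[Lemma 11.7]{MS} is a citation, not a label for a technique, and Milnor--Stasheff's Lemma~11.7 \emph{is} the statement being recorded --- it asserts precisely that for a compact $\Lambda$-oriented $n$-manifold $M$ there is a unique $u'' \in H^n(M\times M, M\times M \setminus \Delta_M;\Lambda)$ restricting to the local orientation generator at every point. The paper simply specializes this to $M=\mathbb{CP}^n$ (real dimension $2n$, complex hence $\mathbb{K}$-oriented for every commutative unital $\mathbb{K}$).

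Your derivation is a correct proof of that lemma, and it is in substance the standard one: the normal bundle of $\Delta_{\mathbb{CP}^n}$ is canonically $T\mathbb{CP}^n$, excision through a tubular neighborhood identifies the relative pair with the Thom pair $(E,E\setminus 0)$ of $T\mathbb{CP}^n$, the Thom isomorphism gives $H^{2n}(E,E\setminus 0)\cong H^0(\mathbb{CP}^n)\cong\mathbb{K}$ with a canonical generator, and transversality of $j_x$ to the diagonal matches $j_x^*u$ with the fiber restriction of the Thom class. The one thing to repair is the attribution inside your argument: when you write that you ``apply the Thom isomorphism theorem in the form of [MS, Lemma 11.7],'' you are quoting the target of the proof as if it were the tool. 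What you are actually invoking at that step is the Thom isomorphism/Thom class for an oriented vector bundle (MS, Theorem~9.1 or Theorem~10.4, depending on coefficients), plus connectedness of the base; MS's Lemma~11.7 is the output, obtained from that plus the tubular-neighborhood/excision step you describe. With the citation corrected the argument stands, and it agrees with what the cited reference does.
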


For $i \in \mathbb{Z}_{>0}$ odd, we choose a morphism $\psi_i \in \Hom^0_{\Sh(\mathbb{CP}^n \times \mathbb{CP}^n \times \mathbb{R})} (\mathbb{K}_{Z_i}, \mathbb{K}_{Z_{i+1}}[2]) $ such that $ H^0(\psi_i) \in \Ext^2(\mathbb{K}_{Z_i}, \mathbb{K}_{Z_{i+1}}) \cong \mathbb{K}$ is a generator. Note that the isomorphism $\Ext^2(\mathbb{K}_{Z_i}, \mathbb{K}_{Z_{i+1}}) \cong \mathbb{K}$ follows from Lemma \ref{lem4.4}.
For $i \in \mathbb{Z}_{>0}$ even, we choose a morphism $\psi_i \in \Hom^0_{\Sh(\mathbb{CP}^n \times \mathbb{CP}^n \times \mathbb{R})} (\mathbb{K}_{Z_i}, \mathbb{K}_{Z_{i+1}}[2n]) $ such that $ H^0(\psi_i) \in \Ext^{2n}(\mathbb{K}_{Z_i}, \mathbb{K}_{Z_{i+1}} ) \cong \mathbb{K}$ is a generator. Note that the isomorphism $ \Ext^{2n}(\mathbb{K}_{Z_i}, \mathbb{K}_{Z_{i+1}} ) \cong \mathbb{K} $ follows from Lemma \ref{lem4.4} and the Thom isomorphism. Thus we obtain the sequence in $ \Sh(\mathbb{CP}^n \times \mathbb{CP}^n \times \mathbb{R})$,
\begin{equation*}
\mathbb{K}_{Z_1} \xrightarrow{\psi_1} \mathbb{K}_{Z_2}[2] \xrightarrow{\psi_2} \mathbb{K}_{Z_3}[2n+2] \xrightarrow{\psi_3} \mathbb{K}_{Z_4}[2n+4] \xrightarrow{\psi_4} \cdots .
\end{equation*}

\begin{prop}
\label{prop4.6}
For $i \in \mathbb{Z}_{>0}$ odd, there is an equality,
\begin{equation*}
\begin{split}
\mathrm{SS} (\mathbb{K}_{Z_i}) \cap \dot{T}^*_{(x,x,(i-1)\pi)}(\mathbb{CP}^n \times \mathbb{CP}^n \times \mathbb{R})&= \left\{ \sum_k \xi_k \middle| \xi_k \in \Lambda_\phi \cap \dot{T}^*_{(x,x,(i-1)\pi)}(\mathbb{CP}^n \times \mathbb{CP}^n \times \mathbb{R})  \right\}, \\
\mathrm{SS} (\mathbb{K}_{Z_i}) \cap \dot{T}^*_{(x,y,i\pi)}(\mathbb{CP}^n \times \mathbb{CP}^n \times \mathbb{R}) &= \left\{ \sum_k \xi_k \middle| \xi_k \in \Lambda_\phi \cap \dot{T}^*_{(x,y,i\pi)}(\mathbb{CP}^n \times \mathbb{CP}^n \times \mathbb{R}) \right\} \ \text{(if $y \in D_x$)}, \\
\mathrm{SS} (\mathbb{K}_{Z_i}) \cap \dot{T}^*_{(x,y,t)}(\mathbb{CP}^n \times \mathbb{CP}^n \times \mathbb{R}) &= \Lambda_{\phi} \cap \dot{T}^*_{(x,y,t)}(\mathbb{CP}^n \times \mathbb{CP}^n \times \mathbb{R} ) \ \text{(if $(i-1)\pi<t<i\pi$)}, \\
\mathrm{SS} (\mathbb{K}_{Z_i}) \cap \dot{T}^*_{(x,y,t)}(\mathbb{CP}^n \times \mathbb{CP}^n \times \mathbb{R}) &= \emptyset \ \text{(otherwise)}. \\
\end{split}
\end{equation*}
For $i \in \mathbb{Z}_{>0}$ even, there is an equality,
\begin{equation*}
\begin{split}
\mathrm{SS} (\mathbb{K}_{Z_i}) \cap \dot{T}^*_{(x,y,(i-1)\pi)}(\mathbb{CP}^n \times \mathbb{CP}^n \times \mathbb{R})&= \left\{ \sum_k \xi_k \middle| \xi_k \in \Lambda_\phi \cap \dot{T}^*_{(x,y,(i-1)\pi)}(\mathbb{CP}^n \times \mathbb{CP}^n \times \mathbb{R})  \right\}  \ \text{(if $y \in D_x$)}, \\
\mathrm{SS} (\mathbb{K}_{Z_i}) \cap \dot{T}^*_{(x,x,i\pi)}(\mathbb{CP}^n \times \mathbb{CP}^n \times \mathbb{R}) &= \left\{ \sum_k \xi_k \middle| \xi_k \in \Lambda_\phi \cap \dot{T}^*_{(x,x,i\pi)}(\mathbb{CP}^n \times \mathbb{CP}^n \times \mathbb{R}) \right\}, \\
\mathrm{SS} (\mathbb{K}_{Z_i}) \cap \dot{T}^*_{(x,y,t)}(\mathbb{CP}^n \times \mathbb{CP}^n \times \mathbb{R}) &= \Lambda_{\phi} \cap \dot{T}^*_{(x,y,t)}(\mathbb{CP}^n \times \mathbb{CP}^n \times \mathbb{R} ) \ \text{(if $(i-1)\pi<t<i\pi$)}, \\
\mathrm{SS} (\mathbb{K}_{Z_i}) \cap \dot{T}^*_{(x,y,t)}(\mathbb{CP}^n \times \mathbb{CP}^n \times \mathbb{R}) &= \emptyset \ \text{(otherwise)}. \\
\end{split}
\end{equation*}
\end{prop}
\begin{proof}
It follows from the definition of $Z_i$ and Lemma \ref{lem3.4}.
\end{proof}

\begin{prop}
\label{prop4.7}
For $i \in \mathbb{Z}_{>0}$ odd, there is an equality, 
\begin{equation*}
\mathrm{SS}(\Cocone(\psi_i)) \cap \dot{T}^*_{(x,y,i\pi)}(\mathbb{CP}^n \times \mathbb{CP}^n \times \mathbb{R}) = \Lambda_\phi \cap \dot{T}^*_{(x,y,i\pi)}(\mathbb{CP}^n \times \mathbb{CP}^n \times \mathbb{R}) \ \text{(if $y \in D_x$)}.
\end{equation*}
\end{prop}
\begin{proof}
We only prove the case of $i=1$ since the proof for $i > 1$ is similar.
Fix points $x_0 \in \mathbb{CP}^n$ and $ y_0 \in D_{x_0} $.
Set $W  :  = \{x_0\} \times \mathbb{CP}^n \times (\pi-\epsilon, \pi+\epsilon)$, $W'  :  = \{x_0\} \times D_{x_0,y_0} \times (\pi-\epsilon, \pi+\epsilon)$, and $W''  : = \{x_0\} \times ( D_{x_0,y_0} \cap B(y_0;\epsilon) ) \times (\pi-\epsilon, \pi+\epsilon)$. 
Then it suffices to prove the equality $\mathrm{SS} (\Cocone(\psi_1 |_{W})) \cap \dot{T}^*_{(x_0,y_0,\pi)}{W} = \{((y_0,\xi) , (\pi, -|\xi|_{g^*})) \in \dot{T}^*_{y_0}\mathbb{CP}^n \times T^*_{\pi}\mathbb{R}  \mid (y_0, \xi) \in \dot{T}^*_{y_0}\mathbb{CP}^n  \}$.
Since the microsupport is a local invariant, it suffices to work on a sufficiently small trivializing neighborhood of the $S^1$-fibration $\Exp_{x_0,\pi}\colon S(T_{x_0}\mathbb{CP}^n) \to D_{x_0}$.
Hence, it suffices to prove the equality $\mathrm{SS} (\Cocone(\psi_1 |_{W''})) \cap \dot{T}^*_{(x_0,y_0,\pi)}{W''} = \{((y_0,\xi) , (\pi, -|\xi|_{g^*})) \in \dot{T}^*_{y_0}{D_{x_0,y_0}} \times T^*_{\pi}\mathbb{R}  \mid (y_0, \xi) \in \dot{T}^*_{y_0}D_{x_0,y_0}  \}$.
We have the following commutative diagram:
\begin{equation*}
\begin{tikzcd}[column sep=tiny]
\Hom(\mathbb{K}_{Z_1},\mathbb{K}_{Z_2}) \ar[r] \ar[d] & \Hom(\mathbb{K}_{Z_1}, \mathbb{K}_{\mathbb{CP}^n \times \mathbb{CP}^n \times \mathbb{R}}) \ar[r] \ar[d] & \Hom(\mathbb{K}_{Z_1}, \mathbb{K}_{{Z_2}^c}) \ar[d] \ar[r, "+1"] & \   \\
\Hom(\mathbb{K}_{Z_1}|_W,\mathbb{K}_{Z_2}|_W) \ar[r] \ar[d] & \Hom(\mathbb{K}_{Z_1}|_W, \mathbb{K}_{\mathbb{CP}^n \times \mathbb{CP}^n \times \mathbb{R}}|_W) \ar[r] \ar[d] & \Hom(\mathbb{K}_{Z_1}|_W, \mathbb{K}_{{Z_2}^c}|_W) \ar[d] \ar[r, "+1"] & \   \\ 
\Hom(\mathbb{K}_{Z_1}|_{W'},\mathbb{K}_{Z_2}|_{W'}) \ar[r] \ar[d] & \Hom(\mathbb{K}_{Z_1}|_{W'}, \mathbb{K}_{\mathbb{CP}^n \times \mathbb{CP}^n \times \mathbb{R}}|_{W'}) \ar[r] \ar[d] & \Hom(\mathbb{K}_{Z_1}|_{W'}, \mathbb{K}_{{Z_2}^c}|_{W'}) \ar[d] \ar[r, "+1"] & \   \\
\Hom(\mathbb{K}_{Z_1}|_{W''},\mathbb{K}_{Z_2}|_{W''}) \ar[r] & \Hom(\mathbb{K}_{Z_1}|_{W''}, \mathbb{K}_{\mathbb{CP}^n \times \mathbb{CP}^n \times \mathbb{R}}|_{W''}) \ar[r] & \Hom (\mathbb{K}_{Z_1}|_{W''}, \mathbb{K}_{{Z_2}^c}|_{W''}) \ar[r, "+1"] & \ .  
\end{tikzcd} 
\end{equation*}
The vertical arrows are the restriction maps. By Proposition \ref{prop2.7}, this diagram can be seen as the following commutative diagram:
\begin{equation*}
\hspace*{-2cm}
\begin{tikzcd}[column sep=tiny]
C^*(\mathbb{CP}^n \times \mathbb{CP}^n , \mathbb{CP}^n \times \mathbb{CP}^n \setminus \bigcup_{x \in \mathbb{CP}^n} \{ x \} \times D_x ) \ar[r] \ar[d,"{i_1}^*"] & C^*(\mathbb{CP}^n \times \mathbb{CP}^n) \ar[r] \ar[d] & C^*( \mathbb{CP}^n \times \mathbb{CP}^n \setminus \bigcup_{x \in \mathbb{CP}^n} \{ x \} \times D_x  ) \ar[d] \ar[r, "+1"] & \   \\
C^*( \mathbb{CP}^n , \mathbb{CP}^n \setminus D_{x_0} ) \ar[r] \ar[d,"{i_2}^*"] & C^*( \mathbb{CP}^n) \ar[r] \ar[d] & C^*( \mathbb{CP}^n \setminus D_{x_0} ) \ar[d] \ar[r, "+1"] & \   \\
C^*( D_{x_0,y_0} ,  D_{x_0,y_0} \setminus \{y_0\} ) \ar[r] \ar[d,"{i_3}^*"] & C^*(  D_{x_0,y_0}) \ar[r] \ar[d] & C^*(  D_{x_0,y_0} \setminus \{y_0\}  ) \ar[d] \ar[r, "+1"] & \   \\
C^*( D_{x_0,y_0} \cap B(y_0;\epsilon ) ,   D_{x_0,y_0} \cap B(y_0; \epsilon) \setminus \{y_0\} )  \ar[r] & C^*(   D_{x_0,y_0} \cap B(y_0;\epsilon)    ) \ar[r] & C^*( D_{x_0,y_0} \cap B(y_0;\epsilon) \setminus \{y_0\} ) \ar[r, "+1"] & \ 
\end{tikzcd}
\end{equation*}
where $i_1\colon \mathbb{CP}^n \to \mathbb{CP}^n \times \mathbb{CP}^n$ is the continuous map defined by $i_1(y)=(x_0,y)$, $i_2 \colon  D_{x_0,y_0} \to \mathbb{CP}^n $ is the inclusion map, and  $i_3 \colon   D_{x_0,y_0} \cap  B(y_0;\epsilon) \to D_{x_0,y_0} $ is the inclusion map. Taking its cohomology, we have the following commutative diagram:
\begin{equation*}
\hspace*{-2cm}
\begin{tikzcd}[column sep=tiny]
H^{2}( \mathbb{CP}^n \times \mathbb{CP}^n , \mathbb{CP}^n \times \mathbb{CP}^n \setminus \bigcup_{x \in \mathbb{CP}^n} \{ x \} \times D_x ) \ar[r] \ar[d,"H^2({i_1}^*)"] & H^{2}(\mathbb{CP}^n \times \mathbb{CP}^n) \ar[r] \ar[d] & H^{2}( \mathbb{CP}^n \times \mathbb{CP}^n \setminus \bigcup_{x \in \mathbb{CP}^n} \{ x \} \times D_x  ) \ar[d] \\
H^{2}( \mathbb{CP}^n , \mathbb{CP}^n \setminus D_{x_0} ) \ar[r] \ar[d,"H^2({i_2}^*)"] & H^{2}( \mathbb{CP}^n) \ar[r] \ar[d] & H^{2}( \mathbb{CP}^n \setminus D_{x_0} ) \ar[d] \\
H^{2}( D_{x_0,y_0} , D_{x_0,y_0} \setminus \{y_0\} ) \ar[r] \ar[d,"H^2({i_3}^*)"] & H^{2}(  D_{x_0,y_0}) \ar[r] \ar[d] & H^{2}(  D_{x_0,y_0} \setminus \{y_0\}  ) \ar[d]  \\
H^{2}( D_{x_0,y_0} \cap B(y_0;\epsilon) ,  D_{x_0,y_0} \cap B(y_0;\epsilon) \setminus \{y_0\} )  \ar[r] & H^{2}( D_{x_0,y_0} \cap B(y_0;\epsilon)  ) \ar[r] & H^{2}( D_{x_0,y_0} \cap B(y_0;\epsilon) \setminus \{y_0\} ) .
\end{tikzcd}
\end{equation*}
Recall that $D_{x_0,y_0}$ is diffeomorphic to $\mathbb{CP}^1$. By diagram chasing, it follows that $H^2({i_1}^*)$ and $H^2({i_2}^*)$ are isomorphisms. By the excision theorem, it follows that $H^2({i_3}^*)$ is an isomorphism. Therefore $\psi_1 |_{W''} \in \Hom^0_{\Sh(W'')} ( \mathbb{K}_{Z_1}|_{W''} , \mathbb{K}_{Z_2}[2] |_{W''} )$ is a morphism such that $ H^0(\psi_1 |_{W''}) \in \Ext^2(\mathbb{K}_{Z_1} |_{W''}, \mathbb{K}_{Z_2} |_{W''}) \cong \mathbb{K}$ is a generator. Since the local situation is the same as Lemma \ref{lem3.4}, it follows that $\mathrm{SS} (\Cocone(\psi_1 |_{W''})) \cap \dot{T}^*_{(x_0,y_0,\pi)}{W''} = \{((y_0,\xi) , (\pi, -|\xi|_{g^*})) \in \dot{T}^*_{y_0}{D_{x_0,y_0}} \times T^*_{\pi}\mathbb{R}  \mid (y_0, \xi) \in \dot{T}^*_{y_0}D_{x_0,y_0}  \}$.
\end{proof}

\begin{prop}
\label{prop4.8}
For $i \in \mathbb{Z}_{>0}$ even, there is an equality, 
\begin{equation*}
\mathrm{SS}(\Cocone(\psi_i)) \cap \dot{T}^*_{(x,x,i\pi)}(\mathbb{CP}^n \times \mathbb{CP}^n \times \mathbb{R}) = \Lambda_\phi \cap \dot{T}^*_{(x,x,i\pi)}(\mathbb{CP}^n \times \mathbb{CP}^n \times \mathbb{R}) .
\end{equation*}
\end{prop}
\begin{proof}
We only prove the case of $i=2$ since the proof for $i > 2$ is similar.
Fix a point $x_0 \in \mathbb{CP}^n$.
Set $ W :  =\{x_0\} \times \mathbb{CP}^n \times (2\pi-\epsilon, 2\pi+\epsilon)$ and $ W'  :  = \{x_0\} \times B(x_0;\epsilon) \times (2\pi-\epsilon, 2\pi+\epsilon)$.
Then it suffices to prove the equality $\mathrm{SS} (\Cocone(\psi_2 |_{W'})) \cap \dot{T}^*_{(x_0,x_0,2\pi)}{W'} = \{((x_0,\xi) , (2\pi, -|\xi|_{g^*})) \in \dot{T}^*_{x_0}\mathbb{CP}^n \times T^*_{2\pi}\mathbb{R}  \mid (x_0, \xi) \in \dot{T}^*_{x_0}\mathbb{CP}^n  \}$.
We have the following commuatative diagram:
\begin{equation*}
\begin{tikzcd}[column sep=tiny]
\Hom(\mathbb{K}_{Z_2},\mathbb{K}_{Z_3}) \ar[r] \ar[d] & \Hom(\mathbb{K}_{Z_2}, \mathbb{K}_{\mathbb{CP}^n \times \mathbb{CP}^n \times \mathbb{R}}) \ar[r] \ar[d] & \Hom(\mathbb{K}_{Z_2}, \mathbb{K}_{{Z_3}^c}) \ar[d] \ar[r, "+1"] & \   \\
\Hom(\mathbb{K}_{Z_2}|_W,\mathbb{K}_{Z_3}|_W) \ar[r] \ar[d] & \Hom(\mathbb{K}_{Z_2}|_W, \mathbb{K}_{\mathbb{CP}^n \times \mathbb{CP}^n \times \mathbb{R}}|_W) \ar[r] \ar[d] & \Hom(\mathbb{K}_{Z_2}|_W, \mathbb{K}_{{Z_3}^c}|_W) \ar[d] \ar[r, "+1"] & \   \\
\Hom(\mathbb{K}_{Z_2}|_{W'},\mathbb{K}_{Z_3}|_{W'}) \ar[r] &\Hom(\mathbb{K}_{Z_2}|_{W'}, \mathbb{K}_{\mathbb{CP}^n \times \mathbb{CP}^n \times \mathbb{R}}|_{W'}) \ar[r] & \Hom(\mathbb{K}_{Z_2}|_{W'}, \mathbb{K}_{{Z_3}^c}|_{W'}) \ar[r, "+1"] & \ .
\end{tikzcd} 
\end{equation*}
The vertical arrows are the restriction maps. By Proposition \ref{prop2.7}, this diagram can be seen as the following commutative diagram:
\begin{equation*}
\begin{tikzcd}[column sep=tiny]
C^*(\mathbb{CP}^n \times \mathbb{CP}^n, \mathbb{CP}^n \times \mathbb{CP}^n \setminus \Delta_{\mathbb{CP}^n} ) \ar[r] \ar[d,"{i_1}^*"] & C^*(\mathbb{CP}^n \times \mathbb{CP}^n) \ar[r] \ar[d] & C^*( \mathbb{CP}^n \times \mathbb{CP}^n \setminus \Delta_{\mathbb{CP}^n} ) \ar[d] \ar[r, "+1"] & \   \\
C^*( \mathbb{CP}^n , \mathbb{CP}^n \setminus \{x_0\} ) \ar[r] \ar[d,"{i_2}^*"] & C^*( \mathbb{CP}^n) \ar[r] \ar[d] & C^*( \mathbb{CP}^n \setminus \{x_0\} )  \ar[d] \ar[r, "+1"] & \   \\
C^*( B(x_0 ; \epsilon) , B( x_0; \epsilon) \setminus \{x_0\} ) \ar[r] & C^*( B( x_0; \epsilon) ) \ar[r] & C^*( B( x_0; \epsilon) \setminus \{x_0\} ) \ar[r, "+1"] & \ 
\end{tikzcd}
\end{equation*}
where $i_1 \colon  \mathbb{CP}^n \to \mathbb{CP}^n \times \mathbb{CP}^n$ is the continuous map defined by $i_1(y)=(x_0,y)$ and $i_2 \colon  B(x_0;\epsilon) \to \mathbb{CP}^n$ is the inclusion map. Taking its cohomology, we have the following commutative diagram:
\begin{equation*}
\begin{tikzcd}[column sep=tiny]
{H}^{2n}(\mathbb{CP}^n \times \mathbb{CP}^n , \mathbb{CP}^n \times \mathbb{CP}^n \setminus \Delta_{\mathbb{CP}^n} )  \ar[r] \ar[d,"H^{2n}({i_1}^*)"] & H^{2n}(\mathbb{CP}^n \times \mathbb{CP}^n ) \ar[r] \ar[d] & H^{2n}( \mathbb{CP}^n \times \mathbb{CP}^n \setminus \Delta_{\mathbb{CP}^n} ) \ar[d] \\
{H}^{2n}( \mathbb{CP}^n ,  \mathbb{CP}^n \setminus \{x_0\}) \ar[r] \ar[d,"H^{2n}({i_2}^*)"] & H^{2n}(  \mathbb{CP}^n ) \ar[r] \ar[d] & H^{2n}( \mathbb{CP}^n \setminus \{x_0\} ) \ar[d]  \\
{H}^{2n}( B(x_0 ; \epsilon ) ,  B( x_0; \epsilon ) \setminus \{x_0\} ) \ar[r] & H^{2n}( B(x_0 ; \epsilon)) \ar[r] & H^{2n}( B( x_0; \epsilon) \setminus \{x_0\} ).
\end{tikzcd}
\end{equation*}
By Lemma \ref{lem4.5}, it follows that $H^{2n}({i_1}^*)$ is an isomorphism. By the excision theorem, it follows that $H^{2n}({i_2}^*)$ is an isomorphism. Therefore $\psi_2 |_{W'} \in \Hom^{0}_{\Sh(W')} ( \mathbb{K}_{Z_2}|_{W'} , \mathbb{K}_{Z_3}[2n]|_{W'} ) $ is a morphism such that $ H^0(\psi_2|_{W'}) \in \Ext^{2n}(\mathbb{K}_{Z_2} |_{W'}, \mathbb{K}_{Z_3} |_{W'}) \cong \mathbb{K}$ is a generator. Since the local situation is the same as Lemma \ref{lem3.4}, it follows that $\mathrm{SS} (\Cocone(\psi_2 |_{W'})) \cap \dot{T}^*_{(x_0,x_0,2\pi)}{W'} = \{((x_0,\xi) , (2\pi, -|\xi|_{g^*})) \in \dot{T}^*_{x_0}\mathbb{CP}^n \times T^*_{2\pi}\mathbb{R}  \mid (x_0, \xi) \in \dot{T}^*_{x_0}\mathbb{CP}^n  \}$.
\end{proof}

\newpage

We define the sheaf $K_{+} \in \Sh(\mathbb{CP}^n \times \mathbb{CP}^n \times \mathbb{R})$ as follows. 

For $i \in \mathbb{Z}_{>0}$, we define the sheaf $K_i \in \Sh(\mathbb{CP}^n \times \mathbb{CP}^n \times \mathbb{R})$ by
\begin{equation*}
\begin{split}
K_1& :  =\Cocone(\mathbb{K}_{Z_1} \xrightarrow{\psi_1} \mathbb{K}_{Z_2}[2]), \\
K_2& :  =\Cocone(K_1 \xrightarrow{\tilde{\psi}_2} \mathbb{K}_{Z_3}[2n+1]), \\
K_3& :  =\Cocone(K_2 \xrightarrow{\tilde{\psi}_3} \mathbb{K}_{Z_4}[2n+2]), \\
K_4& :  =\Cocone(K_3 \xrightarrow{\tilde{\psi}_4} \mathbb{K}_{Z_5}[4n+1]), \\
&\dots \\
K_i& :  =\Cocone(K_{i-1} \xrightarrow{\tilde{\psi}_i} \mathbb{K}_{Z_{i+1}}[(i-1)n+2]) \ \text{(if $i$ odd)}, \\
K_i& :  =\Cocone(K_{i-1} \xrightarrow{\tilde{\psi}_i} \mathbb{K}_{Z_{i+1}}[in+1]) \ \text{(if $i$ even)},
\end{split}
\end{equation*}
where $\tilde{\psi}_i $ is the morphism induced by $\psi_i$.
Note that we have the following isomorphisms by induction.
\begin{enumerate}
\item [(i)] the case of $i=2$
\begin{equation*}
\begin{split}
&\Hom_{\Sh(\mathbb{CP}^n \times \mathbb{CP}^n \times \mathbb{R})}(K_1 , \mathbb{K}_{Z_3}[2n+1]) \\
&\cong \Cone(\Hom_{\Sh(\mathbb{CP}^n \times \mathbb{CP}^n \times \mathbb{R})}(\mathbb{K}_{Z_2}[2], \mathbb{K}_{Z_3}[2n+1]) \to \Hom_{\Sh(\mathbb{CP}^n \times \mathbb{CP}^n \times \mathbb{R})}(\mathbb{K}_{Z_1}, \mathbb{K}_{Z_3}[2n+1])) \\
&\cong \Cone( \Hom_{\Sh(\mathbb{CP}^n \times \mathbb{CP}^n \times \mathbb{R})}(\mathbb{K}_{Z_2}[2], \mathbb{K}_{Z_3}[2n+1])\to 0) \\ &\cong \Hom_{\Sh(\mathbb{CP}^n \times \mathbb{CP}^n \times \mathbb{R})}(\mathbb{K}_{Z_2}[2], \mathbb{K}_{Z_3}[2n+1])[1] \\
&\cong \Hom_{\Sh(\mathbb{CP}^n \times \mathbb{CP}^n \times \mathbb{R})}(\mathbb{K}_{Z_2}, \mathbb{K}_{Z_3}[2n]) .
\end{split}
\end{equation*}
\item [(ii)] the case of $i>2$ odd
\begin{equation*}
\begin{split}
&\Hom_{\Sh(\mathbb{CP}^n \times \mathbb{CP}^n \times \mathbb{R})}(K_{i-1} , \mathbb{K}_{Z_{i+1}}[(i-1)n+2]) \\
&\cong \Cone(\Hom_{\Sh(\mathbb{CP}^n \times \mathbb{CP}^n \times \mathbb{R})}(\mathbb{K}_{Z_i}[(i-1)n+1], \mathbb{K}_{Z_{i+1}}[(i-1)n+2]) \\
&\qquad \to \Hom_{\Sh(\mathbb{CP}^n \times \mathbb{CP}^n \times \mathbb{R})}(K_{i-2}, \mathbb{K}_{Z_{i+1}}[(i-1)n+2])) \\
&\cong \Cone( \Hom_{\Sh(\mathbb{CP}^n \times \mathbb{CP}^n \times \mathbb{R})}(\mathbb{K}_{Z_i}[(i-1)n+1], \mathbb{K}_{Z_{i+1}}[(i-1)n+2])\to 0) \\ 
&\cong \Hom_{\Sh(\mathbb{CP}^n \times \mathbb{CP}^n \times \mathbb{R})}(\mathbb{K}_{Z_i}[(i-1)n+1], \mathbb{K}_{Z_{i+1}}[(i-1)n+2])[1] \\
&\cong \Hom_{\Sh(\mathbb{CP}^n \times \mathbb{CP}^n \times \mathbb{R})}(\mathbb{K}_{Z_i}, \mathbb{K}_{Z_{i+1}}[2]) .
\end{split}
\end{equation*}
\item [(iii)] the case of $i>2$ even
\begin{equation*}
\begin{split}
&\Hom_{\Sh(\mathbb{CP}^n \times \mathbb{CP}^n \times \mathbb{R})}(K_{i-1} , \mathbb{K}_{Z_{i+1}}[in+1]) \\
&\cong \Cone(\Hom_{\Sh(\mathbb{CP}^n \times \mathbb{CP}^n \times \mathbb{R})}(\mathbb{K}_{Z_i}[(i-2)n+2], \mathbb{K}_{Z_{i+1}}[in+1]) \\
&\qquad \to \Hom_{\Sh(\mathbb{CP}^n \times \mathbb{CP}^n \times \mathbb{R})}(K_{i-2}, \mathbb{K}_{Z_{i+1}}[in+1])) \\
&\cong \Cone( \Hom_{\Sh(\mathbb{CP}^n \times \mathbb{CP}^n \times \mathbb{R})}(\mathbb{K}_{Z_i}[(i-2)n+2], \mathbb{K}_{Z_{i+1}}[in+1])\to 0) \\ 
&\cong \Hom_{\Sh(\mathbb{CP}^n \times \mathbb{CP}^n \times \mathbb{R})}(\mathbb{K}_{Z_i}[(i-2)n+2], \mathbb{K}_{Z_{i+1}}[in+1])[1] \\
&\cong \Hom_{\Sh(\mathbb{CP}^n \times \mathbb{CP}^n \times \mathbb{R})}(\mathbb{K}_{Z_i}, \mathbb{K}_{Z_{i+1}}[2n]) .
\end{split}
\end{equation*}
\end{enumerate}
Finally we obtain the sequence in $\Sh(\mathbb{CP}^n \times \mathbb{CP}^n \times \mathbb{R})$,
\begin{equation*}
K_1 \leftarrow K_2 \leftarrow K_3 \leftarrow K_4 \leftarrow \cdots .
\end{equation*}
We define $K_{+} :  = \mathrm{holim}_{i>0} {K_i}$. Note that $\mathrm{SS}(K_+)$ and $\Lambda_\phi$ coincide in $\dot{T}^*(\mathbb{CP}^n \times \mathbb{CP}^n \times \mathbb{R}_{>0})$.

In parallel to the construction of $K_{+}$, we define the sheaf $K_{-} \in \Sh(\mathbb{CP}^n \times \mathbb{CP}^n \times \mathbb{R}) $ as follows.

\begin{enumerate}

\item
For $i \in \mathbb{Z}_{<0}$ odd, we define the closed subset $Z_i \subset \mathbb{CP}^n \times \mathbb{CP}^n \times \mathbb{R}$ by
\begin{equation*}
    Z_{i} :  =\{(x,y,t)\in \mathbb{CP}^n \times \mathbb{CP}^n \times \mathbb{R} \mid \mathrm{dist}(x,y)\leq -t+(i+1)\pi \}.
\end{equation*}

For $i \in \mathbb{Z}_{<0}$ even, we define the closed subset $Z_i \subset \mathbb{CP}^n \times \mathbb{CP}^n \times \mathbb{R}$ by
\begin{equation*}
    Z_{i} :  =\{(x,y,t)\in \mathbb{CP}^n \times \mathbb{CP}^n \times \mathbb{R} \mid \mathrm{dist}(x,D_y)\leq -t+(i+1)\pi \}.
\end{equation*}

\item
For $i \in \mathbb{Z}_{<0}$ odd, we choose a morphism $\psi_i \in \Hom^0_{\Sh(\mathbb{CP}^n \times \mathbb{CP}^n \times \mathbb{R})} (\mathbb{K}_{Z_{i-1}} , \mathbb{K}_{Z_i}[2]) $ such that $H^0(\psi_i) \in \Ext^2(\mathbb{K}_{Z_{i-1}}, \mathbb{K}_{Z_i}) \cong \mathbb{K}$ is a generator. For $i \in \mathbb{Z}_{<0}$ even, we choose a morphism $\psi_i \in \Hom^0_{\Sh(\mathbb{CP}^n \times \mathbb{CP}^n \times \mathbb{R})} ( \mathbb{K}_{Z_{i-1}} , \mathbb{K}_{Z_i}[2n]) $ such that $ H^0(\psi_i) \in \Ext^{2n}(\mathbb{K}_{Z_{i-1}}, \mathbb{K}_{Z_{i}} ) \cong \mathbb{K}$ is a generator.

\item
We obtain the sequence in $ \Sh(\mathbb{CP}^n \times \mathbb{CP}^n \times \mathbb{R})$,
\begin{equation*}
\cdots \xrightarrow{\psi_{-4}} \mathbb{K}_{Z_{-4}}[-2n-4] \xrightarrow{\psi_{-3}} \mathbb{K}_{Z_{-3}}[-2n-2] \xrightarrow{\psi_{-2}} \mathbb{K}_{Z_{-2}}[-2] \xrightarrow{\psi_{-1}} \mathbb{K}_{Z_{-1}} .
\end{equation*}

\item
For $i \in \mathbb{Z}_{<0}$, we define the sheaf $K_i \in \Sh(\mathbb{CP}^n \times \mathbb{CP}^n \times \mathbb{R})$ by
\begin{equation*}
\begin{split}
K_{-1}& :  =\Cone(\mathbb{K}_{Z_{-2}}[-2] \xrightarrow{\psi_{-1}} \mathbb{K}_{Z_{-1}}),\\
K_{-2}& :  =\Cone(\mathbb{K}_{Z_{-3}}[-2n-1] \xrightarrow{\tilde{\psi}_{-2}} K_{-1}), \\
K_{-3}& :  =\Cone( \mathbb{K}_{Z_{-4}}[-2n-2] \xrightarrow{\tilde{\psi}_{-3}} K_{-2}), \\
K_{-4}& :  =\Cone(\mathbb{K}_{Z_{-5}}[-4n-1] \xrightarrow{\tilde{\psi}_{-4}} K_{-3}), \\
\dots \\
K_i& :  =\Cone(\mathbb{K}_{Z_{i-1}}[(i+1)n-2] \xrightarrow{\tilde{\psi}_i} K_{{i+1}}) \ \text{(if $i$ odd)}, \\
K_i& :  =\Cone(\mathbb{K}_{Z_{i-1}}[in-1] \xrightarrow{\tilde{\psi}_i} K_{{i+1}}) \ \text{(if $i$ even)},
\end{split}
\end{equation*}
where $\tilde{\psi}_i$ is the morphism induced by $\psi_i$.

\item
We obtain the sequence in $\Sh(\mathbb{CP}^n \times \mathbb{CP}^n \times \mathbb{R})$,
\begin{equation*}
\cdots \leftarrow K_{-4} \leftarrow K_{-3} \leftarrow K_{-2} \leftarrow K_{-1} .
\end{equation*}

\item
We define $K_{-} :  = \mathrm{hocolim}_{i<0}K_i$. Note that $\mathrm{SS}(K_-)$ and $\Lambda_\phi$ coincide in $\dot{T}^*(\mathbb{CP}^n \times \mathbb{CP}^n \times \mathbb{R}_{<0})$.

\end{enumerate}

\begin{lem}
\label{lem4.9}
The object $ \Hom_{\Sh(\mathbb{CP}^n \times \mathbb{CP} \times \mathbb{R})}( K_{-}, K_{+}) $ is isomorphic to $ C^*(\mathbb{CP}^n) \otimes C^*( \mathbb{R}^{2n+1} , S^{2n-1})[-1] $ in $\Mod(\mathbb{K})$.
\end{lem}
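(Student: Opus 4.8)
The plan is to run the proof of Lemma~\ref{lem3.7} essentially verbatim, with the real dimension $n$ of $S^n$ replaced throughout by the real dimension $2n$ of $\mathbb{CP}^n$. First I would unwind $K_{-}=\mathrm{hocolim}_{i<0}K_i$ and $K_{+}=\mathrm{holim}_{j>0}K_j$ and use that $\Hom$ sends a homotopy colimit in the first variable and a homotopy limit in the second to homotopy limits, so that $\Hom(K_{-},K_{+})\cong\mathrm{holim}_{i<0}\mathrm{holim}_{j>0}\Hom(K_i,K_j)$.

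The engine of the collapse is that $\Hom(\mathbb{K}_{Z_i},\mathbb{K}_{Z_j})\cong0$ whenever $i<0<j$ and $(i,j)\neq(-1,1)$: for such indices the supports $\overline{Z_i}\subseteq\{t\le(i+1)\pi\}$ and $\overline{Z_j}\subseteq\{t\ge(j-1)\pi\}$ are disjoint. Combined with $\mathrm{supp}(K_i)\subseteq\{t\le0\}$ for $i<0$ and $\mathrm{supp}(K_j)\subseteq\{t\ge0\}$ for $j>0$, applying $\Hom$ to the distinguished triangles defining the $K_i$ shows that $\Hom(K_i,K_j)\cong\Hom(K_{-1},K_1)$ for all $i<0<j$, so the double homotopy limit is just $\Hom(K_{-1},K_1)$; a further application of the same triangles (using $\Hom(\mathbb{K}_{Z_{-1}},\mathbb{K}_{Z_2})\cong\Hom(\mathbb{K}_{Z_{-2}},\mathbb{K}_{Z_1})\cong0$) gives $\Hom(K_{-1},K_1)\cong\Hom(\mathbb{K}_{Z_{-1}},\mathbb{K}_{Z_1})$. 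Everything is thus reduced to computing $\Hom(\mathbb{K}_{Z_{-1}},\mathbb{K}_{Z_1})$.

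Since $\mathcal{H}om(\mathbb{K}_{Z_{-1}},\mathbb{K}_{Z_1})$ is supported on $Z_{-1}\cap\overline{Z_1}=\Delta_{\mathbb{CP}^n}\times\{0\}$, it suffices to work over a tubular neighbourhood of $\Delta_{\mathbb{CP}^n}$ in $\mathbb{CP}^n\times\mathbb{CP}^n$, which is diffeomorphic to a neighbourhood of the zero section of $T\mathbb{CP}^n$. Under this identification $\mathrm{dist}(x,y)$ becomes the fibre norm $|v|_g$ (via the exponential map, as in Lemma~\ref{lem4.1}), so $Z_{-1}$ and $Z_1$ become the radial cones $\{|v|_g\le-t\}$ and $\{|v|_g<t\}$. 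Pushing $\mathcal{H}om$ forward along $T\mathbb{CP}^n\times\mathbb{R}\to\mathbb{CP}^n$ produces a complex with locally constant cohomology whose stalk is $\Hom_{\Sh(\mathbb{R}^{2n}\times\mathbb{R})}(\mathbb{K}_A,\mathbb{K}_B)$, where $A=\{(x,t)\in\mathbb{R}^{2n}\times\mathbb{R}\mid|x|\le-t\}$ and $B=\{(x,t)\in\mathbb{R}^{2n}\times\mathbb{R}\mid|x|<t\}$; this cohomology is concentrated in a single degree, and as $\mathbb{CP}^n$ is simply connected the complex is globally constant. Hence the K\"unneth formula gives
\begin{equation*}
\Hom(\mathbb{K}_{Z_{-1}},\mathbb{K}_{Z_1})\cong C^*(\mathbb{CP}^n)\otimes\Hom_{\Sh(\mathbb{R}^{2n}\times\mathbb{R})}(\mathbb{K}_A,\mathbb{K}_B).
\end{equation*}
It then remains to show $\Hom_{\Sh(\mathbb{R}^{2n}\times\mathbb{R})}(\mathbb{K}_A,\mathbb{K}_B)\cong C^*(\mathbb{R}^{2n+1},S^{2n-1})[-1]$, which is the computation at the end of the proof of Lemma~\ref{lem3.7} with $n$ replaced by $2n$: apply Proposition~\ref{prop2.5} to $\mathbb{K}_A$, note $\Hom(\mathbb{K}_{\mathbb{R}^{2n+1}},\mathbb{K}_B)\cong0$ because $\mathbb{R}^{2n+1}$ and $\mathbb{R}^{2n+1}\setminus B$ are both contractible, so the Hom equals $\Hom(\mathbb{K}_{\mathbb{R}^{2n+1}\setminus A},\mathbb{K}_B)[-1]$; then apply Proposition~\ref{prop2.5} to $\mathbb{K}_B$, Proposition~\ref{prop2.6}, Proposition~\ref{prop2.7}, and the excision theorem, using the deformation retraction $(\mathbb{R}^{2n+1}\setminus A)\cap(\mathbb{R}^{2n+1}\setminus B)\simeq S^{2n-1}$. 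Combining the last display with this identification proves the lemma.

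I expect the only genuinely delicate point to be the localization and coordinate change in the third paragraph: choosing the tubular neighbourhood of $\Delta_{\mathbb{CP}^n}$ so that $\mathrm{dist}$ becomes the fibre norm of $T\mathbb{CP}^n$, and verifying that the pushforward of $\mathcal{H}om$ along $T\mathbb{CP}^n\times\mathbb{R}\to\mathbb{CP}^n$ is constant, so that the K\"unneth factorization is unaffected by the nontriviality of $T\mathbb{CP}^n$. All remaining steps are formal and identical to the $S^n$ case of Lemma~\ref{lem3.7}.
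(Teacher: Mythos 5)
Your proposal is correct and matches the paper's intent exactly: the paper's own proof of Lemma~\ref{lem4.9} is the single sentence ``The proof is the same as Lemma~\ref{lem3.7},'' and you have faithfully reconstructed that argument with $n$ replaced by $2n$ and $S^n$ by $\mathbb{CP}^n$ throughout (double holim reduction, support vanishing, collapse to $\Hom(\mathbb{K}_{Z_{-1}},\mathbb{K}_{Z_1})$, K\"unneth, and the local cone computation giving $C^*(\mathbb{R}^{2n+1},S^{2n-1})[-1]$). If anything you are more careful than the paper at the one genuinely delicate step, since the paper's ``coordinate transformation'' to $S^n\times\mathbb{R}^n\times\mathbb{R}$ implicitly trivializes the tangent bundle, whereas your argument via a locally constant pushforward over the simply connected base is the cleaner justification and applies verbatim to $\mathbb{CP}^n$.
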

\begin{proof}
The proof is the same as Lemma \ref{lem3.7}.
\end{proof}

We choose a morphism $\psi_0 \in \Hom^0_{\Sh(\mathbb{CP}^n \times \mathbb{CP}^n \times \mathbb{R}^n)} ( \mathbb{K}_{-} , \mathbb{K}_{+}[2n+1]) $ such that $H^0(\psi_0) \in \Ext^{2n+1}(K_{-}, K_{+}) \cong \mathbb{K}$ is a generator. Note that the isomorphism $\Ext^{2n+1}(K_{-}, K_{+}) \cong \mathbb{K}$ follows from Lemma \ref{lem4.9}. 

\begin{prop}
\label{prop4.10}
There is an equality,
\begin{equation*}
\mathrm{SS} (\Cocone(\psi_0)) \cap \dot{T}^*_{(x,x,0)} (\mathbb{CP}^n \times \mathbb{CP}^n \times \mathbb{R} ) = \Lambda_\phi \cap \dot{T}^*_{(x,x,0)}(\mathbb{CP}^n \times \mathbb{CP}^n \times \mathbb{R}).
\end{equation*}
\end{prop}
\begin{proof}
It follows from Lemma \ref{lem3.8}.
\end{proof}

\begin{thm}
\label{thm4.11}
The sheaf $K:=\Cocone(\psi_0)$ is the GKS kernel of $\phi$. 
\end{thm}
\begin{proof}
We have to prove that $\mathrm{SS}(K) \setminus 0_{\mathbb{CP}^n \times \mathbb{CP}^n \times \mathbb{R}} = \Lambda_\phi$ and $K|_{ \{ t=0 \} } \cong \mathbb{K}_{\Delta_{\mathbb{CP}^n}}$. The first follows from Proposition \ref{prop4.6}, \ref{prop4.7}, \ref{prop4.8}, and \ref{prop4.10}. The second follows from the following chain of isomorphisms, 
\begin{equation*}
\begin{split}
K|_{\{t=0\}} 
&\cong \Cocone( K_{-}|_{\{t=0\}} \to K_{+}[2n+1]|_{\{t=0\}} ) \\
&\cong \Cocone( K_{-}|_{\{t=0\}} \to 0) \\
&\cong K_{-}|_{\{t=0\}} \\
&\cong K_{-1}|_{\{t=0\}} \\
&\cong \Cone(\mathbb{K}_{Z_{-2}}[-2]|_{\{t=0\}} \to \mathbb{K}_{Z_{-1}}|_{\{t=0\}}) \\
&\cong \Cone(0 \to \mathbb{K}_{Z_{-1}}|_{\{t=0\}}) \\
&\cong \mathbb{K}_{Z_{-1}}|_{\{t=0\}} \\
&\cong \mathbb{K}_{\Delta_{\mathbb{CP}^n}} .
\end{split}
\end{equation*}
\end{proof}

As an application of Theorem \ref{thm4.11}, we solve a conjecture posed in \cite{KuwagakiSaito} for the case of $M=\mathbb{CP}^n$.

\begin{conj}[\cite{KuwagakiSaito}]
\label{conj4.12}
Let $M$ be a compact complex manifold and $F$ be a $\mathbb{C}$-constructible sheaf on $M$. Let $g$ be a Riemannian metric on $M$, $\phi_t\colon \dot{T}^*M \to \dot{T}^*M$ be the time-$t$ geodesic flow induced by $g$, and $\Phi_t\colon \Sh(M) \to \Sh(M)$ be the GKS functor induced by $\phi_t$.
Then there exists an inductive system $\{F_i\}_{i \in I}$ which consists of $\mathbb{C}$-constructible sheaves on $M$ such that $\mathrm{hocolim}_{i \in I} F_i \cong \mathrm{hocolim}_{t \in \mathbb{R}} \Phi_t(F)$.
\end{conj}

\begin{cor}
Conjecture \ref{conj4.12} holds when $M=\mathbb{CP}^n$.
\end{cor}
\begin{proof}
Note that $\Phi_t \simeq {K_\phi}|_{\mathbb{CP}^n \times \mathbb{CP}^n \times \{t\} } \circ-$ where $\circ$ is the composition (\cite{GKS}).
For any $i \in \mathbb{Z}$, the object $K_\phi|_{\{t=2i\pi\}} \in \Sh(\mathbb{CP}^n \times \mathbb{CP}^n)$ is $\mathbb{C}$-constructible because for any $j \in \mathbb{Z}$, the subset $Z_j|_{\{t=2i\pi\}} \subset \mathbb{CP}^n  \times \mathbb{CP}^n$ is either $ \mathbb{CP}^n \times \mathbb{CP}^n $, $\Delta_{\mathbb{CP}^n}$, $ \mathbb{CP}^n \times \mathbb{CP}^n \setminus \Delta_{\mathbb{CP}^n} $, or empty.
Consider the cofinal family $ \{ \Phi_{2i\pi}(F) \}_{ i \in \mathbb{Z}} \subset \{ \Phi_{t}(F) \}_{ t \in \mathbb{R}} $.
Then each object $ \Phi_{2 i \pi }(F) $ is $\mathbb{C}$-constructible because the composition of $\mathbb{C}$-constructible sheaves is again $\mathbb{C}$-constructible.
\end{proof}

\bibliographystyle{alpha}
\bibliography{bibs.bib}

\end{document}